\DeclareSymbolFontAlphabet{\mathbb}{AMSb}
\DeclareSymbolFontAlphabet{\mathbbl}{bbold}
\newcommand{\Prism}{{\mathlarger{\mathbbl{\Delta}}}}
\setlist[enumerate]{itemsep=2pt,parsep=2pt,before={\parskip=2pt}}
\newcommand{\cosimp}[3]{\xymatrix@1{#1 \ar@<.4ex>[r] \ar@<-.4ex>[r] & {\ }#2 \ar@<0.8ex>[r] \ar[r] \ar@<-.8ex>[r] & {\ } #3 \ar@<1.2ex>[r] \ar@<.4ex>[r] \ar@<-.4ex>[r] \ar@<-1.2ex>[r] & \cdots }}
\newcommand{\adjunction}[4]{\xymatrix@1{#1{\ } \ar@<0.3ex>[r]^{ {\scriptstyle #2}} & {\ } #3 \ar@<0.3ex>[l]^{ {\scriptstyle #4}}}}
\begin{document}

\newtheorem{theorem}{Theorem}[section]
\newtheorem*{theorem*}{Theorem}
\newtheorem*{definition*}{Definition}
\newtheorem{proposition}[theorem]{Proposition}
\newtheorem{lemma}[theorem]{Lemma}
\newtheorem{corollary}[theorem]{Corollary}

\theoremstyle{definition}
\newtheorem{definition}[theorem]{Definition}
\newtheorem{question}[theorem]{Question}
\newtheorem{remark}[theorem]{Remark}
\newtheorem{warning}[theorem]{Warning}
\newtheorem{example}[theorem]{Example}
\newtheorem{notation}[theorem]{Notation}
\newtheorem{convention}[theorem]{Convention}
\newtheorem{construction}[theorem]{Construction}
\newtheorem{claim}[theorem]{Claim}
\newtheorem{assumption}[theorem]{Assumption}

\crefname{assumption}{assumption}{assumptions}
\crefname{construction}{construction}{constructions}

\newcommand{\qc}{q-\mathrm{\crys}}

\newcommand{\Shv}{\mathrm{Shv}}
\newcommand{\et}{{\acute{e}t}}
\newcommand{\crys}{\mathrm{crys}}
\newcommand{\dR}{\mathrm{dR}}
\renewcommand{\inf}{\mathrm{inf}}
\newcommand{\Hom}{\mathrm{Hom}}
\newcommand{\Sch}{\mathrm{Sch}}
\newcommand{\Spf}{\mathrm{Spf}}
\newcommand{\Spa}{\mathrm{Spa}}
\newcommand{\Spec}{\mathrm{Spec}}
\newcommand{\perf}{\mathrm{perf}}
\newcommand{\qsyn}{\mathrm{qsyn}}
\newcommand{\perfd}{\mathrm{perfd}}
\newcommand{\arc}{{\rm arc}}
\newcommand{\conj}{\mathrm{conj}}
\newcommand{\rad}{\mathrm{rad}}

\setcounter{tocdepth}{1}

\title{Prismatic $F$-crystals and crystalline Galois representations}
\author{Bhargav Bhatt}
\author{Peter Scholze}
\begin{abstract}
Let $K$ be a complete discretely valued field of mixed characteristic $(0,p)$ with perfect residue field. We prove that the category of prismatic $F$-crystals on $\mathcal O_K$ is equivalent to the category of lattices in crystalline $G_K$-representations. 
\end{abstract}

\maketitle
\setcounter{tocdepth}{1}
\tableofcontents


\section{Introduction}

Let $K$ be a complete discretely valued field of mixed characteristic $(0,p)$ with ring of integers $\mathcal{O}_K$, perfect residue field $k$,  completed algebraic closure $C$, and absolute Galois group $G_K$.  

\subsection{The main  theorem}

A fundamental discovery of Fontaine \cite{FontaineBigRingAnnals} in the study of finite dimensional $\mathbf{Q}_p$-representations of $G_K$ is the property of being {\em crystalline}; this notion is an (extremely successful) attempt at  capturing the property of ``having good reduction'' for such representations, analogous to the property of ``being unramified'' in the $\ell$-adic case\footnote{While the literal definition of unramifiedness certainly makes sense for $p$-adic representations of $G_K$, it does not capture the desired phenomena, e.g., the $p$-adic cyclotomic character and its nonzero powers are all infinitely ramified, whence, for $X/K$ smooth projective and $i > 0$, any nonzero $H^i(X_C, \mathbf{Q}_p)$ is also infinitely ramified.}. For instance, to each crystalline $G_K$-representation $V$, Fontaine has attached an $F$-isocrystal $D_{\crys}(V)$ over $k$ of the same rank as $V$, thus providing a notion of ``special fibre'' for such representations, thereby solving Grothendieck's problem of the mysterious functor. The motivating example here comes from\footnote{The assertions in this example were conjectured by Fontaine in \cite{FontaineBigRingAnnals} as the ``crystalline comparison conjecture'', and were proven by Tsuji \cite{TsujiCrys}. In fact, his work built on the work of several other people, and many different proofs of the crystalline comparison conjecture have been given since then as well; we refer to \cite[\S 1.1]{BMS1} for more on the history of this conjecture.} algebraic geometry: given a proper smooth algebraic variety $X/K$ with good reduction determined by  a proper smooth scheme $\mathcal{X}/\mathcal{O}_K$ extending $X/K$, each $G_K$-representation $H^i(X_C,\mathbf{Q}_p)$ is crystalline and $D_{\crys}(H^i(X_C,\mathbf{Q}_p)) \simeq H^i_{\crys}(\mathcal{X}_k)_{\mathbf{Q}_p}$. Moreover, for abelian varieties, one has a converse, giving a $p$-adic Neron-Ogg-Shafarevich criterion \cite{ColemanIovitaNeronOgg,MokraneNeronOgg}. 

In a different direction, to any $p$-adic formal scheme $Y$, we functorially attached in \cite{BhattScholzePrisms} a ringed site $(Y_\Prism,\mathcal{O}_\Prism)$ called the (absolute) prismatic site of $Y$ (Definition~\ref{AbsPrismSite}); the sheaf $\mathcal{O}_\Prism$ comes equipped with a Frobenius lift $\varphi$ as well as an invertible ideal sheaf $\mathcal{I}_\Prism \subset \mathcal{O}_\Prism$.  The original motivation for the construction of the prismatic site was the resulting prismatic cohomology $R\Gamma(Y_\Prism,\mathcal{O}_\Prism)  \ \rotatebox[origin=c]{-270}{$\circlearrowright$} \ \varphi$, which yields a mechanism to interpolate between different $p$-adic cohomology theories attached to $Y$ (such as the $p$-adic \'etale cohomology of the generic fibre or the crystalline cohomology of the special fibre). Turning attention now towards coefficients, one can study the following category of modules (whose definition is inspired by the structures seen on relative prismatic cohomology via the basic theorems of \cite{BhattScholzePrisms}, themselves inspired by the previous works \cite{BMS1,BMS2}):

\begin{definition}[Prismatic $F$-crystals, Definition~\ref{PrismaticFCrysDef}]
\label{PrismaticFCrysIntro}
A {\em prismatic $F$-crystal (of vector bundles) on $Y$} consists of a pair $(\mathcal{E},\varphi_{\mathcal{E}})$, where $\mathcal{E}$ is a vector bundle on the ringed site $(Y_\Prism, \mathcal{O}_\Prism)$ and $\varphi_{\mathcal{E}}$ is an isomorphism $(\varphi^* \mathcal{E}) [1/\mathcal{I}_\Prism] \simeq \mathcal{E}[1/\mathcal{I}_\Prism]$. We write $\mathrm{Vect}^{\varphi}(Y_\Prism, \mathcal{O}_\Prism)$ for the category of such $F$-crystals.
\end{definition}

Any prismatic $F$-crystal $(\mathcal{E},\varphi_{\mathcal{E}})$ has an \'etale realization $T(\mathcal{E},\varphi_{\mathcal{E}}) \in \mathrm{Loc}_{\mathbf{Z}_p}(Y_\eta)$ as a $\mathbf{Z}_p$-local system on the generic fibre $Y_\eta \subset Y$ (Construction~\ref{EtaleRealize}), as well as a crystalline realization as an $F$-crystal $D_{\crys}(\mathcal{E},\varphi_{\mathcal{E}})$ on the special fibre $Y_{p=0} \subset Y$ (Construction~\ref{CrysdRRealize}); both realizations have the same rank as $\mathcal{E}$. The picture here is that the prescence of a prismatic $F$-crystal on $Y$ lifting a given $\mathbf{Z}_p$-local system on $Y_\eta$ can be regarded as a candidate definition of (and in fact a witness for) ``a good reduction'' of the local system on $Y_\eta$. The key example again comes from geometry (Example~\ref{ex:GM}).

Specializing now to $Y=\mathrm{Spf}(\mathcal{O}_K)$, we obtain two candidate notions of ``good reduction'' for a local system on $Y_\eta$ (or equivalently for $p$-adic $G_K$-representations): one via Fontaine's theory of crystalline $G_K$-representations, and the other via the notion of prismatic $F$-crystals on $\mathrm{Spf}(\mathcal{O}_K)$. The main theorem of this paper is that these perspectives are equivalent:

\begin{theorem}[Theorem~\ref{MainThm}]
\label{MainThmIntro}
The \'etale realization functor gives an equivalence 
\begin{equation}
\label{equivintro}
 T_{\mathcal{O}_K}:\mathrm{Vect}^{\varphi}(\mathrm{Spf}(\mathcal{O}_K)_\Prism, \mathcal{O}_\Prism) \simeq \mathrm{Rep}^{\crys}_{\mathbf{Z}_p}(G_K),
 \end{equation}
where the target denotes the category of finite free $\mathbf{Z}_p$-modules $T$ equipped with a continuous $G_K$-action such that $T[1/p]$ is a crystalline representation.
\end{theorem}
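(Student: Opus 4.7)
The strategy is to factor the étale realization through Breuil--Kisin modules, using Kisin's classical classification of crystalline Galois representations as the bridge. Fix a uniformizer $\pi\in\mathcal{O}_K$ with Eisenstein polynomial $E(u)\in W(k)[u]$, and set $\mathfrak{S}=W(k)[\![u]\!]$; then the Breuil--Kisin pair $(\mathfrak{S},(E))$, together with the identification $\mathfrak{S}/E\simeq\mathcal{O}_K$ via $u\mapsto\pi$, is a distinguished object of $\mathrm{Spf}(\mathcal{O}_K)_\Prism$. Evaluating a prismatic $F$-crystal $\mathcal{E}$ at this prism produces a Breuil--Kisin module $\mathfrak{M}(\mathcal{E})$, and Kisin's fully faithful functor from Breuil--Kisin modules to $G_{K_\infty}$-stable $\mathbf{Z}_p$-lattices in crystalline $G_K$-representations should recover (the restriction to $G_{K_\infty}$ of) the étale realization $T_{\mathcal{O}_K}(\mathcal{E})$. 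The goal is to assemble these functors into the claimed equivalence.

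For full faithfulness of $T_{\mathcal{O}_K}$ together with crystallinity of the image, the plan is threefold. First, I would show that evaluation at $(\mathfrak{S},(E))$ is fully faithful on prismatic $F$-crystals by a descent argument using that the Breuil--Kisin prism and its self-products in $\mathrm{Spf}(\mathcal{O}_K)_\Prism$ suffice to compute sheaf-Homs of crystals; combined with Kisin's full faithfulness, this yields faithfulness of $T_{\mathcal{O}_K}$. Second, I would upgrade the $G_{K_\infty}$-action on $T(\mathcal{E})$ to a $G_K$-action by pulling the crystal back to the perfectoid prism $(A_\inf,\ker\theta)$ with $A_\inf=W(\mathcal{O}_{C^\flat})$, whose natural $G_K$-action transports via the crystal structure. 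Third, I would verify that $T(\mathcal{E})[1/p]$ is crystalline by comparing with the crystalline realization $D_\crys(\mathcal{E})$, an $F$-crystal on $\mathrm{Spec}(k)$ of rank equal to $T(\mathcal{E})$, via a natural comparison morphism $B_\crys\otimes_{W(k)} D_\crys(\mathcal{E})\to B_\crys\otimes_{\mathbf{Z}_p} T(\mathcal{E})$ which must be shown to be an isomorphism of $G_K$-modules.

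The principal obstacle will be essential surjectivity. Given a lattice $T$ in a crystalline representation $V$, Kisin's theorem immediately supplies the Breuil--Kisin module $\mathfrak{M}$ that should be the value of the desired crystal on $(\mathfrak{S},(E))$; the hard part is to extend this coherently to a crystal on the entire site $\mathrm{Spf}(\mathcal{O}_K)_\Prism$, i.e., to specify compatible values $\mathcal{E}(A,I)$ for all prisms and verify the resulting descent/cocycle conditions. The natural approach is to use the crystalline input along two complementary families of prisms: on perfectoid prisms, the $G_K$-equivariant $A_\inf$-module built from $T$ provides the value; on crystalline-type prisms (those with $I=(p)$), the $F$-isocrystal $D_\crys(V)$ does. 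Gluing these will hinge on Kisin's comparison of $\mathfrak{M}/u\mathfrak{M}[1/p]$ with $D_\crys(V)$, and verifying the cocycle relations over arbitrary pairs of prisms is the main technical content of the proof — this is precisely where the crystalline hypothesis on $V$ (as opposed to mere $G_{K_\infty}$-compatibility) must be used in an essential way.
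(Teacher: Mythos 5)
Your outline correctly identifies the main structural points: factoring the \'etale realization through the Breuil--Kisin prism, Kisin's full faithfulness, the crystallinity of the image via comparison with $D_{\crys}$, and the centrality of the descent/cocycle problem for essential surjectivity. The crystallinity argument you sketch matches the paper's Proposition~\ref{FCrystoCrysGal} (Dwork's trick providing the $G_K$-equivariant isomorphism $T(\mathcal{E})\otimes B_{\crys}\simeq \mathcal{E}(W(k))\otimes B_{\crys}$). Your full faithfulness route via Breuil--Kisin prisms and Theorem~\ref{EtaleRealizeBK} is essentially Remark~\ref{FFviaBK}, which the paper records as an alternative; the paper's primary full faithfulness argument is instead quasi-syntomic descent along $\mathrm{Spf}(\mathcal{O}_C)\to\mathrm{Spf}(\mathcal{O}_K)$ combined with the (easy direction of) Fargues' classification of $F$-crystals over $A_{\inf}$, Theorem~\ref{FarguesShtukaClassify}. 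Both routes are valid.

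The genuine gap is in essential surjectivity, which you correctly flag as the main difficulty but whose resolution you do not identify. Your proposal to glue along ``two complementary families of prisms'' (perfectoid and crystalline-type) and then ``verify cocycle relations over arbitrary pairs of prisms'' does not lead to a proof as stated: the full prismatic site is not stratified in a way that permits such a gluing, and the Breuil--Kisin prism itself is neither perfectoid nor crystalline in the relevant sense, so the actual descent datum one must construct lives over rings like $\Prism_{\mathcal{O}_C\widehat\otimes_{\mathcal{O}_K}\mathcal{O}_C}$ which are far from explicit. The paper's argument instead starts from the filtered $\varphi$-module $(D,\varphi_D,\mathrm{Fil}^*)$, builds by Beauville--Laszlo gluing an $F$-crystal over the rational localization $\Prism_\bullet\langle I/p\rangle[1/p]$ on $X_{qsyn}$ (Construction~\ref{FiltPhiCrys}), uses Kedlaya's slope filtration/Berger's observation to get an integral model $\mathcal{M}'$ over $\Prism_{\mathcal{O}_C}$ from weak admissibility, and then confronts the real obstacle: showing that the descent isomorphism over $\Prism_{\mathcal{O}_C\widehat\otimes\mathcal{O}_C}\langle I/p\rangle[1/p]$ extends (is ``bounded'') to $\Prism_{\mathcal{O}_C\widehat\otimes\mathcal{O}_C}[1/p]$. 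This is Proposition~\ref{ACDescent}, and it is not a formal cocycle verification but an analytic continuation statement whose proof depends essentially on the Beilinson fibre sequence of \cite{AMMNBFS} applied to $p$-adic Tate twists (Proposition~\ref{TateTwistRational}). Without this boundedness input, your gluing strategy has no way to produce descent data over the full prismatic base ring; the paper notes in the acknowledgements that this single step was the bottleneck for several years. Any successful proof must engage with this, and your proposal does not.
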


There were multiple motivations for pursuing Theorem~\ref{MainThmIntro}. First, the data of a prismatic $F$-crystal on $\mathrm{Spf}(\mathcal{O}_K)$ is a rather elaborate piece of structure, so Theorem~\ref{MainThmIntro} can be regarded  as an elucidation of some new properties of crystalline $G_K$-representations. Secondly, the property of being crystalline for a $G_K$-representation is essentially a rational concept that carries little meaning, e.g., for $\mathbf{Z}/p^n$-representations. On the other hand, there is a perfectly sensible notion of a prismatic $F$-crystal over $\mathrm{Spf}(\mathcal{O}_K)$ with $\mathcal{O}_\Prism/p^n$-coefficients; accordingly, we expect Theorem~\ref{MainThmIntro} to be the first step\footnote{Actually, the notion of prismatic $F$-crystals over $\mathcal{O}_K$, while adequate for  the equivalence of Theorem~\ref{MainThmIntro}, is not quite the correct notion in more general settings, e.g., with $\mathbf{Z}/p^n$-coefficients or in the derived category. A basic problem is that a prismatic $F$-crystal $(\mathcal{E},\varphi_{\mathcal{E}})$ does not come equipped with bounds on the zeroes/poles of the isomorphism $\varphi^* \mathcal{E} [1/\mathcal{I}_\Prism] \simeq \mathcal{E}[1/\mathcal{I}_\Prism]$ along the locus $\mathcal{I}_\Prism = 0$. Instead, we expect that the correct objects to consider are perfect (or pseudocoherent) complexes over Drinfeld's stack $\Sigma''$ (or, rather, its variant over $\mathcal{O}_K$) from \cite{DrinfeldPrismatization}. Those complexes, whose definition draws inspiration from the Fontaine-Jannsen theory of $F$-gauges \cite{FontaineJannsen}, can be roughly regarded as prismatic $F$-crystals in perfect complexes where one has bounded the zeroes/poles of the Frobenius by also keeping track of the Nygaard filtration.} towards a well-behaved theory of torsion crystalline $G_K$-representations.

We prove Theorem~\ref{MainThm} by quasi-syntomic descent along the map $Y=\mathrm{Spf}(\mathcal{O}_C) \to X=\mathrm{Spf}(\mathcal{O}_K)$.  The essential ingredient (which is the subject of \S \ref{sec:EssSurj}) is the construction of a prismatic $F$-crystal $\mathfrak{M}(L)$ over $X$ lifting a given crystalline $\mathbf{Z}_p$-representation $L$ under the functor $T_{\mathcal{O}_K}$. This construction involves two steps. First, we construct $\mathfrak{N} := \mathfrak{M}(L)|_Y$ by mimicking arguments from \cite{Kisin}, thus relying crucially on Kedlaya's slope filtrations theorem \cite{KedlayaMonodromy} through an observation of Berger \cite{BergerModuleWach}. Having constructed $\mathfrak{N}$, we construct a descent isomorphism $p_1^* \mathfrak{N} \simeq p_2^* \mathfrak{N}$ over $Y \times Y$ satisfying the cocycle condition; this descent isomorphism is the primary new structure on crystalline representations constructed here. Its construction is complicated by the very inexplicit nature of the ring $\Prism_{\mathcal{O}_C \widehat{\otimes}_{\mathcal{O}_K} \mathcal{O}_C}$ over which the descent isomorphism occurs. An essential input here is the Beilinson fibre sequence \cite{AMMNBFS}. The version of the Beilinson fibre sequence that we use states that for a torsionfree $p$-complete ring $R$, there is a fibre sequence of spectra
\[
\mathrm{TC}(R)^\wedge_p[\tfrac 1p]\to \mathrm{TC}(R/p)^\wedge_p[\tfrac 1p]\to \mathrm{HC}(R)^\wedge_p[\tfrac 1p].
\]
If, for example, $R=\mathcal O_{\mathbb C_p}$, then all three spectra that occur are concentrated in even degrees; via the comparisons between topological cyclic homology and $p$-adic Hodge theory, applying $\pi_{2n}$ then gives a short exact sequence
\[
0\to \mathbb Q_p(n)\to (B_{\mathrm{crys}}^+)^{\varphi=p^n}\to B_{\mathrm{dR}}^+/\mathrm{Fil}^n\to 0,
\]
i.e.~the usual fundamental exact sequence of $p$-adic Hodge theory. Our use of the Beilinson fibre sequence will be for quasiregular semiperfectoid rings $R$, where again the terms can be made explicit in terms of prismatic cohomology.

\subsection{Relation to theory of Breuil-Kisin modules}
Theorem~\ref{MainThmIntro} has some antecedents in the literature, most notably in the work of Kisin \cite{Kisin}. To explain the connection to Kisin's results, fix a uniformizer $\pi \in \mathcal{O}_K$, giving rise to a $W(k)$-algebra surjection $\mathfrak{S} := W(k)\llbracket u \rrbracket \xrightarrow{\varphi} \mathcal{O}_K$ with kernel $I = (E(u))$ generated by an Eisenstein polynomial $E(u)$; endow $\mathfrak{S}$ with the unique Frobenius lift $\varphi:\mathfrak{S} \to \mathfrak{S}$ determined by $\varphi(u) = u^p$. Then Kisin proved the following:

\begin{theorem}[Kisin's {\cite[Theorem 0.1]{Kisin}}, Theorem~\ref{KisinFullyFaithfulBK}]
\label{KisinFFIntro}
There is a fully faithful embedding
\[ D_{\mathfrak{S}}:\mathrm{Rep}^{\crys}_{\mathbf{Z}_p}(G_K) \to \mathrm{Vect}^{\varphi}(\mathfrak{S}) \]
where the target is the category of $F$-crystals over $(\mathfrak{S},I)$, defined as in Definition~\ref{PrismaticFCrysIntro} after replacing $(\mathcal{O}_\Prism,\varphi,\mathcal{I}_\Prism)$ with $(\mathfrak{S},\varphi,I)$ and denoted $\mathrm{Mod}^{\varphi}_{\mathfrak{S}}$ in \cite{Kisin}.
\end{theorem}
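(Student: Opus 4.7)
The plan is to construct the functor $D_{\mathfrak{S}}$ in two stages---first as an étale $\varphi$-module over the $p$-adic completion $\mathcal{O}_{\mathcal{E}}$ of $\mathfrak{S}[1/u]$ via Fontaine's theory, then as an $F$-crystal over $(\mathfrak{S}, E(u))$ via a descent argument using Kedlaya's slope filtration theorem---and then to deduce full faithfulness from the classical fact that crystalline representations are determined by their restrictions to a Kummer tower.

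For the construction, fix a compatible system $\pi^\flat = (\pi_n)_{n \geq 0}$ of $p$-power roots of $\pi$ in $\overline{K}$, set $K_\infty := \bigcup_n K(\pi_n)$ and $G_\infty := \mathrm{Gal}(\overline{K}/K_\infty)$. The Teichmüller lift $[\pi^\flat] \in A_{\inf} := W(\mathcal{O}_{C^\flat})$ defines a $\varphi$- and $G_\infty$-equivariant embedding $\mathfrak{S} \hookrightarrow A_{\inf}$ via $u \mapsto [\pi^\flat]$, under which $E(u)$ maps to a generator of $\ker(\theta)$. For $T \in \mathrm{Rep}^{\crys}_{\mathbf{Z}_p}(G_K)$, Fontaine's equivalence produces a finite free étale $\varphi$-module $M(T)$ over $\mathcal{O}_\mathcal{E}$ recovering $T|_{G_\infty}$. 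To descend $M(T)$ to a finite projective $\varphi$-module $\mathfrak{M}(T)$ over $\mathfrak{S}$ whose $\varphi$-cokernel is killed by a power of $E(u)$, I would invoke Berger's observation that Kedlaya's slope filtration theorem applies here: the generic fibre $M(T)[1/p]$ extends to a $\varphi$-module over the Robba ring, and crystallinity of $V = T[1/p]$ provides, via Berger's equivalence between $(\varphi, \Gamma)$-modules and filtered $\varphi$-modules, a canonical slope-zero $\varphi$-stable $\mathfrak{S}[1/p]$-lattice $\mathfrak{M}(V) \subset M(V)[1/p]$ with the correct Hodge-Tate weights. Setting $\mathfrak{M}(T) := \mathfrak{M}(V) \cap M(T)$ then produces the desired integral $F$-crystal.

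Full faithfulness is the easier step. Composing $D_{\mathfrak{S}}$ with base change along $\mathfrak{S} \hookrightarrow \mathcal{O}_\mathcal{E}$ recovers Fontaine's equivalence between $\mathbf{Z}_p$-representations of $G_\infty$ and étale $\varphi$-modules over $\mathcal{O}_\mathcal{E}$, so it suffices to show that restriction $T \mapsto T|_{G_\infty}$ is fully faithful on $\mathrm{Rep}^{\crys}_{\mathbf{Z}_p}(G_K)$. Passing to internal Homs reduces this to the assertion $V^{G_K} = V^{G_\infty}$ for every crystalline $\mathbf{Q}_p$-representation $V$, a classical fact in the theory of crystalline representations.

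The main obstacle is the construction of $\mathfrak{M}(V)$ and the verification of its finiteness and bounded-Frobenius properties. This step requires Kedlaya's slope filtration theorem (together with Berger's interpretation) in an essential way, since neither the finiteness over $\mathfrak{S}[1/p]$ nor the precise bound on the Frobenius denominators is visible from the étale data alone. Once this is in hand, checking that $\mathfrak{M}(V) \cap M(T)$ is finite projective over $\mathfrak{S}$ itself is a separate but more routine finiteness exercise over the two-dimensional regular local ring $\mathfrak{S}$.
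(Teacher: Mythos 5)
Your construction sketch follows Kisin's original route (étale $\varphi$-modules over $\mathcal{O}_{\mathcal{E}}$, descent to $\mathfrak{S}[1/p]$ via Kedlaya's slope filtration theorem and Berger's observation, then intersecting with the integral lattice), which differs from what the paper does — the paper defines $D_{\mathfrak{S}}$ as the composite of the inverse to the main equivalence $T_{\mathcal{O}_K}$ of Theorem~\ref{MainThm} with evaluation at the Breuil-Kisin prism $(\mathfrak{S}, E(u))$, so the hard descent work happens over $A_{\inf}$ and $\Prism_{\mathcal{O}_C \widehat{\otimes}_{\mathcal{O}_K}\mathcal{O}_C}$ rather than over the Robba ring attached to $\mathfrak{S}$. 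That much is a legitimate alternative.

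However, your full faithfulness argument contains a fatal circularity. You factor $D_{\mathfrak{S}}$ through base change to $\mathcal{O}_{\mathcal{E}}$, so that the composite is the restriction functor $T \mapsto T|_{G_\infty}$, and then assert that full faithfulness of this restriction on $\mathrm{Rep}^{\crys}_{\mathbf{Z}_p}(G_K)$ (equivalently, $V^{G_K} = V^{G_\infty}$ for crystalline $V$) is ``a classical fact.'' It is not: this is exactly Breuil's conjecture, which is Kisin's Theorem 0.2 — stated as Theorem~\ref{KisinBreuilIntro}/Corollary~\ref{KisinBreuil} in this paper — and it is proven there (and in Kisin's original work) as a \emph{consequence} of the full faithfulness of $D_{\mathfrak{S}}$ combined with full faithfulness of the base change $\mathrm{Vect}^{\varphi}(\mathfrak{S}) \to \mathrm{Vect}^{\varphi}(\mathfrak{S}[1/E(u)]^{\wedge}_p)$ (Theorem~\ref{EtaleRealizeBK}). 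You have inverted the logical order: the hard content of Kisin's theory is precisely establishing full faithfulness of $D_{\mathfrak{S}}$, and Breuil's conjecture falls out afterwards, not vice versa. In the paper this step is handled by proving rational full faithfulness via the functor $\mathcal{D}$ to filtered $\varphi$-modules and identifying the composite with Fontaine's $D_{\crys}$, and then deducing the integral statement from the rational one using only the factorization of the forgetful functor to $\mathrm{Vect}(\mathbf{Z}_p)$ (not to $\mathrm{Rep}_{\mathbf{Z}_p}(G_{K_\infty})$) through $D_{\mathfrak{S}}$ — so no appeal to Breuil's conjecture is needed there.
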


The connection between Theorem~\ref{KisinFFIntro} and Theorem~\ref{MainThmIntro} is the following: the pair $(\mathfrak{S},I)$, which is an example of a Breuil-Kisin prism, gives an object of $\mathrm{Spf}(\mathcal{O}_K)_\Prism$ that covers the final object (Example~\ref{BKAinfPrismOK}). Thus, there is a natural faithful ``evaluation'' functor
\[  \mathrm{Vect}^{\varphi}(\mathrm{Spf}(\mathcal{O}_K)_\Prism, \mathcal{O}_\Prism) \xrightarrow{\mathrm{ev}_{(\mathfrak{S},I)}} \mathrm{Vect}^{\varphi}(\mathfrak{S}).\]
The functor $D_{\mathfrak{S}}$ in Theorem~\ref{KisinFFIntro} is then obtained by composing the inverse of the equivalence $T_{\mathcal{O}_K}$ in Theorem~\ref{MainThmIntro} with the evaluation functor $\mathrm{ev}_{(\mathfrak{S},I)}$ above\footnote{In fact, our methods allow us to prove full faithfulness assertion in Theorem~\ref{KisinFFIntro} as well. But this not a completely new proof of the latter as we use some of the same ideas as \cite{Kisin}. In particular, like \cite{Kisin}, we rely on Berger's observation \cite{BergerModuleWach} translating weak admissibility of filtered $\varphi$-modules to a boundedness condition for an auxiliary module over an extended Robba ring; unlike \cite{Kisin}, we apply this observation directly over $A_{\inf}$.}. Realizing the functor $D_{\mathfrak{S}}$ via the equivalence $T_{\mathcal{O}_K}$ in this fashion has some concrete consequences. For instance, the target of the functor $D_{\mathfrak{S}}$ depends on the choice of the uniformizer $\pi$, while the equivalence $T_{\mathcal{O}_K}$ is completely canonical; factoring the former over the latter then immediately yields a certain ``independence of the uniformizer'' result for the latter that was shown by Liu \cite{LiuKisinCompat} (see Remark~\ref{LiuCompat}). More conceptually, lifting an object of $\mathrm{Vect}^{\varphi}(\mathfrak{S})$ to a prismatic $F$-crystal over $\mathcal{O}_K$ has an intrinsic meaning in the prismatic theory: as $(\mathfrak{S},I)$ covers the final object of $\mathrm{Spf}(\mathcal{O}_K)_\Prism$, such a lift determines and is determined by  descent data over the self-coproduct $\mathfrak{S}^{(1)}$ of $(\mathfrak{S},I)$ with itself in $\mathrm{Spf}(\mathcal{O}_K)_\Prism$. This interpretation of the essential image of $D_{\mathfrak{S}}$ can be roughly regarded as an integral avatar of Kisin's result describing the essential image of $D_{\mathfrak{S}}[1/p]$ in terms of the existence of certain logarithmic connections on the open unit disc; see \S \ref{ss:LogConnBK} for more on the relation between the two notions.

Kisin used Theorem~\ref{KisinFFIntro} to establish the following conjecture of Breuil \cite{BreuilKinfinity} on Galois representations.

\begin{theorem}[Kisin's {\cite[Theorem 0.2]{Kisin}}, Theorem~\ref{KisinBreuil}]
\label{KisinBreuilIntro}
Let $K_\infty/K$ be the extension of $K$ obtained by adjoining a compatible system of $p$-power roots of the uniformizer $\pi$ inside $C$.  The restriction functor
\[ \mathrm{Rep}^{\crys}_{\mathbf{Q}_p}(G_K) \to \mathrm{Rep}_{\mathbf{Q}_p}(G_{K_\infty})\]
is fully faithful.
\end{theorem}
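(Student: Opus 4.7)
The plan is to factor the restriction functor through the category of Breuil-Kisin modules, leveraging Theorem~\ref{KisinFFIntro} for the full faithfulness of the first piece and a classical field-of-norms result for the second. Specifically, I would construct a factorization
\[ \mathrm{Rep}^{\crys}_{\mathbf{Q}_p}(G_K) \xrightarrow{D_{\mathfrak{S}}[1/p]} \mathrm{Vect}^{\varphi}(\mathfrak{S})[1/p] \xrightarrow{T_{\mathfrak{S}}} \mathrm{Rep}_{\mathbf{Q}_p}(G_{K_\infty}) \]
of the restriction functor, and show that both arrows are fully faithful.

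To construct $T_{\mathfrak{S}}$ I would use the canonical $\varphi$-equivariant map $\mathfrak{S} \hookrightarrow A_{\inf} = W(\mathcal{O}_{C^\flat})$ sending $u \mapsto [\pi^\flat]$, where $\pi^\flat = (\pi, \pi^{1/p}, \ldots) \in \mathcal{O}_{C^\flat}$ is determined by the compatible system defining $K_\infty$. Since $G_{K_\infty}$ is precisely the stabilizer of $\pi^\flat$ in $G_K$, this map is $G_{K_\infty}$-equivariant for the trivial action on $\mathfrak{S}$. Setting $T_{\mathfrak{S}}(\mathfrak{M}) := (\mathfrak{M} \otimes_{\mathfrak{S}} W(C^\flat))^{\varphi=1}[1/p]$ then defines the desired functor. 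For commutativity of the triangle, I would use that the \'etale realization of a prismatic $F$-crystal can equivalently be computed by evaluating at the perfect prism $A_{\inf}$ (which covers $\mathrm{Spf}(\mathcal{O}_K)_{\Prism}$) and taking $\varphi$-invariants after inverting $\mathcal{I}_{\Prism}$; comparing this with evaluating first at the Breuil-Kisin prism $(\mathfrak{S},I)$ and then pushing forward along $\mathfrak{S} \to A_{\inf}$ yields the claimed factorization, but now with the $G_K$-action restricted to $G_{K_\infty}$.

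Full faithfulness of $D_{\mathfrak{S}}[1/p]$ is Theorem~\ref{KisinFFIntro}. For the full faithfulness of $T_{\mathfrak{S}}$, I would further factor through the category of \'etale $\varphi$-modules over $\mathcal{O}_{\mathcal{E}} := \mathfrak{S}[1/u]^{\wedge}_p$: the second functor $\varphi\text{-Mod}^{\et}_{\mathcal{O}_{\mathcal{E}}[1/p]} \to \mathrm{Rep}_{\mathbf{Q}_p}(G_{K_\infty})$ is an equivalence by the Fontaine-Wintenberger field-of-norms theorem combined with Fontaine's equivalence for \'etale $\varphi$-modules, while the first functor $\mathrm{Vect}^{\varphi}(\mathfrak{S})[1/p] \to \varphi\text{-Mod}^{\et}_{\mathcal{O}_{\mathcal{E}}[1/p]}$ is fully faithful by an internal-Hom reduction to the identity $\mathfrak{M}^{\varphi=1}[1/p] = (\mathfrak{M} \otimes_{\mathfrak{S}} \mathcal{O}_{\mathcal{E}})^{\varphi=1}[1/p]$, a standard slope-theoretic calculation. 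The bulk of the theorem thus lies in Theorem~\ref{KisinFFIntro}, and the remainder is largely formal assembly; the subtlest point is the explicit identification of $T_{\mathfrak{S}} \circ D_{\mathfrak{S}}[1/p]$ with the plain restriction functor, which requires unwinding how the \'etale realization on the prismatic side interacts with evaluation at $(\mathfrak{S},I)$ and with the $G_{K_\infty}$-equivariant map $\mathfrak{S} \to A_{\inf}$.
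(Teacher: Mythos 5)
Your proposed factorization
\[ \mathrm{Rep}^{\crys}_{\mathbf{Q}_p}(G_K) \xrightarrow{D_{\mathfrak{S}}[1/p]} \mathrm{Vect}^{\varphi}(\mathfrak{S})[1/p] \to \varphi\text{-Mod}^{\et}_{\mathcal{O}_{\mathcal{E}}[1/p]} \xrightarrow{\simeq} \mathrm{Rep}_{\mathbf{Q}_p}(G_{K_\infty}) \]
is precisely the one the paper uses in the proof of Theorem~\ref{KisinBreuil}: first $D_{\mathfrak{S}}$ (Theorem~\ref{KisinFullyFaithfulBK}), then base change $-\otimes_{\mathfrak{S}} \mathfrak{S}[1/E(u)]^{\wedge}_p$, then the \'etale $\varphi$-module equivalence (Corollary~\ref{LocSysLaurentFCrysIsog}). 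The identification of the composite with the restriction functor, via the $G_{K_\infty}$-equivariant embedding $\mathfrak{S}\hookrightarrow A_{\inf}$, is also as in the paper.

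The one place where your argument has a real gap is the middle arrow. You assert that full faithfulness of $\mathrm{Vect}^{\varphi}(\mathfrak{S})[1/p] \to \varphi\text{-Mod}^{\et}_{\mathcal{O}_{\mathcal{E}}[1/p]}$ reduces, via internal Hom, to the identity $\mathfrak{M}^{\varphi=1}[1/p] = (\mathfrak{M} \otimes_{\mathfrak{S}} \mathcal{O}_{\mathcal{E}})^{\varphi=1}[1/p]$ and that this is ``a standard slope-theoretic calculation.'' This is exactly Theorem~\ref{EtaleRealizeBK} (Kisin's Proposition 2.1.12), and it is the most delicate ingredient in this chain. The internal-Hom reduction is correct, but the boundedness statement itself is not a formal consequence of slope theory: the difficulty is that a $\varphi$-invariant element of $\mathfrak{M}\otimes_{\mathfrak{S}}\mathcal{O}_{\mathcal{E}}$ has a priori no control near the Hodge-Tate divisor or its $\varphi$-translates on $\mathrm{Spa}(\mathfrak{S})$, and the open unit disc is not the Robba ring, so Kedlaya's slope filtration does not apply directly. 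The paper's proof of Theorem~\ref{EtaleRealizeBK} supplies the missing argument by pushing to $A_{\inf}$ and exploiting a specific orthogonality between the Kummer tower (governing $u$) and the cyclotomic tower (governing $q-1$); see Lemmas~\ref{Ainfentire} and \ref{BoundPoles}, which control poles at the points $x_n$, together with the $\varphi$-contraction step at the end of the proof. Without some version of this argument (or a citation of Kisin's Proposition 2.1.12 with its proof), your step (3) is an assertion, not a proof, and it is precisely where the content of the theorem lives.
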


We give an alternative proof of this result in this paper. Granting what was already explained above, the essential remaining point is to show that the base change functor
\[  \mathrm{Vect}^{\varphi}(\mathfrak{S})  \xrightarrow{- \otimes_{\mathfrak{S}} \mathfrak{S}[1/u]^{\wedge}_p} \mathrm{Vect}^{\varphi}(\mathfrak{S}[1/u]^{\wedge}_p) \]
is fully faithful, where the target is the category of finite free $\mathfrak{S}[1/u]^{\wedge}_p$-modules $M$ equipped with an isomorphism $\varphi^* M \simeq M$ (where we note that $E(u)$ is invertible in $\mathfrak{S}[1/u]^{\wedge}_p$).  We give a direct argument (Theorem~\ref{EtaleRealizeBK}) for this full faithfulness by comparing $\mathfrak{S}$ to Fontaine's period ring $A_{\inf}$; the main new idea is to observe and exploit a certain orthogonality property of the Kummer tower $K_\infty/K$ with the cyclotomic extension $K(\mu_{p^\infty})/K$ that manifests itself via the behaviour of certain elements of $A_{\inf}$ coming from each of these towers (Lemmas~\ref{Ainfentire} and \ref{BoundPoles}).

We expect that Theorem~\ref{MainThmIntro} should extend to the semistable case, provided one replaces the prismatic site with the log prismatic site of Koshikawa \cite{KoshikawaLogPrismaticI}, and moreover that the resulting theorem ought to be related to \cite{LiuNoteLattice} in the same way Theorem~\ref{MainThmIntro} was related to \cite{Kisin}. However, we do not pursue this direction here\footnote{Since the appearance of the first version of this paper, this goal was realized in \cite{DuLiuLogPrismFCrys}, which also gives a new proof of Theorem~\ref{MainThmIntro}.}.

\subsection*{Acknowledgements}

This project has a long history. In fact, we conjectured Theorem~\ref{MainThmIntro} almost immediately after the discovery of the prismatic site in the summer of 2017. After quick progress on all aspects except the boundedness of the descent isomorphism (Proposition~\ref{ACDescent}), we were stuck for a few years (as recorded in the second author's Fields Medal video for the 2018 ICM, where he can be seen staring at a blackboard, unsuccessfully contemplating the structure of $\Prism_{\mathcal O_C \widehat{\otimes}_{\mathcal O_K} \mathcal O_C}$), and moved on to other projects. Partially inspired by Drinfeld's \cite{DrinfeldPrismatization} as well as ongoing discussions with Toby Gee and Jacob Lurie, we returned to the project and were able to prove the desired boundedness using \cite{AMMNBFS} in the summer of 2020. In this period, we have benefitted from conversations with many mathematicians, including Vladimir Drinfeld, Toby Gee, Shizhang Li, Jacob Lurie, Akhil Mathew, Matthew Morrow and Andrew Snowden. We also thank Mark Kisin and the referee for useful comments on a previous version of this paper.

During the preparation of this paper, Bhatt was partially supported by the NSF (\#1801689, \#1952399, \#1840234), a Packard fellowship, and the Simons Foundation (\#622511). Scholze was supported by a DFG Leibniz Prize, and by the DFG under the Excellence Strategy – EXC-2047/1 – 390685813.

\newpage
\section{The absolute prismatic site and the quasi-syntomic site}
\label{sec:PrismQsyn}

In this section, we introduce the absolute prismatic site (Definition~\ref{AbsPrismSite}), and explain how to describe vector bundles (with respect to various sheaves of rings) on this site explicitly in terms of modules over prisms (Proposition~\ref{VBDescentPrism}). For future use, it will also be important to have a description of these bundles in terms of the quasi-syntomic site (Definition~\ref{QSynSite}), so we record such a description in Proposition~\ref{ComparePrismQSyn}.

\begin{notation}[Vector bundles and perfect complexes on a ringed topos]
Say $(\mathcal{X},\mathcal{O})$ is a ringed topos. 

A {\em vector bundle} on $(\mathcal{X},\mathcal{O})$ is an $\mathcal{O}$-module $E$ such that there exists a cover $\{U_i\}$ of $\mathcal{X}$ and finite projective $\mathcal{O}(U_i)$-modules $P_i$ such that $E|_{U_i} \simeq P_i \otimes_{\mathcal{O}(U_i)} \mathcal{O}_{U_i}$. Write $\mathrm{Vect}(\mathcal{X},\mathcal{O})$ for the category of all vector bundles. Note that $\mathrm{Vect}(\mathcal{X},\mathcal{O})$ can also be defined as the global sections over $\mathcal{X}$ of the sheafification of the presheaf of categories on $\mathcal{X}$ given by $U \mapsto \mathrm{Vect}(\mathcal{O}(U))$.

A {\em perfect complex} on $(\mathcal{X},\mathcal{O})$ is an object $E \in D(\mathcal{X},\mathcal{O})$  such that there exists a cover $\{U_i\}$ of $\mathcal{X}$ and perfect complexes $P_i \in D(\mathcal{O}(U_i))$ such that $E|_{U_i} \simeq P_i \otimes_{\mathcal{O}(U_i)} \mathcal{O}_{U_i}$. Write $D_{\perf}(\mathcal{X},\mathcal{O}) \subset D(\mathcal{X},\mathcal{O})$ for the full subcategory of perfect complexes. If $\mathcal{X}$ admits a basis of $\mathcal{O}$-acyclic objects (i.e., objects $V$ with $R\Gamma(V,\mathcal{O}) \simeq \mathcal{O}(V)$), then $D_{\perf}(\mathcal{X},\mathcal{O})$ can also be defined as the global sections over $\mathcal{X}$ of the sheafification of the presheaf of $\infty$-categories on $\mathcal{X}$ given by $U \mapsto D_{\perf}(\mathcal{O}(U))$; this will be the case in all our applications.
\end{notation}

To understand vector bundles with respect to certain ``Banach-style'' sheaves on the prismatic site, it will be quite convenient to use the following general descent theorem, generalizing known descent results for coherent sheaves in rigid geometry.

\begin{theorem}[Drinfeld-Mathew, {\cite[Theorem 5.8]{MathewDescent}}]
\label{DMDescent}
Let $R$ be a connective $E_\infty$-ring, and let $I \subset \pi_0(R)$ be a finitely generated ideal.  Consider the following functors defined on the $\infty$-category of connective $E_\infty$-$R$-algebras:
\begin{enumerate}
\item $S \mapsto  D_{\perf}(\mathrm{Spec}(S^{\wedge}_I) - V(IS))$. 
\item $S \mapsto  D^{-}_{coh}(\mathrm{Spec}(S^{\wedge}_I) - V(IS))$. 
\item $S \mapsto  \mathrm{Vect}(\mathrm{Spec}(S^{\wedge}_I) - V(IS))$. 
\end{enumerate}
Each of these functors is a sheaf for the $I$-completely flat topology. 
\end{theorem}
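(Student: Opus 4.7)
The plan is to reduce each of the three descent claims to classical faithfully flat descent for perfect complexes, bounded-above coherent complexes, and vector bundles on affine spectral schemes, by combining (i) the fact that each functor depends on $S$ only through its derived $I$-adic completion, with (ii) a Zariski cover of the punctured formal spectrum by finitely many standard affine opens.

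First I would normalize the setup. Since $\mathrm{Spec}(S^\wedge_I) - V(IS)$ depends only on $S^\wedge_I$, I may assume $S$ is derived $I$-complete; any $I$-completely faithfully flat cover $S \to T$ can then be replaced, termwise on its Čech nerve, by its derived $I$-adic completion, producing the Čech nerve of $S \to T^\wedge_I$ inside derived $I$-complete connective $E_\infty$-$S$-algebras. By definition of $I$-complete faithful flatness, $S \to T^\wedge_I$ is then honestly faithfully flat.

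Next, pick generators $f_1,\ldots,f_n$ of the finitely generated ideal $I \subset \pi_0(S)$, so that
\[ \mathrm{Spec}(S^\wedge_I) - V(IS) = \bigcup_{i=1}^n \mathrm{Spec}(S^\wedge_I[1/f_i]) \]
is a finite affine Zariski cover. Each of $D_{\perf}$, $D^-_{\mathrm{coh}}$, $\mathrm{Vect}$ satisfies Zariski (in fact finite-limit) descent on qcqs spectral schemes, so its value on the punctured formal spectrum is a finite limit, indexed by non-empty subsets $J \subset \{1,\ldots,n\}$, of the corresponding categories over $\mathrm{Spec}(S^\wedge_I[1/f_J])$. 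Since a finite limit of sheaves is a sheaf, it suffices to fix $J$ and show that the functor $S \mapsto D_{\perf}(S^\wedge_I[1/f_J])$ (and similarly for $D^-_{\mathrm{coh}}$, $\mathrm{Vect}$) is a sheaf for the $I$-completely faithfully flat topology.

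For this last step, given an $I$-completely faithfully flat map $S \to T$, the induced map $S^\wedge_I[1/f_J] \to T^\wedge_I[1/f_J]$ is again faithfully flat, since localization preserves faithful flatness and commutes with tensor products (so the localized Čech nerve coincides with the Čech nerve of the localized map). Classical faithfully flat descent for perfect complexes, bounded-above coherent complexes, and vector bundles on affine spectral schemes (as established, e.g., in Lurie's \emph{Spectral Algebraic Geometry}, or in the form used for descendability by Bhatt-Mathew) then closes the argument. The main obstacle I anticipate is the non-connective/derived bookkeeping: verifying that the derived $I$-completion of the iterated tensor products $T^{\otimes_S n}$, together with the subsequent localizations at $f_J$, really do assemble into the Čech nerve of the affine faithfully flat map to which the classical descent theorem applies. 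Once this compatibility between derived completion and the Čech construction is pinned down, the substance of the descent reduces cleanly to the affine faithfully flat case.
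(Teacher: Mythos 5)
The plan as written does not go through, and the obstruction is not the ``bookkeeping'' you flag at the end: the compatibility between derived completion and the Cech construction that your argument hinges on is genuinely false, and a prior flatness claim is also false.

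First, the assertion that ``by definition of $I$-complete faithful flatness, $S\to T^\wedge_I$ is then honestly faithfully flat'' does not hold. $I$-complete flatness of $S\to T$ means that the derived reduction of $T$ modulo $I$ is discrete and flat over that of $S$; this is a condition on the behavior of $T$ along $V(I)$ only, and does not force $T^\wedge_I$ to be flat over $S$ or $S^\wedge_I$. It is precisely in the non-noetherian settings where the theorem is applied---prismatic envelopes in the present paper, for instance---that $(p,I)$-completely flat maps are typically \emph{not} flat; were $I$-complete flatness to imply flatness after completion, there would be no need for the $I$-completely flat topology as a separate notion. Consequently the localized maps $S^\wedge_I[1/f_J]\to T^\wedge_I[1/f_J]$ need not be flat, let alone faithfully flat, and classical fppf descent cannot be invoked at the end.

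Second, and independently, localization at an element $f\in I$ does not commute with $I$-adic completion, so the diagram you are totalizing is not the Cech nerve of an affine faithfully flat map. Writing $T^{\widehat{\otimes} n}$ for the $n$-fold $I$-completed tensor power of $T$ over $S$, the localization $(T^{\widehat{\otimes} n})[1/f]$ is in general strictly larger than the $n$-fold uncompleted tensor power of $T^\wedge_I[1/f]$ over $S^\wedge_I[1/f]$ to which classical descent would apply. Already for the honestly faithfully flat map $\mathbf{Z}_p\to\mathbf{Z}_p\langle x\rangle$ with $I=(p)$ and $f=p$, the ring $\mathbf{Q}_p\langle x\rangle\otimes_{\mathbf{Q}_p}\mathbf{Q}_p\langle y\rangle$ is a proper dense subring of $\mathbf{Q}_p\langle x,y\rangle$. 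Thus even when the localized map happens to be flat, the totalization you must compute is taken along a strictly larger cosimplicial diagram than the one classical flat descent controls. Closing precisely this gap---descent for (almost) perfect complexes over Tate rings along Cech nerves built from completed tensor products---is the actual content of the Drinfeld--Mathew theorem, going back to Bosch--G\"ortz in the coherent case; Mathew's proof works directly with the $I$-completely flat Cech nerve and exploits the finiteness and $t$-structure properties of almost perfect complexes to compare the two totalizations, rather than reducing to ordinary affine flat descent on Zariski charts.
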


We remark that the same result also applies to $D_{\perf}(S^{\wedge}_I)$, $D^{-}_{coh}(S^{\wedge}_I)$ and $\mathrm{Vect}(S^{\wedge}_I)$, i.e.~without inverting $I$. In that case, the result is easy, as all $\infty$-categories are equivalent to the limit of the corresponding $\infty$-categories for quotients of $S$ on which $I$ is nilpotent (where one can apply usual faithfully flat descent); see \cite[Appendix A]{ALBPrismaticDieudonne} for the proof in case of vector bundles.

The main object of study in this article is the following site, introduced in \cite[Remark 4.7]{BhattScholzePrisms}.

\begin{definition}[The absolute prismatic site]
\label{AbsPrismSite}
Given a $p$-adic formal scheme $X$, we write $X_\Prism$ for the opposite of the category of bounded prisms $(A,I)$ equipped with a map $\mathrm{Spf}(A/I) \to X$; we endow $X_\Prism$ with the topology induced by the flat topology on prisms and refer to it as the {\em absolute prismatic site of $X$}. Write $\mathcal{O}_\Prism$ for the structure sheaf, and $\mathcal{I}_\Prism \subset \mathcal{O}_\Prism$ for the ideal sheaf of the Hodge-Tate divisor. 
\end{definition}

\begin{remark}
\label{PrismaticReplete}
The topos $\mathrm{Shv}(X_\Prism)$ is replete in the sense of \cite{BhattScholzeProetale}: this follows as an inductive limit of faithfully flat maps of prisms is a faithfully flat map of prisms. In particular, derived inverse limits behave well, so one has $\mathcal{O}_\Prism \simeq R\lim \mathcal{O}_\Prism/(p^n, \mathcal{I}_\Prism^n)$ since we have $A \simeq R\lim A/(p^n,I^n)$ for any bounded prism $(A,I)$; similar assertions hold true for the variants  $\mathcal{O}_\Prism[1/p]^{\wedge}_{\mathcal{I}_\Prism}$ or $\mathcal{O}_\Prism[1/\mathcal{I}_\Prism]^{\wedge}_p$.
\end{remark}

\begin{example}[The prism of a qrsp ring]
If $X=\mathrm{Spf}(R)$ for $R$ quasiregular semiperfectoid as in \cite[Definition 4.20]{BMS2}, then $X_\Prism$ has final object given by the prism $\Prism_R$ of $R$ by \cite[Proposition 7.10]{BhattScholzePrisms}.
\end{example}

\begin{example}[The Breuil-Kisin and $A_{\inf}$-prisms over $\mathcal{O}_K$]
\label{BKAinfPrismOK}
Let $K/\mathbf{Q}_p$ be a discretely valued extension with perfect residue field $k$. We shall construct two examples of objects in  $\mathrm{Spf}(\mathcal{O}_K)_\Prism$; both examples give covers of the final object of the topos.

\begin{enumerate}
\item (Breuil-Kisin prisms) Choose a uniformizer $\pi \in \mathcal{O}_K$. Writing $W = W(k)$ and $\mathfrak{S} = W\llbracket u\rrbracket$, we obtain a surjection $\mathfrak{S} \to \mathcal{O}_K$ with kernel generated by an Eisenstein polynomial $E(u)$. Endowing $\mathfrak{S}$ with the $\delta$-structure determined by the Witt vector Frobenius on $W(k)$ and $\varphi(u)=u^p$, the pair $(A,I) = (\mathfrak{S},(E(u))$ gives an object of $\mathrm{Spf}(\mathcal{O}_K)_\Prism$.

Moreover, we claim that $(A,I)$ covers the final object of $\mathrm{Shv}(\mathrm{Spf}(\mathcal{O}_K)_\Prism)$; one can deduce this by mapping $(A,I)$ to the $A_{\inf}$-prism and deducing the claim from the analogous property for the $A_{\inf}$-prism (see part (2) of this example), but we give a direct argument. Fix an object $(B,J) \in \mathrm{Spf}(\mathcal{O}_K)_\Prism$ with structure map $\mathcal{O}_K \to B/J$. We shall construct a faithfully flat map $(B,J) \to (C,JC)$ of prisms such that there exists a map  $(A,I) \to (C,JC)$ in $\mathrm{Spf}(\mathcal{O}_K)_\Prism$; this will prove the claim.  By standard deformation theory, there is a unique $W$-algebra structure on all objects in sight. Pick $v \in B$ lifting the image of $\pi \in \mathcal{O}_K$ under the structure map $\mathcal{O}_K \to B/J$; note that $\pi$ generates $\mathcal{O}_K$ as a $W$-algebra.  We then define $C$ as a suitable prismatic envelope:
\[ C = (A \otimes_W B)\{\frac{u-v}{J}\}^{\wedge}_{(p,J)} = B[u]\{\frac{u-v}{J}\}^{\wedge}_{(p,J)}.\]
By \cite[Proposition 3.13]{BhattScholzePrisms}, this gives a $(p,J)$-completely flat $\delta$-$B$-algebra, so $(C,JC)$ is a flat cover $(B,J)$; we shall check that this does the job. Since $u\equiv v \mod JC$, we also have $E(u) \equiv E(v) \mod JC$, whence $E(u) \in JC$ since $E(v) \in JB$ as $E(\pi) = 0$ in $\mathcal{O}_K$. By the irreducibility lemma (\cite[Lemma 2.24]{BhattScholzePrisms}) for distinguished elements, it follows that $E(u)C = JC$.  Thus, the natural map $A \to C$ extends to a map of prisms $(A,I) \to (C,JC)$.  Moreover, the two resulting compositions $\mathcal{O}_K \simeq A/I \to C/JC$ and $\mathcal{O}_K \to B/J \to C/JC$ are the same (they carry the generator $\pi \in \mathcal{O}_K$ to the same element $u=v \in C/JC$), so the map $(A,I) \to (C,JC)$ is indeed a map  in $\mathrm{Spf}(\mathcal{O}_K)_\Prism$, proving the desired covering property. In fact, one can check that the object $(C,JC)$ thus constructed is the coproduct of $(A,I)$ and $(B,J)$ in the category $\mathrm{Spf}(\mathcal{O}_K)_\Prism$.

\item (The $A_{\inf}$-prism) Fix a completed algebraic closure $C/K$. Then $\mathcal{O}_C$ is perfectoid, so $\Prism_{\mathcal{O}_C}$ determines an object of $\mathrm{Spf}(\mathcal{O}_K)_\Prism$ via restriction of scalars along $\mathcal{O}_K \to \mathcal{O}_C$. Using the fact that $\mathcal{O}_K \to \mathcal{O}_C$ is a quasi-syntomic cover as well as the lifting result in \cite[Proposition 7.11]{BhattScholzePrisms}, it follows that $\Prism_{\mathcal{O}_C}$ covers the final object of the topos $\mathrm{Shv}(\mathrm{Spf}(\mathcal{O}_K)_\Prism)$. Note that this construction will be used (and in fact generalized and elucidated) in the proof of Proposition~\ref{ComparePrismQSyn}.
\end{enumerate}
Both the above examples will feature prominently in the rest of the paper. 
\end{example}

\begin{proposition}[Describing vector bundles and perfect complexes on $X_\Prism$ explicitly]
\label{VBDescentPrism}
Let $X$ be a $p$-adic formal scheme. There is a natural equivalence
\begin{equation}
\label{eq:VBDescent}
 \lim_{(A,I) \in X_\Prism} \mathrm{Vect}(A) \simeq \mathrm{Vect}(X_\Prism, \mathcal{O}_\Prism).
 \end{equation}
A similar assertion holds true with $D_{\perf}(-)$ replacing $\mathrm{Vect}(-)$. Moreover, analogous statements hold true if $\mathcal{O}_\Prism$ is replaced by $\mathcal{O}_\Prism[1/p]^{\wedge}_{\mathcal{I}_\Prism}$ or $\mathcal{O}_\Prism[1/\mathcal{I}_\Prism]^{\wedge}_p$. 
\end{proposition}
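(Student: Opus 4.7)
Both sides of \eqref{eq:VBDescent} can be identified with the global sections over $X$ of, respectively, the presheaf $\mathcal{F}:(A,I)\mapsto \mathrm{Vect}(A)$ on $X_\Prism$ and its sheafification (as discussed in the preceding notation for $\mathrm{Vect}(\mathcal{X},\mathcal{O})$). Thus the claim reduces to showing that $\mathcal{F}$ is already a sheaf for the flat topology on $X_\Prism$: namely, for every $(p,I)$-completely faithfully flat cover $(A,I)\to (B,IB)$ in $X_\Prism$ with \v{C}ech nerve $B^\bullet$ formed inside bounded prisms (using derived $(p,I)$-completed tensor products), the natural map
\[ \mathrm{Vect}(A) \longrightarrow \lim_{[n]\in\Delta} \mathrm{Vect}(B^n) \]
is an equivalence, and analogously for $D_{\perf}$ and for the two Banach-style variants.

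The plan is to deduce this from the Drinfeld--Mathew descent theorem (Theorem~\ref{DMDescent}). Applying that theorem with $R=\mathbf{Z}_p$ and ideal $J=(p)+I$ gives descent for $S\mapsto \mathrm{Vect}(\mathrm{Spec}(S^{\wedge}_J)\setminus V(JS))$ along $J$-completely flat covers; since each $B^n$ is a bounded prism, hence already classically $J$-adically complete, we obtain descent for $\mathrm{Vect}(\mathrm{Spec}(B^n)\setminus V(p,I))$ along the cover at once. The Banach-style variants $\mathcal{O}_\Prism[1/p]^{\wedge}_{\mathcal{I}_\Prism}$ and $\mathcal{O}_\Prism[1/\mathcal{I}_\Prism]^{\wedge}_p$ are treated analogously: their values at $(A,I)$ are $A[1/p]^{\wedge}_I$ and $A[1/I]^{\wedge}_p$, and Theorem~\ref{DMDescent} applied with the principal ideal $J=(I)$ or $J=(p)$ (on the appropriate localization of $A$) gives the desired descent for vector bundles on the corresponding punctured formal spectra.

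The remaining step, which I expect to be the main obstacle, is to promote descent for $\mathrm{Vect}$ (or $D_{\perf}$) on the punctured spectrum $\mathrm{Spec}(B^n)\setminus V(p,I)$ to descent on the full $\mathrm{Spec}(B^n)$ — and similarly for the completed variants. This is carried out via a Beauville--Laszlo-style reconstruction: on the $(p,I)$-complete ring $B^n$, a vector bundle is equivalent to the data of a vector bundle on the open locus $\mathrm{Spec}(B^n)\setminus V(p,I)$, a vector bundle on the closed stratum $\mathrm{Spec}(B^n/(p,IB^n))$, and a gluing datum over the formal neighborhood of $V(p,I)$. The closed-stratum piece obeys classical faithfully flat descent, because modding out by $(p,I)$ turns our $(p,I)$-completely faithfully flat cover into a literally faithfully flat map of discrete rings, and the gluing is controlled by further applications of Theorem~\ref{DMDescent} to the relevant auxiliary rings. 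The compatibility of this Beauville--Laszlo decomposition with the cosimplicial structure on $B^\bullet$ then assembles everything into the desired descent statement. Finally, once $\mathrm{Vect}$ is handled, the same outline run at the level of perfect complexes, using the $D_{\perf}$-statement of Theorem~\ref{DMDescent}, delivers the corresponding assertion for $D_{\perf}$.
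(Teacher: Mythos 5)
Your proposal has a genuine gap, and it also misses the much simpler route that the paper uses for the base case.

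For the plain structure sheaf $\mathcal{O}_\Prism$, the paper does not invoke Theorem~\ref{DMDescent} at all. It simply notes that the assignment $(A,I) \mapsto \mathrm{Vect}(A)$ (or $D_{\perf}(A)$) is already a sheaf for the flat topology on $X_\Prism$, because derived $(p,I)$-completely faithfully flat descent holds for finite projective modules (and perfect complexes) over the $(p,I)$-complete rings $A$ themselves. This is classical complete flat descent applied directly to $A$, not to a punctured or completed spectrum, and it settles \eqref{eq:VBDescent} in one line. Your plan to route through DM descent and then ``promote'' from the punctured spectrum to all of $\mathrm{Spec}(B^n)$ introduces a serious difficulty where none exists: after applying Theorem~\ref{DMDescent} with the ideal $(p,I)$ (and note that the theorem requires the ideal to live in $\pi_0(R)$, so $R$ must be taken to be $A$, not $\mathbf{Z}_p$) you only control $\mathrm{Vect}(\mathrm{Spec}(B^n) \setminus V(p,IB^n))$, and passing back to $\mathrm{Vect}(B^n)$ is exactly the problem. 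The ``Beauville--Laszlo-style reconstruction'' you sketch to bridge that gap does not exist: there is no equivalence between vector bundles on $\mathrm{Spec}(B^n)$ and triples consisting of a bundle on the open complement of $V(p,I)$, a bundle on the closed stratum $\mathrm{Spec}(B^n/(p,I))$, and gluing data. Beauville--Laszlo glues a punctured scheme with a formal completion, not with a closed fiber; and since $B^n$ is already $(p,I)$-complete, the ``formal neighborhood of $V(p,I)$'' is all of $\mathrm{Spf}(B^n)$, so the purported gluing datum degenerates and the decomposition is vacuous at best, wrong at worst.

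For the Banach-style sheaves $\mathcal{O}_\Prism[1/p]^{\wedge}_{\mathcal{I}_\Prism}$ and $\mathcal{O}_\Prism[1/\mathcal{I}_\Prism]^{\wedge}_p$, where DM descent really is needed, your application of Theorem~\ref{DMDescent} is again aimed at the wrong target: taking $J=(I)$ on $A[1/p]$, the theorem controls $D_{\perf}(\mathrm{Spec}(A[1/p]^{\wedge}_I) \setminus V(I))$, i.e.\ the further localization $\mathbb{B}_{\dR}$-type ring, not $A[1/p]^{\wedge}_I$ itself. The paper instead first argues by d\'evissage modulo powers of $\mathcal{I}_\Prism$, reducing everything to the finite levels $A/I^n[1/p]$. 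Since $A/I^n$ is already $p$-complete, taking $J=(p)$ in Theorem~\ref{DMDescent} gives $D_{\perf}(\mathrm{Spec}(A/I^n)\setminus V(p)) = D_{\perf}(A/I^n[1/p])$, which is precisely the quantity appearing in the \v{C}ech limit---no promotion step is needed. Finally, to pass from perfect complexes to vector bundles, the paper records a separate observation (a derived complete perfect complex is a vector bundle iff its reduction mod a finitely generated ideal of definition is), which your outline does not address. You should restructure the argument along these lines: direct complete flat descent for $\mathcal{O}_\Prism$; d\'evissage mod $\mathcal{I}_\Prism^n$ plus Theorem~\ref{DMDescent} (with $J=(p)$) for the Banach variants at the level of $D_{\perf}$; and the derived-completeness criterion to pass from $D_{\perf}$ to $\mathrm{Vect}$.
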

\begin{proof}
The object on the right in \eqref{eq:VBDescent} can regarded as the global sections of the stackification of the assignment $(A,I) \mapsto \mathrm{Vect}(A)$ on $(A,I) \in X_\Prism$. Thus, we must show this assignment is already a sheaf for the flat topology. This follows by $(p,I)$-completely faithfully flat descent for vector bundles, see the lines following Theorem~\ref{DMDescent}. The same argument also applies to $D_{\perf}(-)$ replacing $\mathrm{Vect}(-)$.

Next, let us explain the descent claim for $\mathcal{O}_\Prism[1/p]^{\wedge}_{\mathcal{I}_\Prism}$; the claim for $\mathcal{O}_\Prism[1/\mathcal{I}_\Prism]^{\wedge}_p$ is analogous. For the functor $D_{\perf}(-)$, we may argue by devissage modulo powers of $\mathcal{I}_\Prism$, so it suffices to show 
\[ \lim_{(A,I) \in X_\Prism} D_{\perf}(A/I[1/p]) \simeq D_{\perf}(X_\Prism, \mathcal{O}_\Prism/\mathcal{I}_\Prism[1/p]).\]
Arguing as in the previous paragraph, the claim follows from Theorem~\ref{DMDescent} (1). The statement for vector bundles is then deduced by observing that given a commutative ring $R$ that is derived $J$-complete with respect to a finitely generated ideal $J$, an object $E \in D_{\perf}(R)$ lies in $\mathrm{Vect}(R)$ if and only if $E \otimes_R^L R/J \in D_{\perf}(R/J)$ lies in $\mathrm{Vect}(R/J)$. 
\end{proof}

\begin{example}[Describing vector bundles on $\mathrm{Spf}(\mathcal{O}_K)_\Prism$ via Breuil-Kisin prisms]
Keep notation as in Example~\ref{BKAinfPrismOK}.  Then we have a descent equivalence
\[ \mathrm{Vect}(\mathrm{Spf}(\mathcal{O}_K)_\Prism,\mathcal{O}_\Prism) \simeq \lim \mathrm{Vect}(A^\bullet) \simeq \lim \left(\xymatrix@1{\mathrm{Vect}(A^0) \ar@<.4ex>[r] \ar@<-.4ex>[r] & {\ } \mathrm{Vect}(A^1) \ar@<0.8ex>[r] \ar[r] \ar@<-.8ex>[r] & {\ } \mathrm{Vect}(A^2)   } \right) \]
where $(A^\bullet,I^\bullet)$ is the cosimplicial object of $\mathrm{Spf}(\mathcal{O}_K)_\Prism$ obtained by taking the Cech nerve of $(A,I)$ over the initial object. As $A^0=A$, this equivalence shows that lifting a finite projective $A$-module $M$ to a crystal of vector bundles on $\mathrm{Spf}(\mathcal{O}_K)_\Prism$ entails specifying a descent isomorphism $p_1^* M \simeq p_2^* M$ over $A^1$ satisfying the cocycle condition over $A^2$; see also Construction~\ref{PrismaticCechNerve} for an explicit description of $A^1$ (denoted $\mathfrak{S}^{(1)}$ there).
\end{example}

We next relate the prismatic site to the quasi-syntomic one, see also \cite{ALBPrismaticDieudonne}.

\begin{definition}[The quasi-syntomic site]
\label{QSynSite}
For a quasi-syntomic $p$-adic formal scheme $X$, write $X_{qsyn}$ for the opposite of the category of quasi-syntomic maps $\eta:\mathrm{Spf}(R) \to X$, endowed with the quasi-syntomic topology (see \cite[Definition 4.1, Variant 4.35]{BMS2}); we call this {\em the quasi-syntomic site} of $X$. Write $X_{qrsp} \subset X_{qsyn}$ for the full subcategory spanned by $\eta:\mathrm{Spf}(R) \to X$ with $R$ semiperfect modulo $p$ and admitting a map from a perfectoid ring (or equivalently with $R$ qrsp, see \cite[Definition 4.20, Variant 4.35]{BMS2}). By \cite[Proposition 4.31, Variant 4.35]{BMS2}, restricting sheaves induces an equivalence 
\[ \mathrm{Shv}(X_{qsyn}) \simeq \mathrm{Shv}(X_{qrsp}),\]
which we can use to define sheaves on $X_{qsyn}$. In particular, the assignment carrying $(\eta:\mathrm{Spf}(R) \to X) \in X_{qrsp}$ to $\Prism_R$ gives a sheaf $\Prism_\bullet$ of rings on $X_{qsyn}$; write $I \subset \Prism_\bullet$ for the ideal sheaf of the Hodge-Tate divisor.
\end{definition}

\begin{remark}
The topos $\mathrm{Shv}(X_{qsyn})$ is replete: this follows as an inductive limit of quasi-syntomic covers is a quasi-syntomic cover. Thus, one has analogs of the assertions in Remark~\ref{PrismaticReplete} with the triple $(X_\Prism,\mathcal{O}_\Prism, \mathcal{I}_\Prism)$ replaced by $(X_{qsyn},\Prism_\bullet, I)$.
\end{remark}

\begin{remark}
If $X$ is a quasi-syntomic $p$-adic formal scheme and $R \in X_{qsyn}$, then $R$ is itself a quasi-syntomic ring. In particular, if $R$ is qrsp, then $\Prism_R$ is a bounded prism. 
\end{remark}

\begin{example}
If $X=\mathrm{Spf}(R)$ for $R$ quasiregular semiperfectoid, then $X_{qrsp}$ has a final object given by $R$ itself.
\end{example}

\begin{proposition}[Describing vector bundles and perfect complexes on $X_{qsyn}$ explicitly]
\label{VBDescentQSyn}
Let $X$ be a quasi-syntomic $p$-adic formal scheme. Then there is a natural equivalence
\[ \lim_{R \in X_{qrsp}} \mathrm{Vect}(\Prism_R) \simeq \mathrm{Vect}(X_{qsyn},\Prism_\bullet).\]
A similar assertion holds true with $D_{\perf}(-)$ replacing $\mathrm{Vect}(-)$. Moreover, analogous statements hold true if $\Prism_\bullet$ is replaced with $\Prism_\bullet[1/p]^{\wedge}_I$ or $\Prism_\bullet[1/I]^{\wedge}_p$.
\end{proposition}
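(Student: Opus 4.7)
My plan is to proceed in close parallel with the proof of Proposition~\ref{VBDescentPrism}, now on the quasi-syntomic side. Using the equivalence $\mathrm{Shv}(X_{qsyn}) \simeq \mathrm{Shv}(X_{qrsp})$ from Definition~\ref{QSynSite} (which also extends to stacks and $\infty$-categories of sheaves), the right-hand side $\mathrm{Vect}(X_{qsyn},\Prism_\bullet)$ is identified with the global sections over $X_{qrsp}$ of the stackification of the presheaf $R \mapsto \mathrm{Vect}(\Prism_R)$; the analogous identification holds for $D_{\perf}(-)$ and for the two Banach-style variants $\Prism_\bullet[1/p]^{\wedge}_I$ and $\Prism_\bullet[1/I]^{\wedge}_p$. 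The proposition therefore reduces to showing that each of these presheaves is already a sheaf for the quasi-syntomic topology on $X_{qrsp}$.

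The key input is that a quasi-syntomic cover $R \to S$ in $X_{qrsp}$ induces a $(p,I)$-completely faithfully flat map of bounded prisms $\Prism_R \to \Prism_S$; this is a functoriality/flatness property of the prism of a qrsp ring, implicit in \cite[Proposition 7.10, Proposition 7.11]{BhattScholzePrisms} and already used in Example~\ref{BKAinfPrismOK}(2). Granting this, the sheaf property for $R \mapsto \mathrm{Vect}(\Prism_R)$ and $R \mapsto D_{\perf}(\Prism_R)$ follows from $(p,I)$-completely faithfully flat descent for vector bundles and perfect complexes over bounded prisms, exactly as in the first paragraph of the proof of Proposition~\ref{VBDescentPrism}.

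For the two localized-completed variants, I would adapt the second half of that same proof. For $R \mapsto D_{\perf}(\Prism_R[1/p]^{\wedge}_I)$ one d\'evisses modulo powers of $I$ and applies Theorem~\ref{DMDescent}(1) with $\Prism_R$ in place of $R$ and $I\Prism_R$ in place of the finitely generated ideal; the case of $\Prism_R[1/I]^{\wedge}_p$ is identical with the roles of $p$ and $I$ swapped. The vector bundle statement in each variant is then deduced from the perfect complex statement via the standard criterion: a perfect complex $E$ over a commutative ring $R$ that is derived $J$-complete for a finitely generated ideal $J$ lies in $\mathrm{Vect}(R)$ if and only if $E \otimes^L_R R/J$ lies in $\mathrm{Vect}(R/J)$.

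The only step that is not a mechanical transcription of the prismatic case is the compatibility of quasi-syntomic covers of qrsp rings with $(p,I)$-completely faithfully flat covers of their prisms, so this is where I expect the main (modest) technical content to sit. Once that is in hand, the rest is a line-by-line translation of Proposition~\ref{VBDescentPrism}.
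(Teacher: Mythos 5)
Your proof follows the paper's own (very terse) argument: the paper likewise reduces to $X_{qrsp}$ via the topos equivalence and then invokes ``similar arguments to the ones in Proposition~\ref{VBDescentPrism}.'' Your fleshing-out of the d\'evissage for the two Banach-style variants is exactly what the paper intends.

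One caution on the step you single out as carrying the technical content. You assert that a quasi-syntomic cover $R \to S$ in $X_{qrsp}$ induces a $(p,I)$-completely faithfully flat map $\Prism_R \to \Prism_S$, citing \cite[Propositions 7.10, 7.11]{BhattScholzePrisms} and Example~\ref{BKAinfPrismOK}(2). What those references actually provide is weaker: that $\Prism_S$ \emph{covers the final object} of $\mathrm{Shv}(R_\Prism)$ (via the lifting result in Proposition~7.11), which is exactly the formulation the paper uses later in the proof of Proposition~\ref{ComparePrismQSyn}. Covering the final object does not formally imply faithful flatness of the specific arrow $\Prism_R \to \Prism_S$. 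Two options: (a) note that ``covers the final object'' already suffices --- combining it with the initiality of $\Prism_R$ in $R_\Prism$ and with Proposition~\ref{VBDescentPrism} gives $\mathrm{Vect}(\Prism_R) \simeq \mathrm{Vect}(R_\Prism,\mathcal{O}_\Prism) \simeq \lim \mathrm{Vect}(\Prism_{S^\bullet})$, which is the sheaf property you need; or (b) prove the faithful flatness directly, which is indeed true but requires the (relative) Hodge--Tate comparison and the filtration on $\overline{\Prism}_{S/A}$ by divided powers of $L_{S/(A/I)}[-1]$, not merely the lifting statement of Proposition~7.11. Either way the conclusion and the overall strategy are right; just be aware that the cited propositions don't give the flatness claim in the literal form you stated it.
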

\begin{proof}
As $X_{qsyn}$ and $X_{qrsp}$ give the same topos, we may replace $X_{qsyn}$ with $X_{qrsp}$ in all assertions. In this case, the claim follows by similar arguments to the ones in Proposition~\ref{VBDescentPrism}.
\end{proof}

\begin{proposition}[Relating $X_\Prism$ and $X_{qsyn}$]
\label{ComparePrismQSyn}
Let $X$ be a quasi-syntomic $p$-adic formal scheme. Then there is a natural equivalence
\begin{equation}
\label{eq:VBCompareQSynPrism}
 \mathrm{Vect}(X_\Prism,\mathcal{O}_\Prism) \simeq \mathrm{Vect}(X_{qsyn},\Prism_\bullet).
 \end{equation}
A similar assertion holds true with $D_{\perf}(-)$ replacing $\mathrm{Vect}(-)$. Moreover, analogous statements hold true if $\mathcal{O}_\Prism$ is replaced by $\mathcal{O}_\Prism[1/p]^{\wedge}_{\mathcal{I}_\Prism}$ or $\mathcal{O}_\Prism[1/\mathcal{I}_\Prism]^{\wedge}_p$, and  $\Prism_\bullet$ is correspondingly replaced with $\Prism_\bullet[1/p]^{\wedge}_I$ or $\Prism_\bullet[1/I]^{\wedge}_p$.
\end{proposition}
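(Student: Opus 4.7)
The plan is to descend both sides of~\eqref{eq:VBCompareQSynPrism} to a common limit along a quasi-syntomic cover of $X$ by qrsp rings. A qrsp object $(\Spf(R) \to X) \in X_{qrsp}$ functorially produces a prismatic object $(\Prism_R, I_R, \Spf(\Prism_R/I_R) \simeq \Spf(R) \to X) \in X_\Prism$ by \cite[Proposition 7.10]{BhattScholzePrisms}, giving a functor $u: X_{qrsp} \to X_\Prism$ that carries quasi-syntomic covers to flat covers of prisms by \cite[Proposition 7.11]{BhattScholzePrisms}. Since tautologically $u^* \mathcal{O}_\Prism = \Prism_\bullet$, restriction along $u$ provides a natural comparison functor $\mathrm{Vect}(X_\Prism, \mathcal{O}_\Prism) \to \mathrm{Vect}(X_{qsyn}, \Prism_\bullet)$, and analogously for $D_{\perf}$ and for the indicated localized/completed variants.

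To show this comparison is an equivalence, use that $X$ is quasi-syntomic to choose a quasi-syntomic cover $\Spf(R) \to X$ with $R$ qrsp (working locally in $X$ if $X$ is not affine), and let $R^\bullet$ denote its Cech nerve in $X_{qrsp}$, so each $R^n$ is qrsp. Proposition~\ref{VBDescentQSyn} combined with quasi-syntomic descent along $R \to X$ identifies $\mathrm{Vect}(X_{qsyn}, \Prism_\bullet) \simeq \lim_{[n]\in\Delta}\mathrm{Vect}(\Prism_{R^n})$. On the prismatic side, as in Example~\ref{BKAinfPrismOK}(2), the image $u(R) = (\Prism_R, I_R)$ gives a flat cover of the final object of $\Shv(X_\Prism)$; its Cech nerve in $X_\Prism$ identifies termwise with $(\Prism_{R^n}, I_{R^n})$, so Proposition~\ref{VBDescentPrism} together with flat descent along this cover yields $\mathrm{Vect}(X_\Prism, \mathcal{O}_\Prism) \simeq \lim_{[n]\in\Delta}\mathrm{Vect}(\Prism_{R^n})$ as well. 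These two descriptions agree compatibly under the comparison functor, proving \eqref{eq:VBCompareQSynPrism}; the cases of $D_{\perf}$ and the variants with $[1/p]^{\wedge}_{\mathcal{I}_\Prism}$ or $[1/\mathcal{I}_\Prism]^{\wedge}_p$ follow by the same recipe using the corresponding parts of Propositions~\ref{VBDescentPrism} and \ref{VBDescentQSyn}.

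The hard part I expect is matching the Cech nerves: showing that the iterated self-coproduct of $(\Prism_R, I_R)$ in $X_\Prism$, which computes the self-fibre-products of the associated representable sheaf over the final object of $\Shv(X_\Prism)$, agrees termwise with the prisms of the iterated ($p$-completed) self-coproducts $R^n$ of $R$ in qrsp rings over $X$. This rests on the universal property of $\Prism_R$ for qrsp $R$ from \cite[Proposition 7.10]{BhattScholzePrisms}, together with the observation that $p$-completed coproducts of qrsp rings over $X$ remain qrsp, so that $\Prism$ is defined on them and sends such coproducts to coproducts in $X_\Prism$ in the sense required.
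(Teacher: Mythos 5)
Your proposal is correct and follows essentially the same route as the paper: define the comparison via the functor $S \mapsto \Prism_S$ from qrsp objects to prisms (continuous by \cite[Proposition 7.11]{BhattScholzePrisms}), then identify both sides with $\lim_\Delta \mathrm{Vect}(\Prism_{R^\bullet})$ by Cech descent along a single qrsp cover, using the universal property of $\Prism_S$ from \cite[Proposition 7.10]{BhattScholzePrisms} to match the two Cech nerves. The paper additionally notes two ways to see well-definedness independent of the choice of cover (siftedness of the category of such $S$, or that $\Prism(-)$ induces a map of ringed topoi $\nu$), but your upfront construction of the comparison functor by restriction along $u$ makes this step automatic.
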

\begin{proof}
We give the argument for \eqref{eq:VBCompareQSynPrism}; the rest of the statements follow similarly. Using \cite[Proposition 4.31, Variant 4.35]{BMS2}, it suffices to prove the assertions with $X_{qsyn}$ replaced by $X_{qrsp}$.  By Zariski descent for both sides, we may assume $X = \mathrm{Spf}(R)$ is affine. In the rest of the proof, we use affine notation, so $R_{qrsp}$ is a category of $R$-algebras (rather than its opposite), etc. 

There is a natural functor $\Prism(-):R_{qrsp} \to R_\Prism$ determined by $S \mapsto \Prism_S$. Using the initiality of $\Prism_S \in S_\Prism$ for qrsp $S$, this functor satisfies the following universal property: for any $(A,I) \in R_\Prism$, we have
\[ \mathrm{Hom}_{R-\text{alg}}(S, A/I) \simeq \mathrm{Hom}_{R_\Prism}(\Prism_S, (A,I)).\]
In particular, the functor $\Prism(-)$ commutes with finite non-empty coproducts. Moreover, it follows from \cite[Proposition 7.11]{BhattScholzePrisms} that if $S \in R_{qrsp}$ is a cover of $R$ (i.e., $p$-completely faithfully flat over $R$), then $\Prism_S \in R_\Prism$ determines a cover of the final object of $\mathrm{Shv}(X_\Prism)$.

We can now prove the proposition. Choose a quasi-syntomic cover $R \to S$ with $S$ qrsp. Write $S^\bullet$ for the Cech nerve of $R \to S$, so we have
\[ \mathrm{Vect}(R_{qsyn},\Prism_\bullet) \simeq \lim \mathrm{Vect}(\Prism_{S^\bullet})\] 
by Cech theory in $R_{qsyn}$. Moreover, the observations in the preceding paragraph and Cech theory in $R_\Prism$ then show that the right side is also $\mathrm{Vect}(R_\Prism, \mathcal{O}_\Prism)$, giving the equivalence in the theorem. To check this equivalence is independent of the choice of $S$, one can either use that the category of all $R \to S$ as above has finite non-empty coproducts and is thus sifted, or one can note that the remarks in the previous paragraph show that $\Prism(-)$ determines a map
\[ \nu:(\mathrm{Shv}(X_\Prism), \mathcal{O}_\Prism) \to (\mathrm{Shv}(X_{qsyn}), \Prism_\bullet) \]
of ringed topoi, with the desired equivalence being induced by $\nu^*$.
\end{proof}

\newpage
\section{Local systems on the generic fibre via the prismatic site}

The goal of this section is to explain how $\mathbf{Z}_p$-local systems on the generic fibre of a $p$-adic formal scheme $X$ can be regarded as certain $F$-crystals on $X_\Prism$ (Corollary~\ref{LocSysLaurentFCrysIsog}). In fact, this relationship, which ultimately comes from Artin-Schreier theory, is quite robust and extends to an equivalence of derived categories (Corollary~\ref{LocSysLaurentFCrys}).

\begin{notation}
For a bounded $p$-adic formal scheme $X$,\footnote{Recall that ``bounded'' means that it is locally of the form $\mathrm{Spf}(R)$ where $R$ is bounded in the sense that the $p$-primary torsion $R[p^\infty]$ has bounded exponent; cf.~\cite[Definition 3.2 (2)]{BhattScholzePrisms}.} we write $X_\eta$ for the generic fibre of $X$, regarded as a presheaf on perfectoid spaces over $\mathbf{Q}_p$ (that is in fact a locally spatial diamond, see \cite[Section 15]{ECoD}). We shall use $D^{(b)}_{lisse}(X_\eta,\mathbf{Z}_p)$ to denote the full subcategory of $D(X_{\eta,proet}, \mathbf{Z}_p)$ spanned by locally bounded objects which are derived $p$-complete and whose mod $p$ reduction has cohomology sheaves that are locally constant with finitely generated stalks. We will use that the association $X\mapsto D^{(b)}_{lisse}(X_\eta,\mathbf{Z}_p)$ defines a sheaf of $\infty$-categories for the quasisyntomic topology on $X$, as follows from the v-descent results in \cite{ECoD} (and the observation that any quasisyntomic cover of $X$ induces a v-cover of $X_\eta$).
\end{notation}

For the following definition, recall that for a prism $(A,I)$, even while $I$ is not $\varphi$-stable, there is still a Frobenius on the $p$-adic completion $A[1/I]^\wedge_p$ -- for this it suffices that $\varphi(I)\equiv I^p$ modulo $p$.

\begin{definition}[Laurent $F$-crystals]
\label{LaurentFCrysDef}
Fix a bounded $p$-adic formal scheme $X$. We define
\[ D_{\perf}(X_\Prism, \mathcal{O}_\Prism[1/\mathcal{I}_\Prism]^\wedge_p)^{\varphi=1} = \lim_{(A,I) \in X_\Prism} D_{\perf}(A[1/I]^\wedge_p)^{\varphi=1}.\]
Thanks to Proposition~\ref{VBDescentPrism}, this is the $\infty$-category of pairs $(E,\varphi_E)$, where $E$ is a crystal of perfect complexes  on $(X_\Prism,\mathcal{O}_\Prism[1/\mathcal{I}_\Prism]^{\wedge}_p)$ and $\varphi_E:\varphi^* E \simeq E$ is an isomorphism. Similarly, we set
\[ \mathrm{Vect}(X_\Prism, \mathcal{O}_\Prism[1/\mathcal{I}_\Prism]^\wedge_p)^{\varphi=1} = \lim_{(A,I) \in X_\Prism} \mathrm{Vect}(A[1/I]^{\wedge}_p)^{\varphi=1}\]
to be the category of crystals of vector bundles $E$ on $(X_\Prism,\mathcal{O}_\Prism[1/\mathcal{I}_\Prism]^{\wedge}_p)$ equipped with isomorphisms $\varphi_E:\varphi^* E \simeq E$. We shall informally refer to these objects as Laurent $F$-crystals on $X_\Prism$; the name $F$-crystal will be reserved for a more restrictive notion. 
\end{definition}

\begin{remark}
If $X$ is a scheme of characteristic $p$, then any prism $(A,I) \in X_\Prism$ must have $I=(p)$, whence $A[1/I]^{\wedge}_p = 0$. Thus, $\mathcal{O}_\Prism[1/\mathcal{I}_\Prism]^{\wedge}_p = 0$ and $D_{\perf}(X_\Prism, \mathcal{O}_\Prism[1/\mathcal{I}_\Prism]^\wedge_p)^{\varphi=1}=0$ as well. 
\end{remark}

To study Laurent $F$-crystals, we use what is often called Artin--Schreier--Witt theory, in the following form.

\begin{proposition}[The Riemann-Hilbert correspondence for $\mathbf{F}_p$-local systems in characteristic $p$]
\label{RHLocSys}
Let $S$ be a commutative $\mathbf{F}_p$-algebra. Then extension of scalars along $\mathbf{F}_p \to \mathcal{O}_{\mathrm{Spec}(S),et}$ and taking Frobenius fixed points give mutually inverse equivalences
\[ D^b_{lisse}(\mathrm{Spec}(S), \mathbf{F}_p) \simeq D_{\perf}(S)^{\varphi=1}.\]
\end{proposition}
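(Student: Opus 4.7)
The plan is to realize the equivalence as Artin-Schreier theory on the small \'etale site of $\mathrm{Spec}(S)$. Write $\alpha$ for the functor $L \mapsto L \otimes_{\mathbf{F}_p}^L \mathcal{O}_{\mathrm{Spec}(S),et}$, which descends by \'etale descent for perfect complexes to an object of $D_{\perf}(S)$ carrying a canonical $\varphi$-fixed structure (via the Frobenius on the second factor); write $\beta$ for the candidate inverse $(E, \varphi_E) \mapsto \mathrm{fib}\bigl(\varphi_E - 1 : E \otimes_S \mathcal{O}_{et} \to E \otimes_S \mathcal{O}_{et}\bigr)$. The entire argument will hinge on the Artin-Schreier short exact sequence
\[ 0 \to \mathbf{F}_p \to \mathcal{O}_{\mathrm{Spec}(S),et} \xrightarrow{\varphi-1} \mathcal{O}_{\mathrm{Spec}(S),et} \to 0 \]
on the small \'etale site, whose right-exactness reflects the fact that $X^p - X - a$ defines an \'etale cover in characteristic $p$.

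The identity $\beta \circ \alpha \simeq \mathrm{id}$ is then formal: tensoring Artin-Schreier over $\mathbf{F}_p$ with $L$ preserves the fiber sequence, so taking the fiber of $\varphi - 1$ on $\alpha(L)$ recovers $L$. For $\alpha \circ \beta \simeq \mathrm{id}$, one produces a natural map $\alpha(\beta(E)) \to E$ and checks it is an equivalence. The essential input is that every unit $F$-crystal of vector bundles on $S$ is \'etale-locally trivial: after choosing a basis, $\varphi_E$ is encoded by a matrix $A \in GL_n(S)$, and Lang's theorem --- the \'etale surjectivity of $B \mapsto B^{-1}\varphi(B)$ on $GL_n$, a nonabelian form of Artin-Schreier --- produces an \'etale cover over which $(E,\varphi_E) \simeq (S^n, \mathrm{id})$; on such trivialized objects the equivalence reduces to the previous step.

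The main obstacle I anticipate is extending the \'etale-local trivialization from vector bundles to perfect complexes. My approach would be to exploit that both $\alpha$ and $\beta$ are exact functors of stable $\infty$-categories compatible with \'etale localization --- the right-hand side forms a sheaf of $\infty$-categories for the \'etale topology by \'etale descent for perfect complexes, and the left-hand side does so by definition --- so that it suffices to verify the equivalence on strictly henselian stalks. There one can devissage along the cohomology sheaves (an isomorphism $\varphi^* E \simeq E$ descends to each $H^i$), expressing $(E,\varphi_E)$ as an iterated extension of shifts of vector-bundle unit $F$-crystals, and apply Lang simultaneously on a common refining \'etale cover to trivialize the whole complex, reducing $\alpha\circ\beta \simeq \mathrm{id}$ to the manifestly trivial case.
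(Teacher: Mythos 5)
Your overall architecture parallels the paper's: use that both sides are \'etale sheaves of $\infty$-categories to reduce to strictly henselian stalks, use Artin--Schreier for the unit $\beta\circ\alpha\simeq\mathrm{id}$, and use Lang's theorem (which is the content of the Katz reference the paper cites) for \'etale-local triviality of vector-bundle unit $F$-crystals. However, there is a genuine gap in the devissage step. You assert, parenthetically and without argument, that the isomorphism $\varphi^*E\simeq E$ ``descends to each $H^i$'' and that this exhibits $(E,\varphi_E)$ as an iterated extension of shifts of \emph{vector-bundle} unit $F$-crystals. That is precisely the nonformal input one must prove, and neither half of it is automatic.

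First, Frobenius pullback $\varphi^*=(-)\otimes_{S,\varphi}S$ is only right exact, so $H^i(\varphi^*E)$ need not equal $\varphi^*H^i(E)$; the isomorphism does pass to the top nonvanishing cohomology degree $n$, but to continue downward one must first split off $H^n(E)$ from $E$, and that splitting requires knowing $H^n(E)$ is projective --- which is the other half of your unproved assertion. Second, and this is the real content, the implication ``$H^n(E)$ finitely presented (from perfectness) with $\varphi^*H^n(E)\simeq H^n(E)$ $\Rightarrow$ $H^n(E)$ is a vector bundle'' is itself a theorem (the paper cites Kedlaya--Liu, Prop.\ 3.2.13, and supplies the argument: every Fitting ideal $J$ satisfies $J=\varphi(J)S$, which after noetherian approximation and Krull's intersection theorem forces $J\in\{0,S\}$; over your strictly henselian local ring one could alternatively note $J=\varphi(J)S\subset J^p\subset J^2$ and invoke Nakayama for finitely generated ideals). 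Without this lemma, your application of Lang's theorem has nothing to apply to, as it presupposes the cohomology modules are already vector bundles. You should supply this projectivity argument explicitly to close the proof.
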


This proposition is well-known, but we do not know a reference written in the above generality. The special case for vector bundles was proven by Katz \cite[Proposition 4.1.1]{Katzpadic} (for some reason, he makes the assumption that the ring is normal, but that is never used).

\begin{proof}
For full faithfulness, by finite \'etale descent for both sides, we reduce to the checking the statement for endomorphisms of the constant sheaf, which follows from the Artin-Schreier sequence. To show essential surjectivity, fix $(E,\alpha) \in D_{\perf}(S)^{\varphi=1}$, i.e., $E \in D_{\perf}(S)$ is a perfect $S$-complex and $\alpha:\varphi^* E \simeq E$ is an isomorphism. We must show that $(E,\alpha)$ lies in the essential image of the extension of scalars functor considered in the proposition. 

We claim that each $H^i(E)$ is a vector bundle. By descending induction, it suffices to show the claim for the highest non-zero cohomology group $H^n(E)$, which is finitely presented as $E$ is a perfect complex. Now the claim is \cite[Proposition 3.2.13]{KedlayaLiu}, but for the reader's convenience we include an argument. By noetherian approximation, we may assume $S$ is noetherian. As the property of being a vector bundle can be detected locally, we may assume $S$ is a local noetherian ring. The map $\alpha$ induces an isomorphism $\varphi^* H^n(E) \simeq H^n(E)$. But then any Fitting ideal $J \subset S$ of $H^n(E)$ satisfies $J = \varphi(J)S$, which forces $J \in \{0,S\}$ by Krull's intersection theorem. It follows that $H^n(E)$ is a vector bundle, as asserted.

As each $H^i(E)$ is a vector bundle and we have already shown full faithfulness, it suffices to prove the essential surjectivity claim in the proposition for each cohomology group separately, so we may assume $E$ is itself a vector bundle, in which case the result follows from (the proof of) \cite[Proposition 4.1.1]{Katzpadic}, see also \cite[Proposition 3.2.7]{KedlayaLiu}.
\end{proof}

\begin{example}[Laurent $F$-crystals over a qrsp]
\label{PerfdLaurentCrys}
Let $X = \mathrm{Spf}(R)$ with $R$ a qrsp ring. Then $X_\Prism$ has an initial object determined by the prism $(\Prism_R,I)$ of $R$. Consequently, we learn that
\[D_{\perf}(X_\Prism, \mathcal{O}_\Prism[1/\mathcal{I}_\Prism]^\wedge_p)^{\varphi=1} = D_{\perf}(\Prism_R[1/I]^\wedge_p)^{\varphi=1}.\]
If we further assume that $R$ is perfectoid, then we can identify $\Prism_R[1/I]^{\wedge}_p \simeq W(R^\flat[1/I])$. The Riemann-Hilbert correspondence in Proposition~\ref{RHLocSys} as well as the tilting equivalence imply that the $\infty$-category above identifies with the derived $\infty$-category $D^b_{lisse}(X_\eta, \mathbf{Z}_p)$ of lisse $\mathbf{Z}_p$-sheaves on the generic fibre $X_\eta = \mathrm{Spa}(R[1/p],R)$. 
\end{example}

\begin{proposition}[Invariance of unit $F$-crystals under completed perfections]
\label{BanachPerfFrobMod}
Let $R$ be a ring of characteristic $p$ containing an element $t$ such that $R$ is derived $t$-complete. Let $S = (R_\perf)^\wedge_t$ denote the $t$-completed perfection of $R$. The base change functors
\[ D_{\perf}(R[1/t])^{\varphi=1} \xrightarrow{a} D_{\perf}(R_\perf[1/t])^{\varphi=1} \xrightarrow{b} D_{\perf}(S[1/t])^{\varphi=1}\]
are equivalences. 
\end{proposition}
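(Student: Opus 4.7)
My plan is to handle the two base-change functors $a$ and $b$ separately, both via the principle (coming from Artin-Schreier theory) that unit $F$-crystals in characteristic $p$ are controlled by $\mathbb{F}_p$-étale cohomology.

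For $a$, I would use Frobenius descent. Since perfection presents $R_\perf[1/t]$ as the filtered colimit $\colim(R[1/t] \xrightarrow{\varphi} R[1/t] \xrightarrow{\varphi} \cdots)$, any perfect complex $N$ over $R_\perf[1/t]$ descends to a perfect complex $M_n$ over $R[1/t]$ at some finite stage by the usual compactness of perfect complexes under filtered colimits of rings. The additional data of a Frobenius isomorphism $\varphi_N$ on $N$ then canonically identifies the descents $M_n$ at different stages, assembling into an inverse functor that produces a well-defined $\varphi$-module $(M, \varphi_M) \in D_\perf(R[1/t])^{\varphi=1}$. Equivalently, phrased via Artin-Schreier, the perfection map is an equivalence on étale sites of characteristic-$p$ schemes, and this translates (using the Riemann-Hilbert interpretation below) immediately into the equivalence.

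For $b$, both $R_\perf$ and $S = (R_\perf)^\wedge_t$ are perfect $\mathbb{F}_p$-algebras, so I would apply the Riemann-Hilbert correspondence (Proposition~\ref{RHLocSys}) to get equivalences
\[ D_\perf(R_\perf[1/t])^{\varphi=1} \simeq D^{(b)}_{lisse}(\mathrm{Spec}(R_\perf[1/t])_\et, \mathbb{F}_p) \]
and similarly for $S[1/t]$; under this identification $b$ becomes the pullback of $\mathbb{F}_p$-étale local systems along $\mathrm{Spec}(S[1/t]) \to \mathrm{Spec}(R_\perf[1/t])$. By the Artin-Schreier fibre sequence $\mathbb{F}_p \to A \xrightarrow{\varphi-1} A$ in the étale topology, the claim reduces to showing that $\varphi - 1$ acts invertibly on the cofibre $C := \mathrm{cofib}(R_\perf[1/t] \to S[1/t])$. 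Since $R_\perf/t^n \simeq S/t^n$ for every $n \geq 1$, the cofibre $\mathrm{cofib}(R_\perf \to S)$ already has all powers of $t$ invertible, so $C \simeq \mathrm{cofib}(R_\perf \to S)$; this cofibre is exactly the failure of $R_\perf$ to be derived $t$-complete.

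The main obstacle is this last step: proving that $\varphi - 1$ is an isomorphism on the completion defect $\mathrm{cofib}(R_\perf \to S)$. This is the point where the hypothesis of derived $t$-completeness of $R$ genuinely enters, forcing $R_\perf$ to sit $t$-Henselianly inside $S$. The mechanism I anticipate is that Artin-Schreier equations $d^p - d = c$ with $c \in S$ can be solved inside $S$ modulo $R_\perf$ by a geometric-series iteration using the Frobenius (the formal series $-\sum_{n \geq 0} c^{p^n}$ converges $t$-adically once $c$ is made sufficiently $t$-divisible by subtracting an element of the dense subring $R_\perf$), with the $t$-Henselian property ensuring that the remaining ambiguity in $d$ lies in $R_\perf$. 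Making this precise to give both surjectivity and injectivity of $\varphi - 1$ on $C$ is the technical heart of the proof.
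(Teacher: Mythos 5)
Your handling of $a$ is fine and essentially the paper's own: both reduce it to the topological invariance of the étale site under Frobenius (equivalently, to the colimit description of perfection) combined with the Riemann--Hilbert equivalence of Proposition~\ref{RHLocSys}.

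The treatment of $b$ has a genuine gap. After passing to lisse $\mathbf{F}_p$-sheaves via Riemann--Hilbert, the functor $b$ becomes pullback along $\mathrm{Spec}(S[1/t]) \to \mathrm{Spec}(R_\perf[1/t])$, and you claim this being an equivalence reduces to $\varphi - 1$ being invertible on $C = \mathrm{cofib}(R_\perf \to S)$. That reduction is sound only for \emph{full faithfulness}: invertibility of $\varphi-1$ on $C$ identifies $R\Gamma_{\et}(-,\mathbf{F}_p)$ for the two schemes, and (finite-étale-locally) the Hom complexes between arbitrary lisse sheaves; this is roughly the content the paper offloads to \cite[Lemma~9.2]{BhattScholzePrisms}. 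It says nothing about \emph{essential surjectivity}. To produce a preimage of a lisse $\mathbf{F}_p$-sheaf on $\mathrm{Spec}(S[1/t])$, one must know that its trivializing finite étale cover descends to $\mathrm{Spec}(R_\perf[1/t])$, i.e., one must compare étale fundamental groups --- a statement invisible to $\mathbf{F}_p$-cohomology of the constant sheaf (a $\mathbf{Z}/\ell$-cover with $\ell \neq p$ contributes to $\pi_1$ but not to $H^*(\mathbf{F}_p)$, and Artin--Schreier only controls the $p$-torsion covers). The paper supplies exactly this missing ingredient via the Elkik(-Gabber--Ramero) approximation theorem \cite[Proposition~5.4.54]{GabberRamero}, which shows $R[1/t]$ and $S[1/t]$ have isomorphic fundamental groups; your plan as stated would need to be supplemented by this (or some equivalent algebraization input) to obtain essential surjectivity.
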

\begin{proof}
The functor $a$ is an equivalence by Proposition~\ref{RHLocSys} and the topological invariance of the \'etale site. The full faithfulness of $b \circ a$ (and thus $b$) follows from \cite[Lemma 9.2]{BhattScholzePrisms} as perfect complexes over $R$ are automatically derived $t$-complete. Using Proposition~\ref{RHLocSys} for $S$ and the full faithfulness of $b \circ a$, it suffices to check that every lisse $\mathbf{F}_p$-sheaf on $\mathrm{Spec}(S[1/t])$ is pulled back from $\mathrm{Spec}(R[1/t])$. But this follows form the Elkik(-Gabber-Ramero) approximation \cite[Proposition 5.4.54]{GabberRamero}, which ensures that the $R[1/t]$ and $S[1/t]$ have isomorphic fundamental groups.
\end{proof}

\begin{corollary}[$\mathbf{Z}_p$-local systems as Laurent $F$-crystals]
\label{LocSysLaurentFCrys}
Let $X$ be a bounded $p$-adic formal scheme over $\mathbf{Z}_p$ with generic fibre $X_\eta$. Then extension of scalars gives an equivalence
\[ D_{\perf}(X_\Prism, \mathcal{O}_\Prism[1/\mathcal{I}_\Prism]^\wedge_p)^{\varphi=1} \simeq D_{\perf}(X_\Prism, \mathcal{O}_{\Prism,\perf}[1/\mathcal{I}_\Prism]^\wedge_p)^{\varphi=1}.\]
Moreover, the target identifies with the locally constant derived category
\[ D^{(b)}_{lisse}(X_\eta,\mathbf{Z}_p)\]
of the generic fibre $X_\eta$. 
\end{corollary}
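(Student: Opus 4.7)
My strategy splits the corollary into two halves, obtaining the first equivalence by applying Proposition~\ref{BanachPerfFrobMod} prism-by-prism, and the second identification by descending to the perfectoid qrsp case already handled in Example~\ref{PerfdLaurentCrys}.

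For the first claim, I would invoke Proposition~\ref{VBDescentPrism} (in the $D_{\perf}$ variant for $\mathcal{O}_\Prism[1/\mathcal{I}_\Prism]^\wedge_p$) to rewrite both $\infty$-categories as limits,
\[
D_{\perf}(X_\Prism,\mathcal{O}_\Prism[1/\mathcal{I}_\Prism]^\wedge_p)^{\varphi=1} \;=\; \lim_{(A,I)\in X_\Prism} D_{\perf}(A[1/I]^\wedge_p)^{\varphi=1},
\]
and analogously with $A_\perf$ in place of $A$ on the target. It then suffices to produce a functorial equivalence $D_{\perf}(A[1/I]^\wedge_p)^{\varphi=1}\simeq D_{\perf}(A_\perf[1/I]^\wedge_p)^{\varphi=1}$ for each bounded prism $(A,I)$. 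Since both rings are derived $p$-complete, a standard derived-Nakayama/Artin--Schreier argument shows that reduction mod $p$ is an equivalence on $\varphi=1$ perfect-complex categories; this reduces the claim to the mod-$p$ map $(A/p)[1/\bar t]\to((A/p)_\perf)^\wedge_{\bar t}[1/\bar t]$ with $\bar t$ a generator of $I$ mod $p$, which is exactly the content of Proposition~\ref{BanachPerfFrobMod}.

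For the second identification, I would proceed by descent, which is also where I expect the main technical difficulty to lie. Both assignments $X\mapsto D_{\perf}(X_\Prism,\mathcal{O}_{\Prism,\perf}[1/\mathcal{I}_\Prism]^\wedge_p)^{\varphi=1}$ and $X\mapsto D^{(b)}_{lisse}(X_\eta,\mathbf{Z}_p)$ are sheaves for a suitable topology on $X$: the former by the descent results of \S\ref{sec:PrismQsyn} (an analogue of Proposition~\ref{ComparePrismQSyn} in the qsyn case), and the latter by the v-descent recalled in the Notation box. Granting a common refinement of these topologies for general bounded $X$ (the delicate point when $X$ is not a priori quasi-syntomic), it suffices to treat the case $X=\mathrm{Spf}(R)$ with $R$ a qrsp perfectoid ring, a cover of any $X$ being furnished by the $A_{\inf}$-prism construction of Example~\ref{BKAinfPrismOK}(2). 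In that case Example~\ref{PerfdLaurentCrys} already identifies $D_{\perf}(\mathrm{Spf}(R)_\Prism,\mathcal{O}_{\Prism,\perf}[1/\mathcal{I}_\Prism]^\wedge_p)^{\varphi=1}$ with $D_{\perf}(W(R^\flat[1/I^\flat]))^{\varphi=1}$, and Proposition~\ref{RHLocSys} (bootstrapped from $\mathbf{F}_p$- to $\mathbf{Z}_p$-coefficients by derived $p$-completeness) combined with the tilting equivalence of \'etale sites $\mathrm{Spa}(R[1/p],R)_\et\simeq\mathrm{Spa}(R^\flat[1/I^\flat],R^\flat)_\et$ identifies this further with $D^b_{lisse}(X_\eta,\mathbf{Z}_p)$.
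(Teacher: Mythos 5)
Your proposal follows the same route as the paper for both halves: rewrite the first equivalence as a prism-by-prism statement, check it mod $p$, and obtain the second identification by descending to the perfectoid case handled in Example~\ref{PerfdLaurentCrys}.

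There is, however, an error in how you justify the mod-$p$ reduction. You claim that reduction mod $p$ \emph{is} an equivalence on the $\varphi=1$ perfect-complex categories, i.e.\ that $D_{\perf}(R)^{\varphi=1}\to D_{\perf}(R/p)^{\varphi=1}$ is an equivalence for $R = A[1/I]^{\wedge}_p$. This is false: for $R$ coming from a perfect prism over $\mathcal O_C$, the source computes lisse $\mathbf{Z}_p$-sheaves on $\mathrm{Spa}(C)$ (by tilting and Riemann--Hilbert, i.e.\ finite free $\mathbf{Z}_p$-modules), while the target computes lisse $\mathbf{F}_p$-sheaves, so they are not equivalent. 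What you actually need --- and what the paper's one-word appeal to ``devissage'' encodes --- is that both $A[1/I]^{\wedge}_p$ and $A_{\perf}[1/I]^{\wedge}_p$ are derived $p$-complete, so $D_{\perf}(-)$ is the inverse limit of the $D_{\perf}(-/p^n)$; the pullback functor between the $\varphi=1$ categories is therefore an equivalence if and only if its mod-$p$ reduction is one (by the usual induction on $n$ across the extensions $p^{n}/p^{n+1}$), and the mod-$p$ statement is precisely Proposition~\ref{BanachPerfFrobMod}. Your chain of reductions lands in the right place, but the intermediate assertion should be replaced by this devissage; as a false premise it does not actually yield the reduction. The remainder of the proposal, including the caveat about descent for a general bounded $X$, mirrors the level of detail in the paper's own (brief) invocation of ``Example~\ref{PerfdLaurentCrys} and descent.''
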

\begin{proof}
For the first part, by the definition of both sides as limits, it suffices to show the following: for any bounded prism $(A,I)$ with perfection $(B,J)$, the pullback
\[ D_{\perf}(A[1/I]^\wedge_p)^{\varphi=1} \to D_{\perf}(B[1/J]^\wedge_p)^{\varphi=1}\]
is an equivalence. By devissage, it is enough to show the same mod $p$, where it follows from Proposition~\ref{BanachPerfFrobMod}. The identification with $D^{(b)}_{lisse}(X_\eta,\mathbf{Z}_p)$ then follows from Example~\ref{PerfdLaurentCrys} and descent.
\end{proof}

\begin{corollary}[$\mathbf{Q}_p$-local systems as Laurent $F$-crystals up to isogeny]
\label{LocSysLaurentFCrysIsog}
Let $X$ be a bounded $p$-adic formal scheme with generic fibre $X_\eta$. Then there is a natural equivalence
\[ \mathrm{Vect}(X_\Prism, \mathcal{O}_\Prism[1/\mathcal{I}_\Prism]^\wedge_p)^{\varphi=1}\simeq \mathrm{Loc}_{\mathbf{Z}_p}(X_\eta).\]
Inverting $p$ gives a natural equivalence
\[ \mathrm{Vect}(X_\Prism, \mathcal{O}_\Prism[1/\mathcal{I}_\Prism]^\wedge_p)^{\varphi=1} \otimes_{\mathbf{Z}_p} \mathbf{Q}_p \simeq \mathrm{Loc}_{\mathbf{Z}_p}(X_\eta) \otimes_{\mathbf{Z}_p} \mathbf{Q}_p.\]
\end{corollary}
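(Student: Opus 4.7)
The plan is to deduce the integral equivalence from the derived equivalence in Corollary~\ref{LocSysLaurentFCrys} by matching the two natural ``hearts'' on both sides, and then obtain the rational statement by inverting $p$. On the prismatic side, $\mathrm{Vect}(X_\Prism, \mathcal{O}_\Prism[1/\mathcal{I}_\Prism]^{\wedge}_p)^{\varphi=1}$ sits inside $D_{\perf}(X_\Prism, \mathcal{O}_\Prism[1/\mathcal{I}_\Prism]^{\wedge}_p)^{\varphi=1}$ as the full subcategory of objects that are, locally on $X_\Prism$, finite projective modules concentrated in degree $0$. On the generic fibre side, $\mathrm{Loc}_{\mathbf{Z}_p}(X_\eta)$ embeds in $D^{(b)}_{lisse}(X_\eta,\mathbf{Z}_p)$ as the full subcategory of pro-\'etale locally constant $\mathbf{Z}_p$-sheaves that are concentrated in degree $0$ with finite free stalks. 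It thus suffices to check that the equivalence of Corollary~\ref{LocSysLaurentFCrys} matches these two subcategories.

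Both subcategories are characterized by conditions that are local for the quasi-syntomic topology on $X$: Proposition~\ref{VBDescentPrism} handles the prismatic side, and since any quasi-syntomic cover of $X$ induces a v-cover of $X_\eta$, v-descent for lisse sheaves on diamonds handles the other side. Therefore I would reduce to the case $X=\mathrm{Spf}(R)$ with $R$ perfectoid, where $(\Prism_R, I)$ is the initial object of $X_\Prism$ and $\Prism_R[1/I]^{\wedge}_p \simeq W(R^\flat[1/I^\flat])$. In this local setting, Example~\ref{PerfdLaurentCrys} identifies the derived target of the equivalence with $D^b_{lisse}(X_\eta, \mathbf{Z}_p)$.

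The remaining task is to identify a finite projective $W(R^\flat[1/I^\flat])$-module $M$ equipped with a Frobenius isomorphism $\varphi_M$ as an object of $\mathrm{Loc}_{\mathbf{Z}_p}(X_\eta)$ and vice versa. By derived $p$-completeness and the well-behavedness of finite projective modules under $W_n \to W_{n-1}$, such an $F$-module is the $p$-adic limit of its truncations modulo $p^n$. Each truncation, by the characteristic-$p$ Riemann--Hilbert correspondence (Proposition~\ref{RHLocSys}, specialized to the vector bundle heart as in \cite[Proposition 4.1.1]{Katzpadic}), corresponds to a finite locally free $\mathbf{Z}/p^n$-sheaf on $\mathrm{Spec}(R^\flat[1/I^\flat])$. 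Taking the inverse limit yields a finite locally free $\mathbf{Z}_p$-sheaf on $\mathrm{Spec}(R^\flat[1/I^\flat])_\et$, and the tilting equivalence identifies this with an object of $\mathrm{Loc}_{\mathbf{Z}_p}(X_\eta)$ compatibly with how the derived equivalence acts on the ambient $D^b_{lisse}(X_\eta, \mathbf{Z}_p)$.

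For the rational statement, one inverts $p$ on both sides of the integral equivalence; this is formal, using that $\mathbf{Q}_p$-local systems on $X_\eta$ are, up to isogeny, obtained from $\mathbf{Z}_p$-local systems, and similarly for Laurent $F$-crystals in vector bundles. The main subtlety in this plan lies in pinning down the Frobenius-invariant lifting through the Witt-vector tower in the perfectoid case and verifying it carries vector bundles to local systems bijectively; this is essentially a $\mathbf{Z}_p$-lift of Katz's Artin--Schreier argument, carried out compatibly with the already-established derived equivalence.
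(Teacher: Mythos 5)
Your proposal is correct and arrives at the same key inputs as the paper's terse proof (reduction to the perfectoid/perfect-prism local case, followed by the integral Artin--Schreier--Witt equivalence of Katz), but it packages the reduction differently. The paper simply re-runs the descent argument from Corollary~\ref{LocSysLaurentFCrys} with $\mathrm{Vect}(-)$ in place of $D_{\perf}(-)$ throughout: both sides are limits over $X_\Prism$, the term-wise comparison $\mathrm{Vect}(A[1/I]^\wedge_p)^{\varphi=1}\to\mathrm{Vect}(B[1/J]^\wedge_p)^{\varphi=1}$ (for $(B,J)$ the perfection) is insensitive to perfection by the vector bundle analogue of Proposition~\ref{BanachPerfFrobMod}, and the perfectoid computation is Katz. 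Your route instead takes the already-established derived equivalence as a scaffold and argues that it restricts to the vector bundle hearts on both sides, checking this quasi-syntomic locally and invoking Katz's $W_n$-coefficient theorem in the perfectoid case. This is a genuinely valid alternative and has the virtue of making explicit why the non-derived statement is a shadow of the derived one; the paper's route is slightly more direct in that it never needs to characterize the vector bundle heart inside $D_{\perf}$. One small caution: Proposition~\ref{RHLocSys} in the paper is stated only with $\mathbf{F}_p$-coefficients, so your appeal to it for $\mathbf{Z}/p^n$-truncations should really be routed through the $W_n$-version in Katz (which you do cite) or through devissage from the $\mathbf{F}_p$-case; as written the phrasing slightly conflates the two.
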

\begin{proof}
The second part follows from the first one by inverting $p$. The first part follows by the same argument used in Corollary~\ref{LocSysLaurentFCrys}.
\end{proof}

\begin{remark}
In the second equivalence of Corollary~\ref{LocSysLaurentFCrys}, both sides admit natural enlargements:
\begin{enumerate}
\item We have a fully faithful embedding
\[ \mathrm{Vect}(X_\Prism, \mathcal{O}_\Prism[1/\mathcal{I}_\Prism]^\wedge_p)^{\varphi=1} \otimes_{\mathbf{Z}_p} \mathbf{Q}_p \subset \mathrm{Vect}(X_\Prism,  \mathcal{O}_\Prism[1/\mathcal{I}_\Prism]^\wedge_p[1/p])^{\varphi=1}.\]
If $X$ is an $\mathbf{F}_p$-scheme, this embedding is an equivalence as both categories are trivial. In the mixed characteristic case, however, this embedding is essentially never an equivalence. Indeed, consider $X=\mathrm{Spf}(\mathcal{O}_C)$ for $C/\mathbf{Q}_p$ a complete and algebraically closed extension. In this case, $X_\Prism$ has an initial object $(A=A_{\inf}(\mathcal{O}_C),I)$ given by the perfect prism corresponding to $\mathcal{O}_C$. By Corollary~\ref{LocSysLaurentFCrys}, the category of Laurent $F$-crystals is trivial: taking $\varphi$-fixed points identifies the category $\mathrm{Vect}(X_\Prism, \mathcal{O}_\Prism[1/\mathcal{I}_\Prism]^\wedge_p)^{\varphi=1}$ with $\mathrm{Vect}(\mathbf{Z}_p)$. Consequently, the category $\mathrm{Vect}(X_\Prism, \mathcal{O}_\Prism[1/\mathcal{I}_\Prism]^\wedge_p)^{\varphi=1} \otimes_{\mathbf{Z}_p} \mathbf{Q}_p$ identifies with $\mathrm{Vect}(\mathbf{Q}_p)$. On the other hand, the category $\mathrm{Vect}(X_\Prism,  \mathcal{O}_\Prism[1/\mathcal{I}_\Prism]^\wedge_p[1/p])^{\varphi=1}$ identifies with the category $\mathrm{Vect}(W(C^\flat)[1/p])^{\varphi=1}$ of $F$-isocrystals on $C^\flat$; the latter is described by the Dieudonn\'e-Manin classification, and is much larger than $\mathrm{Vect}(\mathbf{Q}_p)$. In fact, the essential image of the latter inside $\mathrm{Vect}(W(C^\flat)[1/p])^{\varphi=1}$ is exactly the full subcategory of ``\'etale'' $F$-isocrystals, i.e., those $F$-isocrystals where all the Frobenius eigenvalues have slope $0$.

\item We have a fully faithful embedding
\[\mathrm{Loc}_{\mathbf{Z}_p}(X_\eta) \otimes_{\mathbf{Z}_p} \mathbf{Q}_p \subset \mathrm{Loc}_{\mathbf{Q}_p}(X_\eta)\]
into the category of (pro-\'etale) $\mathbf{Q}_p$-local systems on the generic fibre $X_\eta$. If $X=\mathrm{Spf}(\mathcal{O}_K)$ for a complete extension $K/\mathbf{Q}_p$, then this embedding is an equivalence as an \'etale cover of $\mathrm{Spa}(K,\mathcal{O}_K)$ admits sections over a finite \'etale cover of $K$. In general, however, this embedding is not essentially surjective. For instance, if $X$ is the $p$-completion of $\mathbf{P}^1_{\mathbf{Z}_p}$, then de Jong has constructed \cite[\S 7]{deJongEtalePi1} a $\mathbf{Q}_p$-local system $L$ on $X_\eta$ with monodromy group $\mathrm{SL}_2(\mathbf{Q}_p)$; no such $L$ can arise by inverting $p$ in a $\mathbf{Z}_p$-local system.

\end{enumerate}
We do not know a natural generalization of Corollary~\ref{LocSysLaurentFCrysIsog} that accommodates the preceding enlargements.
\end{remark}

\begin{remark}
For $X=\mathrm{Spf}(\mathcal{O}_K)$ with $K/\mathbf{Q}_p$ a discretely valued extension with perfect residue field, Corollary~\ref{LocSysLaurentFCrysIsog} was proven independently by Zhiyou Wu \cite{WuPrismaticGalois}. This proof was extended to the case of smooth formal schemes over $\mathcal{O}_K$ by \cite{MinWangLaurent}.
\end{remark}

\begin{remark}[Local systems on the special fibre]
A simpler version of the reasoning used to establish Corollary~\ref{LocSysLaurentFCrys} shows that there is a natural equivalence
\[ D_{\perf}(X_\Prism,\mathcal{O}_\Prism)^{\varphi=1} \simeq D^b_{lisse}(X_{p=0}, \mathbf{Z}_p),\]
where the scheme $X_{p=0} := X \times_{\mathrm{Spf}(\mathbf{Z}_p)} \mathrm{Spec}(\mathbf{F}_p)$.
\end{remark}

\newpage
\section{Prismatic $F$-crystals}

Given a $p$-adic formal scheme $X$, following Proposition~\ref{VBDescentPrism}, we have the notion of a vector bundle on $(X_\Prism,\mathcal{O}_\Prism)$:
\[ \mathrm{Vect}(X_\Prism, \mathcal{O}_\Prism) := \lim_{(A,I) \in X_\Prism} \mathrm{Vect}(A).\]
In this section, we introduce the notion of a prismatic $F$-crystal (Definition~\ref{PrismaticFCrysDef}), which is a vector bundle as above equipped with Frobenius structure that is reminiscent of the notion of a shtuka. We then give some examples, and construct certain ``realization'' functors (Constructions~\ref{EtaleRealize} and \ref{CrysdRRealize}), extracting concrete invariants from the somewhat elaborate structure of a prismatic $F$-crystal.

\begin{definition}[Prismatic $F$-crystals of vector bundles]
\label{PrismaticFCrysDef}
For any $p$-adic formal scheme $X$, let $\mathrm{Vect}^{\varphi}(X_\Prism,\mathcal{O}_\Prism)$ denote the category of {\em prismatic $F$-crystals (of vector bundles) on $X_\Prism$}, i.e., vector bundles $\mathcal{E}$ on $(X_\Prism,\mathcal{O}_\Prism)$ equipped with an identification $\varphi_{\mathcal{E}}: \varphi^* \mathcal{E}[1/\mathcal{I}_\Prism] \simeq \mathcal{E}[1/\mathcal{I}_\Prism]$. If $\varphi_{\mathcal{E}}$ carries $\varphi^* \mathcal{E}$ into $\mathcal{E}$, then we say that $(\mathcal{E},\varphi_{\mathcal{E}})$ is {\em effective}. Taking tensor products yields a (rigid) symmetric monoidal structure on $\mathrm{Vect}^{\varphi}(X_\Prism,\mathcal{O}_\Prism)$.

More generally, we make a similar definition for $\mathrm{Vect}^{\varphi}(X_\Prism, \mathcal{O}')$ where $\mathcal{O}'$ is a sheaf of $\mathcal{O}_\Prism$-algebras equipped with a compatible Frobenius. Similary, if $(A,I)$ is a prism, one has an evident category $\mathrm{Vect}^{\varphi}(A)$ of prismatic $F$-crystals of vector bundles on $A$. 
\end{definition}

\begin{remark}[Prismatic $F$-crystals of perfect complexes]
For a $p$-adic formal scheme $X$, there is an evident notion of a prismatic $F$-crystal of perfect complexes on $X_\Prism$: it is given by an object $\mathcal{E} \in D_{\perf}(X_\Prism,\mathcal{O}_\Prism)$ equipped with an identification $\varphi_{\mathcal{E}}: \varphi^* \mathcal{E}[1/\mathcal{I}_\Prism] \simeq \mathcal{E}[1/\mathcal{I}_\Prism]$. Any prismatic $F$-crystal of vector bundles gives a prismatic $F$-crystal in perfect complexes; but the derived notion also accommodates other examples, such as prismatic $F$-crystals of vector bundles on $(X_\Prism,\mathcal{O}_\Prism/p^n)$ when $X$ is $p$-torsionfree. While this notion does not play a serious role in this paper, we shall use it to make some remarks.
\end{remark}

\begin{example}[Breuil-Kisin modules]
Let $K/\mathbf{Q}_p$ be a discretely valued extension with perfect residue field $k$. Choose a uniformizer $\pi \in \mathcal{O}_K$. As in Example~\ref{BKAinfPrismOK} (1), we obtain a Breuil-Kisin prism  $(A,I) = (\mathfrak{S},(E(u))) \in \mathrm{Spf}(\mathcal{O}_K)_\Prism$. An object of the category $\mathrm{Vect}^{\varphi}(A)$ is traditionally called a Breuil-Kisin module, and was studied in depth in \cite{Kisin}. There is an obvious realization functor from prismatic $F$-crystals over $\mathcal{O}_K$ towards Breuil-Kisin modules. This functor as well as the relationship of either side with crystalline Galois representations will be discussed further in \S \ref{sec:BKCrys}. 
\end{example}

\begin{example}[Laurent $F$-crystals as prismatic $F$-crystals]
The category $\mathrm{Vect}^{\varphi}(X_\Prism, \mathcal{O}_\Prism[1/\mathcal{I}_\Prism]^{\wedge}_p)$ coincides with the category $\mathrm{Vect}(X_\Prism,\mathcal{O}_\Prism[1/\mathcal{I}_\Prism]^{\wedge}_p)^{\varphi=1}$ from Definition~\ref{LaurentFCrysDef}. Indeed, $\mathcal I_\Prism$ is already inverted in the sheaf of rings $\mathcal{O}_\Prism[1/\mathcal{I}_\Prism]^{\wedge}_p$.
\end{example}

\begin{example}[Breuil-Kisin twists]
\label{ex:BKT}
In this example, we explain the notion of a Breuil--Kisin twist, which plays the role of the Tate twist in the world of prismatic $F$-crystals. For more details including explicit constructions over certain perfectoid rings, we refer to \cite[Example 4.2, Example 4.24]{BMS1}, \cite[\S 4.9]{DrinfeldPrismatization}, and \cite[\S 2]{BhattLurieAPC}.

Take $X=\mathrm{Spf}(\mathbf{Z}_p)$, so $X_\Prism$ is the category of all bounded prisms. There is a naturally defined prismatic $F$-crystal $(\mathcal{O}_\Prism\{1\},\varphi)$, called the Breuil-Kisin twist, on $X$ whose underlying $\mathcal{O}_\Prism$-module is invertible; by pullback, one obtains a similar prismatic $F$-crystal over any bounded $p$-adic formal scheme. Informally, the invertible $\mathcal{O}_\Prism$-module is given by the following formula:
\[ \mathcal{O}_\Prism\{1\} := \mathcal{I}_\Prism \otimes \varphi^* \mathcal{I}_\Prism \otimes (\varphi^2)^* \mathcal{I}_\Prism \otimes .... = \bigotimes_{i \geq 0} (\varphi^i)^* \mathcal{I}_\Prism.\]
More precisely, to make sense of the infinite tensor product, one observes that for any bounded prism $(A,I)$, one first checks\footnote{Alternately, one can also show that $(\varphi^n)^* I$ is canonically trivial modulo $I^{p^n}$ using Joyal's operations; see the footnote appearing in the proof of Lemma~\ref{PrismaticLogCech}.} that for $n\geq r$, the invertible $A$-module $(\varphi^n)^* I$ is canonically trivialized (by the generator $p$) after base change along $A \to A/I_r$, where $I_r = \prod_{i = 0}^{r-1} \varphi^{i}(I) \subset A$; the infinite tensor product then makes sense in view of the natural equivalence $\mathcal{P}\mathrm{ic}(A) \simeq \lim_r \mathcal{P}\mathrm{ic}(A/I_r)$ of groupoids (which can be proven as in \cite[Proposition 2.2.12]{BhattLurieAPC}). It follows that there is a natural isomorphism
\[ \varphi^* \mathcal{O}_\Prism\{1\} \simeq \mathcal{I}_\Prism^{-1} \mathcal{O}_\Prism\{1\},\]
which gives the $F$-crystal structure. 
\end{example}

\begin{example}[Gauss-Manin prismatic $F$-crystals]
\label{ex:GM}
Let $f:X \to Y$ be a proper smooth map of $p$-adic formal schemes. Then $\mathcal{E}_X := Rf_* \mathcal{O}_\Prism$ is naturally an effective prismatic $F$-crystal in perfect complexes on $Y_\Prism$: the perfect crystal property follows from the Hodge-Tate comparison \cite[Theorem 1.8 (2)]{BhattScholzePrisms} and flat base change for coherent cohomology, while the effective $F$-crystal structure comes from the isogeny theorem \cite[Theorem 1.15 (4)]{BhattScholzePrisms}.  One can then pass to cohomology and obtain prismatic $F$-crystals in vector bundles under favorable conditions. For instance, if $Y = \mathrm{Spf}(\mathcal{O}_K)$ for a complete extension $K/\mathbf{Q}_p$ with residue field $k$  and if the special fibre $X_k$ has torsionfree crystalline cohomology, then each cohomology sheaf of $\mathcal{E}_X$  gives a prismatic $F$-crystal in vector bundles by \cite[Theorem 14.5]{BMS1}. The $F$-crystal from Example~\ref{ex:BKT} then admits a simple geometric description: we have $\mathcal{O}_\Prism\{-1\} \simeq H^2(\mathcal{E}_{\mathbf{P}^1})$ via the first Chern class map, see \cite{BhattLurieAPC}.
\end{example}

\begin{example}[Prismatic $F$-crystals on schemes of characteristic $p$]
\label{ex:CrystallineFCrys}
Let $X$ be a quasi-syntomic $\mathbf{F}_p$-scheme. Then it is known that the category of crystals of vector bundles on $X_\Prism$ and $X_{\crys}$ are naturally identified in a Frobenius equivariant fashion: one reduces by descent to the case where $X=\mathrm{Spf}(R)$ for $R$ qrsp, where the claim follows from the crystalline comparison $\Prism_R \simeq A_{\crys}(R)$ for prismatic cohomology. Moreover, we have $\mathcal{I}_\Prism = p\mathcal{O}_\Prism$ by the irreducibility lemma on distinguished elements. Consequently, the notion of a $F$-crystal on $X_\Prism$ coincides with the classical notion of an $F$-crystal on $X_{\crys}$.
\end{example}

\begin{construction}[The \'etale realization]
\label{EtaleRealize}
For any $p$-adic formal scheme $X$, the $p$-completed base change gives a symmetric monoidal functor
\[ T:\mathrm{Vect}^{\varphi}(X_\Prism,\mathcal{O}_\Prism) \to \mathrm{Vect}(X_\Prism,\mathcal{O}_\Prism[1/\mathcal{I}_\Prism]^\wedge_p)^{\varphi=1} \simeq \mathrm{Loc}_{\mathbf{Z}_p}(X_\eta),\]
where we use Corollary~\ref{LocSysLaurentFCrys} for the last isomorphism. We refer to this functor as the \'etale realization functor. For future reference, we remark that this functor makes sense not only for prismatic $F$-crystals of vector bundles, but in fact for prismatic $F$-crystals of perfect complexes provided we replace the target with the derived category from Corollary~\ref{LocSysLaurentFCrys}.
\end{construction}

\begin{example}[Relating Breuil-Kisin and Tate twists]
Using the $q$-logarithm \cite{ALBqLog} and descent (or directly the prismatic logarithm \cite{BhattLurieAPC}), one can show that the \'etale realization carries the Breuil-Kisin twists from Example~\ref{ex:BKT} to usual Tate twists, i.e., that $T(\mathcal{O}_\Prism\{i\}) = \mathbf{Z}_p(i)$ for any $i$ and any bounded $p$-adic formal scheme $X$. More generally, in the context of Example~\ref{ex:GM}, the Artin-Schreier sequence on $X_{\eta,proet}$ can be used to show that $T$ commutes with proper smooth pushforwards.
\end{example}

\begin{example}[$F$-crystals over a qrsp ring]
\label{FCrysLaurentFCrysPerfectoid}
Let $X = \mathrm{Spf}(R)$ for $R$ a qrsp ring. Then $X_\Prism$ has an initial object determined by the prism $(\Prism_R,I)$ of $R$. Consequently, we can identify the category of $F$-crystals explicitly:
\[  \mathrm{Vect}^{\varphi}(X_\Prism,\mathcal{O}_\Prism) = \mathrm{Vect}^{\varphi}(\Prism_R) := \{ (E,\varphi) \mid E \in \mathrm{Vect}(\Prism_R), \  \varphi:(F^* E)[1/I] \simeq E[1/I] \}.\]
Combining this with the analogous equivalence for Laurent $F$-crystals in Example~\ref{PerfdLaurentCrys} as well as Lemma~\ref{PrismQRSPTrans} below, we conclude that the \'etale realization functor
\[ T:\mathrm{Vect}^{\varphi}(X_\Prism,\mathcal{O}_\Prism) \to \mathrm{Vect}(X_\Prism,\mathcal{O}_\Prism[1/\mathcal{I}_\Prism]^\wedge_p)^{\varphi=1} \simeq \mathrm{Loc}_{\mathbf{Z}_p}(X_\eta)\]
is faithful if $R$ is $p$-torsionfree.
\end{example}

\begin{lemma}
\label{PrismQRSPTrans}
Let $R$ be a $p$-torsionfree qrsp ring with prism $(\Prism_R,I)$. Then $\Prism_R \to \Prism_R[1/I]^{\wedge}$ is injective, and hence the same holds true on tensoring with any finite projective $\Prism_R$-module $M$.
\end{lemma}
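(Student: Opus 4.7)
The second claim follows from the first by flatness: any finite projective $\Prism_R$-module is flat, so tensoring such a module against an injection of $\Prism_R$-modules preserves injectivity. I therefore focus on proving that $\Prism_R \hookrightarrow \Prism_R[1/I]^\wedge_p$ is injective.

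\textbf{Key algebraic inputs.} Choose a local generator $d$ for $I$; by the prism axioms, $d$ is a non-zero-divisor in $\Prism_R$. The hypothesis that $R = \Prism_R/d$ is $p$-torsionfree implies trivially that $R$ has bounded $p^\infty$-torsion, so $(\Prism_R, I)$ is a bounded prism, and hence the derived $(p, I)$-completeness demanded by the prism axioms upgrades to classical $(p, I)$-completeness; in particular $\bigcap_n (p, d)^n \Prism_R = 0$. From this one proves (a) $p$ is a non-zero-divisor in $\Prism_R$, and (b) $d$ is a non-zero-divisor in $\Prism_R/p^n$ for every $n \geq 1$, both by direct elementary manipulation. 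For (a): if $px = 0$, reducing modulo $d$ gives $p\bar x = 0$ in $R$, so $\bar x = 0$, whence $x = dy$; then $d(py) = 0$ forces $py = 0$ since $d$ is a non-zero-divisor, and iterating this process places $x$ in $\bigcap_n d^n \Prism_R \subseteq \bigcap_n (p, d)^n \Prism_R = 0$. For (b) in the case $n = 1$: from $dy = pz$, reducing modulo $d$ gives $p\bar z = 0$ in $R$, so $z = dw$, and then $d(y - pw) = 0$ forces $y = pw$. The general case propagates by induction on $n$ using the short exact sequence $0 \to \Prism_R/p \xrightarrow{\cdot p^n} \Prism_R/p^{n+1} \to \Prism_R/p^n \to 0$, which is exact precisely because of (a).

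\textbf{Conclusion.} Rewrite $\Prism_R[1/I]^\wedge_p = \lim_n (\Prism_R/p^n)[1/d]$. By (b), each localization map $\Prism_R/p^n \hookrightarrow (\Prism_R/p^n)[1/d]$ is injective, so left-exactness of $\lim$ yields an injection $\lim_n \Prism_R/p^n \hookrightarrow \Prism_R[1/I]^\wedge_p$. Further, $\Prism_R \to \lim_n \Prism_R/p^n$ is itself injective because $\bigcap_n p^n \Prism_R \subseteq \bigcap_n (p, d)^n \Prism_R = 0$. Composing produces the required injection. The one technical subtlety I anticipate is ensuring that boundedness genuinely upgrades derived to classical $(p, I)$-adic completeness of $\Prism_R$; this is exactly where the assumption that $R$ is honestly (not merely derived) $p$-torsionfree enters, via the standard result that a derived $(p, I)$-complete ring whose quotient by $I$ has bounded $p^\infty$-torsion is classically $(p, I)$-complete.
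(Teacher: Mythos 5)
There is a genuine gap in your argument, and it appears right at the start: you write ``$R = \Prism_R/d$,'' but this identification is false for a general qrsp ring. The quotient $\Prism_R/I$ is the Hodge--Tate ring $\overline{\Prism_R}$, which maps to $R$ but is strictly larger than $R$ whenever $R$ is not perfectoid (its conjugate filtration has associated graded pieces given by divided wedge powers of the flat $R$-module $L_{R/\mathbf{Z}_p}[-1]$, so there are higher graded pieces besides $R$ itself). All of your steps (a), (b) and the boundedness assertion then argue with ``$R$'' where they should argue with $\overline{\Prism_R}$. So the crucial input you are missing is the deduction that $\overline{\Prism_R}$ is $p$-torsionfree from the hypothesis that $R$ is: this is precisely what the paper invokes under the name ``Hodge--Tate comparison,'' since for $R$ qrsp the conjugate filtration exhibits $\overline{\Prism_R}$ as built from flat $R$-modules, and flat modules over a $p$-torsionfree ring are $p$-torsionfree. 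Without this step, the base case of your inductive argument (reducing $px=0$ or $dy=pz$ modulo $d$ and invoking $p$-torsionfreeness of the quotient) is unjustified.

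Once that step is supplied, your strategy is sound and genuinely more hands-on than the paper's. You verify transversality of $(\Prism_R, I)$ directly by elementary manipulation, then write $\Prism_R[1/I]^\wedge_p = \lim_n (\Prism_R/p^n)[1/d]$ and conclude by left-exactness of limits. The paper instead applies derived Nakayama to the cofibre of $\Prism_R \to \Prism_R[1/I]^\wedge_p$ to reduce immediately to the mod-$p$ statement, and then cites transversality as a black box. Your route buys a self-contained proof of transversality from the $(p,I)$-adic separatedness, at the cost of more bookkeeping; but you still need the Hodge--Tate comparison to get the starting $p$-torsionfreeness of $\overline{\Prism_R}$, so there is no way to make the argument avoid that ingredient.
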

\begin{proof}
By using the derived $p$-completeness of the cofibre of the map and derived Nakayama, it is enough to show that
\[ \Prism_R/p \to \Prism_R/p[1/I]\]
is injective. This follows from the fact that $(\Prism_R,I)$ is a transversal prism (i.e., $(p,I)$ form a regular sequence, see \cite{ALBPrismaticDieudonne} for the terminology), which itself is a consequence of the Hodge-Tate comparison and the $p$-torsionfreeness of $R$.
\end{proof}

\begin{construction}[The crystalline and de Rham realizations]
\label{CrysdRRealize}
Let $X$ be a quasi-syntomic $p$-adic formal scheme, assumed $\mathbf{Z}_p$-flat for simplicity. By Example~\ref{ex:CrystallineFCrys}, the category of prismatic $F$-crystals on $X_{p=0}$ identifies with the category $\mathrm{Vect}^{\varphi}(X_{p=0,crys})$ of $F$-crystals on the crystalline site $X_{p=0,crys}$. Pullback along $X_{p=0} \to X$ thus yields the ``crystalline realization'' functor
\[ D_{\crys}:\mathrm{Vect}^{\varphi}(X_\Prism,\mathcal{O}_\Prism) \to \mathrm{Vect}^{\varphi}(X_{p=0,crys}).\]
Noting that crystals on $X_{\crys}$ and $X_{p=0,crys}$ are identified (as $X_{p=0} \subset X$ is a pro-PD thickening), forgetting the Frobenius also yields the ``de Rham realization'' functor
\[ D_{\dR}:\mathrm{Vect}^{\varphi}(X_\Prism,\mathcal{O}_\Prism) \to \mathrm{Vect}(X_{\crys}).\]
If $X$ is formally smooth over some base $Y$, then we can further pass to relative crystalline sites to obtain the ``relative de Rham realization'' functor
\[ D_{dR,Y}:\mathrm{Vect}^{\varphi}(X_\Prism,\mathcal{O}_\Prism) \to \mathrm{Vect}((X/Y)_{\crys}) \simeq \mathrm{Vect}^{\nabla}_{nil}(X/Y),\]
where the target denotes the category of vector bundles on $X$ equipped with a flat connection relative to $Y$ such that the connection is topologically quasi-nilpotent. 
\end{construction}

\newpage
\section{Prismatic $F$-crystals over $\mathrm{Spf}(\mathcal{O}_K)$: formulation of the main theorem}
\label{sec:FCrysOK}

Fix a complete discretely valued extension $K/\mathbf{Q}_p$ with perfect residue field $k$.   In this section, we first explain why prismatic $F$-crystals over $\mathrm{Spf}(\mathcal{O}_K)$ give rise to crystalline Galois representations upon \'etale realization (Proposition~\ref{FCrystoCrysGal}). Using this, we then formulate our main comparison theorem with crystalline Galois representations (Theorem~\ref{MainThm}) as an equivalence of categories; the full faithfulness is proven in this section using the easy direction of Fargues' classification of $F$-crystals over $\Prism_{\mathcal{O}_C}$ (Theorem~\ref{FarguesShtukaClassify}; see also Remark~\ref{FFviaBK}), while the essential surjectivity is the subject of \S \ref{sec:EssSurj}.

\begin{notation}[The $A_{\inf}$-prism and associated period rings]
Fix a completed algebraic closure $C/K$. Using the unique $F$-equivariant splitting\footnote{To construct this splitting explictly, choose $n \gg 0$ such that the $n$-fold Frobenius on $\mathcal{O}_K/p$ factors as $\mathcal{O}_K/p \xrightarrow{can} k \xrightarrow{\varphi_n} \mathcal{O}_K/p$; the splitting is then given by $k \xrightarrow{\varphi_n \circ \varphi^{-n}} \mathcal{O}_K/p$.} $k \to \mathcal{O}_K/p$ of $\mathcal{O}_K/p \to k$, we regard $\mathcal{O}_K$ and $\mathcal{O}_C$ (as well as all subsequently appearing rings) as $W(k)$-algebras. Let $X = \mathrm{Spf}(\mathcal{O}_K)$ and $Y=\mathrm{Spf}(\mathcal{O}_C)$; write $Y^{\bullet/X}$ denote the Cech nerve of $Y \to X$.  Associated to this data, one has some standard period rings and elements, that we introduce next. 

\begin{itemize}
\item  Write $A_{\inf} = A_{\inf}(\mathcal{O}_C)$. Choose a compatible system $\underline{\epsilon}=(1,\epsilon_p,\epsilon_{p^2},...)$ of $p$-power roots of $1$ in $C$ gives rise to the standard elements $q=[\epsilon]$, $\mu=q-1$, and $\tilde{\xi} = [p]_q$ of $A_{\inf}$. Moreover, we have a natural surjection $\tilde{\theta}:A_{\inf} \to \mathcal{O}_C$ with kernel $(\tilde{\xi})$,  that we use to identify $(A_{\inf},(\tilde{\xi}))$ as the perfect prism attached to $\mathcal{O}_C$, so $A_{\inf} = \Prism_{\mathcal{O}_C}$. We shall refer to the point of $\mathrm{Spec}(A_{\inf})$ determined by $A_{\inf} \xrightarrow{\tilde{\theta}} \mathcal{O}_C \to C$ as the Hodge-Tate point. Note that $\mu \in A_{\inf}$ maps to the non-zero element $\epsilon_p - 1 \in C$ in the residue field at the Hodge-Tate point.

\item Write $A_{\crys} = D_{(\varphi^{-1}([p]_q))}(A_{\inf}) = A_{\inf}\{\frac{[p]_q}{p}\}$ for the PD-envelope of $\theta=\tilde{\theta}\circ \varphi$, $B_{\crys}^+ = A_{\crys}[1/p]$, and $B_{\crys} = A_{\crys}[1/\mu]$\footnote{Observe that $B_{\crys} = B_{\crys}^+[1/\mu]$, i.e., $p$ is invertible in $A_{\crys}[1/\mu]$. Indeed, we have $(q-1)^{p-1} \equiv [p]_q \mod p A_{\inf}$, so $(q-1)^{p-1}/p \in A_{\crys}$, whence inverting $\mu=q-1$ also inverts $p$.}; in prismatic terms, the $A_{\inf}$-algebra $A_{\crys}$ identifies with the $\Prism_{\mathcal{O}_C}$-algebra $ \Prism_{\mathcal{O}_C}\{\frac{[p]_q}{p}\} = \Prism_{\mathcal{O}_C/p}$, see e.g.~\cite[Corollary 2.39, Proposition 7.10]{BhattScholzePrisms}.

\item Write $B_{\dR}^+$ for the formal completion of $A_{\inf}[1/p]$ at $\tilde{\theta}$; this is a complete DVR with residue field $C$, uniformizer $[p]_q$ and fraction field $B_{\dR} = B_{\dR}^+[1/[p]_q]$.  We have a natural $A_{\inf}$-algebra map $\varphi^\ast B_{\crys}^+ \to B_{\dR}^+$. This maps $\mu\in B_{\crys}^+$ to $\varphi(\mu)\in B_{\dR}^+$; since $\tilde{\theta}(\mu) \neq 0 \in C$, the image $\varphi(\mu)=[p]_q \mu$ has the form $[p]_q u$ for a unit $u$, so we also have an induced $A_{\inf}$-algebra map $\varphi^\ast B_{\crys} \to B_{\dR}$.
\end{itemize}
\end{notation}

Our goal is to understand the category $\mathrm{Vect}^{\varphi}(X_\Prism,\mathcal{O}_\Prism)$ of prismatic $F$-crystals on $X$. We shall do so via descent along $Y \to X$. For this reason, we shall need the easy direction of the following result:

\begin{theorem}[Fargues]
\label{FarguesShtukaClassify}
The category $\mathrm{Vect}^{\varphi}(Y_\Prism,\mathcal{O}_\Prism)$ is identified with the category of pairs $(T,F)$, where $T$ is a finite free $\mathbf{Z}_p$-module and $F$ is a $B_{\dR}^+$-lattice in $T \otimes_{\mathbf{Z}_p} B_{\dR}$.
\end{theorem}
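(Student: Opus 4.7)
The plan is to exploit the fact that $(A_{\inf}, (\tilde{\xi}))$ is the initial object of $Y_\Prism$, as follows from Example~\ref{BKAinfPrismOK} (2) since $\mathcal{O}_C$ is perfectoid (hence qrsp). By Proposition~\ref{VBDescentPrism}, this reduces the theorem to identifying $\mathrm{Vect}^{\varphi}(A_{\inf})$---the category of finite projective $A_{\inf}$-modules $M$ with an isomorphism $\varphi^* M[1/\tilde{\xi}] \simeq M[1/\tilde{\xi}]$---with the category of pairs $(T,F)$ as in the statement.

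To construct the functor $M \mapsto (T,F)$, I would use the identification $W(C^\flat) \simeq A_{\inf}[1/\mu]^\wedge_p$, which holds because $\mu^{p-1}$ equals $\tilde{\xi}$ modulo $p$ up to a unit (reflecting the distinguishedness of $\tilde{\xi}$). After base change to $W(C^\flat)$, the element $\tilde{\xi}$ becomes a unit, so $M \otimes_{A_{\inf}} W(C^\flat)$ is a genuine $\varphi$-module over $W(C^\flat)$. Proposition~\ref{RHLocSys} applied to the algebraically closed field $C^\flat$ then yields $T := (M \otimes_{A_{\inf}} W(C^\flat))^{\varphi=1}$ as a finite free $\mathbf{Z}_p$-module of the same rank as $M$, with $T \otimes_{\mathbf{Z}_p} W(C^\flat) \simeq M \otimes_{A_{\inf}} W(C^\flat)$. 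Setting $F := M \otimes_{A_{\inf}} B_{\dR}^+$ and using the natural map $W(C^\flat) \to B_{\dR}$ (well-defined since $\mu$ is a unit in $B_{\dR}^+$ and $p$ is already invertible there), I would identify $M \otimes_{A_{\inf}} B_{\dR} \simeq T \otimes_{\mathbf{Z}_p} B_{\dR}$, exhibiting $F$ as a $B_{\dR}^+$-lattice therein. This gives the ``easy direction'' that is actually needed later in the paper.

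For the inverse equivalence, I would appeal to a Beauville-Laszlo-type gluing for $A_{\inf}$ along the Hodge-Tate divisor $V(\tilde{\xi})$: finite projective $A_{\inf}$-modules are classified by triples $(N_\infty, N_0, \alpha)$ consisting of a finite projective $W(C^\flat)$-module $N_\infty$, a $B_{\dR}^+$-lattice $N_0$, and a gluing isomorphism $\alpha:N_\infty \otimes_{W(C^\flat)} B_{\dR} \simeq N_0 \otimes_{B_{\dR}^+} B_{\dR}$, subject to a Hartogs-type extension property across the closed locus $V(p, [\varpi^\flat]) \subset \mathrm{Spec}(A_{\inf})$ (Kedlaya's theorem on vector bundles over $A_{\inf}$). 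Incorporating the $\varphi$-structure, only $N_\infty$ carries Frobenius (the formal neighborhood at $V(\tilde{\xi})$ is not preserved by $\varphi$), and Artin-Schreier rewrites $N_\infty$ as $T \otimes_{\mathbf{Z}_p} W(C^\flat)$ for a unique finite free $\mathbf{Z}_p$-module $T$; the triple thus reduces to a pair $(T, F)$. The main obstacle will be establishing the Beauville-Laszlo description together with the Hartogs-type extension across the closed locus; this is the substantive input from the theory of the Fargues-Fontaine curve, whereas the $\varphi$-equivariant simplification to $(T,F)$ is formal once that input is in place.
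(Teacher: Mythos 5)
Your overall reduction is correct: since $\mathcal O_C$ is perfectoid, $(A_{\inf},(\tilde\xi))$ is initial among prisms over $\mathcal O_C$ (hence gives the final object of the topos $\mathrm{Shv}(Y_\Prism)$), so by Proposition~\ref{VBDescentPrism} the problem is indeed about classifying BKF-modules $(M,\varphi_M)$ over $A_{\inf}$. Your sketch of the inverse equivalence via Beauville-Laszlo along $V(\tilde\xi)$ combined with Kedlaya's theorem on vector bundles over $A_{\inf}$ is also in line with the cited references (the paper itself defers the proof of the full equivalence to \cite[Remark 4.29]{BMS1} and \cite[Lecture XIV]{Berkeley}).

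However, there is a genuine error in the construction of the lattice $F$. You set $F:= M\otimes_{A_{\inf}}B_{\dR}^+$. Recall that $B_{\dR}^+$ here is the $\tilde\theta$-adic completion of $A_{\inf}[1/p]$, and that $\mu$ is a \emph{unit} in $B_{\dR}^+$ (its image $\epsilon_p-1$ in $C$ is nonzero). Since the Artin--Schreier isomorphism already descends to $M[1/\mu]\simeq T\otimes_{\mathbf Z_p}A_{\inf}[1/\mu]$ by \cite[Lemma 4.26]{BMS1}, base change to $B_{\dR}^+$ gives $M\otimes_{A_{\inf}}B_{\dR}^+\simeq T\otimes_{\mathbf Z_p}B_{\dR}^+$ canonically. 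Thus your $F$ is always the \emph{trivial} lattice $T\otimes_{\mathbf Z_p}B_{\dR}^+$: the Frobenius $\varphi_M$ enters nowhere, and the proposed functor sends every $(M,\varphi_M)$ to $(T,T\otimes B_{\dR}^+)$, so it cannot be an equivalence (e.g.\ the Breuil-Kisin twist $\mathcal O_\Prism\{1\}$ and the trivial $F$-crystal would be sent to the same pair). The correct lattice, as in the paper, is $F:=\varphi^*M\otimes_{A_{\inf}}B_{\dR}^+$, which is \emph{not} canonically identified with $T\otimes B_{\dR}^+$ because $\varphi^*M[1/\tilde\xi]\simeq M[1/\tilde\xi]$ only away from the Hodge--Tate divisor, and only $\mu$ (not $\varphi^{-1}(\mu)$) is invertible at $\tilde\theta$. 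Separately, note that there is no natural ring map $W(C^\flat)\to B_{\dR}$ as you invoke: $W(C^\flat)=A_{\inf}[1/\mu]^\wedge_p$ is $p$-adically complete while $p$ is a unit in $B_{\dR}$. The correct logic, as in BMS1, is to first descend the $W(C^\flat)$-trivialization to $A_{\inf}[1/\mu]$ and then base change to $B_{\dR}^+$.
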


We content ourselves by explaining the construction of $(T,F)$ attached to $\mathcal{E} \in \mathrm{Vect}^{\varphi}(Y_\Prism,\mathcal{O}_\Prism)$. For an elementary proof of full faithfulness (which is the only part we use), we refer to \cite[Remark 4.29]{BMS1}. For the essential surjectivity, see \cite[Lecture XIV]{Berkeley}.

\begin{proof}[Construction of the functor in Fargues' theorem]
Fix $\mathcal{E} \in \mathrm{Vect}^{\varphi}(Y_\Prism,\mathcal{O}_\Prism)$. As $Y_\Prism$ has a final object determined by $A_{\inf} = \Prism_{\mathcal{O}_C}$, we can regard $\mathcal{E}$ as a BKF-module $(M,\varphi_M)$ over $A_{\inf}$ in the sense of \cite[\S 4.3]{BMS1}. The \'etale realization $T(\mathcal{E})$ is the finite free $\mathbf{Z}_p$-module $T := (M \otimes_{A_{\inf}} W(C^\flat))^{\varphi=1}$. By Artin-Schreier theory (see Corollary~\ref{LocSysLaurentFCrysIsog}), one has a natural isomorphism $M \otimes_{A_{\inf}} W(C^\flat) \simeq T \otimes_{\mathbf{Z}_p} W(C^\flat)$ extending the identity on $T$ on $(-)^{\varphi=1}$. Moreover, one can show (\cite[Lemma 4.26]{BMS1}) that this isomorphism restricts to an isomorphism $M[1/\mu] \simeq T \otimes_{\mathbf{Z}_p} A_{\inf}[1/\mu]$. As $\mu$ is invertible in the residue field at the Hodge-Tate point, we obtain an isomorphism $M \otimes_{A_{\inf}} B_{\dR}^+ \simeq T \otimes_{\mathbf{Z}_p} B_{\dR}^+$. The Frobenius on $M$ then yields the $B_{\dR}^+$-lattice $F := \varphi^* M \otimes_{A_{\inf}} B_{\dR}^+$ in $T \otimes_{\mathbf{Z}_p} B_{\dR}$. Thus, we obtain the pair $(T,F)$ mentioned in the theorem.
\end{proof}

We begin  our study of prismatic $F$-crystals by noting that the resulting Galois representations are crystalline.

\begin{proposition}[Prismatic $F$-crystals on $\mathcal O_K$ give crystalline Galois representations]
\label{FCrystoCrysGal}
Let $\mathcal{E} \in \mathrm{Vect}^{\varphi}(X_\Prism,\mathcal{O}_\Prism)$. Then $T(\mathcal{E})[1/p]$ is a crystalline Galois representation.
\end{proposition}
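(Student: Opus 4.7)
The plan is to evaluate $\mathcal{E}$ at the two canonical prisms $(A_{\inf},\tilde{\xi}) = \Prism_{\mathcal{O}_C}$ and $(A_{\crys},(p)) = \Prism_{\mathcal{O}_C/p}$, and to compare the resulting modules using the \'etale and crystalline realizations.

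First I set $M := \mathcal{E}(A_{\inf})$; by functoriality of $\mathcal{E}$ applied to the $G_K$-equivariant prism $A_{\inf}$ over $\mathcal{O}_K$, this is a finite free $A_{\inf}$-module of rank $r$ with Frobenius $\varphi_M$ and a continuous $G_K$-action commuting with $\varphi_M$. The identification of the \'etale realization worked out in the proof of Fargues' theorem (Theorem~\ref{FarguesShtukaClassify}) produces a $G_K$- and $\varphi$-equivariant isomorphism $M[1/\mu] \simeq T \otimes_{\mathbf{Z}_p} A_{\inf}[1/\mu]$ with $T := T(\mathcal{E})$. Base changing along the natural map $A_{\inf}[1/\mu] \to B_{\crys}$ (legitimate since $\mu$, and hence $\tilde{\xi}$, become units in $B_{\crys}$), I obtain
\[ V \otimes_{\mathbf{Q}_p} B_{\crys} \simeq M \otimes_{A_{\inf}} B_{\crys} \]
as $(G_K,\varphi)$-modules over $B_{\crys}$, where $V := T[1/p]$. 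Using the prism map $(A_{\inf},\tilde{\xi}) \to (A_{\crys},(p))$ in $\Spf(\mathcal{O}_K)_\Prism$ (whose Hodge--Tate quotient is the reduction $\mathcal{O}_C \to \mathcal{O}_C/p$) together with the crystal property of $\mathcal{E}$, I further identify $\mathcal{E}(A_{\crys}) = M \otimes_{A_{\inf}} A_{\crys}$, so that $V \otimes B_{\crys} \simeq \mathcal{E}(A_{\crys})[1/\mu]$ compatibly with $G_K$ and $\varphi$.

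To finish, I must exhibit a $K_0$-rational form of $\mathcal{E}(A_{\crys})[1/p]$. By Construction~\ref{CrysdRRealize}, the crystalline realization $D_{\crys}(\mathcal{E})$ is a classical $F$-crystal of rank $r$ on $\mathrm{Spec}(\mathcal{O}_K/p)_{\crys}$, and by Example~\ref{ex:CrystallineFCrys} its value on the PD-thickening $(\mathcal{O}_C/p, A_{\crys})$ agrees with $\mathcal{E}(A_{\crys})$. Pulling $D_{\crys}(\mathcal{E})$ back along the closed immersion $\mathrm{Spec}(k) \hookrightarrow \mathrm{Spec}(\mathcal{O}_K/p)$ and evaluating at the initial PD-thickening $(k, W(k))$ produces a finite free $W(k)$-module $D$ of rank $r$ with Frobenius; set $D_{K_0} := D[1/p]$, a $K_0$-vector space of dimension $r$. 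Combining the $F$-crystal structure with the ring inclusion $W(k) \hookrightarrow A_{\crys}$, my aim is to construct a $G_K$- and $\varphi$-equivariant isomorphism
\[ \mathcal{E}(A_{\crys})[1/p] \simeq D_{K_0} \otimes_{K_0} B_{\crys}^+ \]
with trivial $G_K$-action on $D_{K_0}$. Inverting $\mu$ yields $V \otimes_{\mathbf{Q}_p} B_{\crys} \simeq D_{K_0} \otimes_{K_0} B_{\crys}$; taking $G_K$-invariants and using $B_{\crys}^{G_K} = K_0$, this embeds $D_{K_0}$ into $D_{\crys}(V)$, forcing $\dim_{K_0} D_{\crys}(V) \geq r$. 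Since the reverse inequality $\dim_{K_0} D_{\crys}(V) \leq \dim_{\mathbf{Q}_p} V = r$ holds for any $p$-adic representation, equality follows and $V$ is crystalline.

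The hard part will be the construction of the trivialization $\mathcal{E}(A_{\crys})[1/p] \simeq D_{K_0} \otimes B_{\crys}^+$. The most direct approach would apply the crystal property to a morphism $(k, W(k)) \to (\mathcal{O}_C/p, A_{\crys})$ in $\mathrm{Spec}(\mathcal{O}_K/p)_{\crys}$, but no such morphism exists on the nose: the two induced compositions $\mathcal{O}_K/p \to k \hookrightarrow \mathcal{O}_C/p$ and $\mathcal{O}_K/p \hookrightarrow \mathcal{O}_C/p$ disagree on a uniformizer $\pi \in \mathcal{O}_K$, since $\pi$ is sent to $0$ in $k$ but to a nonzero element in $\mathcal{O}_C/p$. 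This obstruction does however vanish after iterating the absolute Frobenius of $\mathcal{O}_K/p$ sufficiently many times: $\pi^{p^n} = 0$ in $\mathcal{O}_C/p$ as soon as $p^n \geq e$ (the absolute ramification index of $K$), so the two structure maps agree after a suitable Frobenius twist. The $F$-crystal identity $\varphi^* D_{\crys}(\mathcal{E}) \simeq D_{\crys}(\mathcal{E})[1/p]$ should then upgrade this Frobenius-twisted compatibility into the required trivialization after inverting $p$.
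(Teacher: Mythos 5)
Your proof is correct and follows essentially the same route as the paper: identify $T(\mathcal{E}) \otimes B_{\crys}$ with $\mathcal{E}(A_{\crys})$ after inverting $\mu$ using \cite[Lemma 4.26]{BMS1} and the crystal property, then trivialize the resulting $A_{\crys}$-crystal via Dwork's Frobenius trick, exploiting that the $n$-fold Frobenius on $\mathcal{O}_K/p$ factors through $k$ for $p^n \geq e$ so that the $F$-isogeny pulls the descent to $W(k)$ after inverting $p$. The paper's version is more compressed but identical in substance; your final dimension count just spells out what the $G_K$-equivariant isomorphism $V \otimes_{\mathbf{Q}_p} B_{\crys} \simeq D_{K_0} \otimes_{K_0} B_{\crys}$ immediately gives.
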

\begin{proof}
Fix a prismatic $F$-crystal $\mathcal{E}$ over $X$. Pulling back to $Y$ and then further to $Y_{p=0}$, we have a natural identification
\[ \mathcal{E}(\Prism_{\mathcal{O}_C}) \otimes_{\Prism_{\mathcal{O}_C}} \Prism_{\mathcal{O}_C/p} \simeq \mathcal{E}(\Prism_{\mathcal{O}_C/p}).\]
Identifying $\Prism_{\mathcal{O}_C} = A_{\inf}$ and $\Prism_{\mathcal{O}_C/p} = A_{\crys}$, we can write this as
\begin{equation}
\label{eq:lat1}
\mathcal{E}(\Prism_{\mathcal{O}_C}) \otimes_{A_{\inf}} A_{\crys} \simeq \mathcal{E}(\Prism_{\mathcal{O}_C/p}).
 \end{equation}
Inverting $\mu$ in the tensor base $A_{\inf}$ gives
\[ \mathcal{E}(\Prism_{\mathcal{O}_C})[1/\mu]\otimes_{A_{\inf}[1/\mu]} B_{\crys} \simeq \mathcal{E}(\Prism_{\mathcal{O}_C/p})[1/\mu].\]
As explained after Theorem~\ref{FarguesShtukaClassify}, by \cite[Lemma 4.26]{BMS1}, the object $\mathcal{E}(\Prism_{\mathcal{O}_C})[1/\mu]$ identifies with $T(\mathcal{E}) \otimes_{\mathbf{Z}_p} A_{\inf}[1/\mu]$, so we can rewrite the above as a natural isomorphism
\[T(\mathcal{E})\otimes_{\mathbf{Z}_p} B_{\crys} \simeq  \mathcal{E}(\Prism_{\mathcal{O}_C/p})[1/\mu].\]
The rest follows from Dwork's Frobenius trick. More precisely, observe that $\mathcal{O}_K/p \to k$ is an infinitesimal thickening in characteristic $p$, so the $n$-fold Frobenius on $\mathcal{O}_K/p$ factors over $k$. It follows that if $\mathcal{E'}$ is any crystalline crystal on $\mathrm{Spec}(\mathcal{O}_K/p)$, then $(\varphi^n)^* \mathcal{E'}$ identifies with the pulback of $\mathcal{E'}|_{\mathrm{Spec}(k)_\Prism}$ along the map $\varphi_n:k \to \mathcal{O}_K/p$ induced by $\varphi^n$ on $\mathcal{O}_K/p$ for $n \gg 0$. Pulling back further to $\mathcal{O}_C/p$ and using the $F$-crystal structure to drop the Frobenius pullback after inverting $p$, we learn that the right side in the isomorphism above can be rewritten as 
\[ \mathcal{E}(\Prism_{\mathcal{O}_C/p})[1/\mu] \simeq \mathcal{E}(W(k)) \otimes_{W(k)} B_{\crys}.\]
Putting everything together, we get a canonical isomorphism
\[ T(\mathcal{E}) \otimes_{\mathbf{Z}_p} B_{\crys} \simeq \mathcal{E}(W(k)) \otimes_{W(k)} B_{\crys}.\]
As this isomorphism is $G_K$-equivariant, it follows that $T(\mathcal{E})[1/p]$ is a crystalline $G_K$-representation.
\end{proof}

\begin{remark}[A compatibility of lattices]
\label{CompatLattice}
Keep notation as in the proof of Proposition~\ref{FCrystoCrysGal}. Base changing the final isomorphism along $\varphi^\ast B_{\crys} \to B_{\dR}$ gives an identification
\[ T(\mathcal{E}) \otimes_{\mathbf{Z}_p} B_{\dR} \simeq \varphi^\ast \mathcal{E}(W(k)) \otimes_{W(k)} B_{\dR}.\]
Thus, $F' := \varphi^\ast \mathcal{E}(W(k)) \otimes_{W(k)} B_{\dR}^+$ gives a $B_{\dR}^+$-lattice in $T(\mathcal{E}) \otimes_{\mathbf{Z}_p} B_{\dR}$. On the other hand, we also have the lattice $F = \varphi^* \mathcal{E}(\Prism_{\mathcal{O}_C}) \otimes_{\Prism_{\mathcal{O}_C}} B_{\dR}^+$, as explained following Theorem~\ref{FarguesShtukaClassify}. It follows from the constructions that these lattices are identical.
\end{remark}

Using Theorem~\ref{FarguesShtukaClassify}, we can construct $G_K$-equivariant $F$-crystals on $Y$ starting with de Rham Galois representations.

\begin{construction}[From de Rham Galois representations to $F$-crystals over $\mathcal{O}_C$]
\label{dRGaloisShtuka}
Let $T$ be a $\mathbf{Z}_p$-lattice in a de Rham Galois representation $V=T[1/p]$ of $G_K$. Then we have a natural $G_K$-equivariant filtered isomorphism
\[ V \otimes_{\mathbf{Q}_p} B_{\dR} \simeq D_{\dR}(V) \otimes_K B_{\dR}.\]
In particular, $D_{\dR}(V) \otimes_K B_{\dR}^+$ is a $B_{\dR}^+$-lattice in $V \otimes_{\mathbf{Q}_p} B_{\dR}$. By Theorem~\ref{FarguesShtukaClassify} as well as the compatibility in Remark~\ref{CompatLattice}, the assignment
\[ T \mapsto (T, D_{\dR}(V) \otimes_K B_{\dR}^+)\]
gives the middle horizontal arrow in a commutative diagram
\[ \xymatrix{\mathrm{Vect}^{\varphi}(X_\Prism,\mathcal{O}_\Prism) \ar[r] \ar[d] &  \mathrm{Vect}^{\varphi}(Y_\Prism,\mathcal{O}_\Prism) \ar@{=}[d] \\
 \mathrm{Rep}_{\mathbf{Z}_p}^{\dR}(G_K) \ar[r]^-{T \mapsto (T, D_{\dR}(T[1/p]) \otimes_K B_{\dR}^+)} \ar[d] & \{ (T,F) \mid T \in \mathrm{Vect}(\mathbf{Z}_p),\ F \subset T \otimes_{\mathbf{Z}_p} B_{\dR}  \text{ a }B_{\dR}^+{-lattice} \} \ar[d] \\
  \mathrm{Rep}_{\mathbf{Z}_p}(G_K) \simeq  \mathrm{Vect}(X_\Prism,\mathcal{O}_\Prism[1/\mathcal{I}_\Prism]^\wedge_p)^{\varphi=1} \ar[r] & \mathrm{Vect}(Y_\Prism,\mathcal{O}_\Prism[1/\mathcal{I}_\Prism]^\wedge_p)^{\varphi=1} }\]
In particular, as the bottom vertical arrow on the left is fully faithful, it follows that for any pair $\mathcal{M},\mathcal{N} \in \mathrm{Vect}^{\varphi}(X_\Prism,\mathcal{O}_\Prism)$ and any map $g:T(\mathcal{M}) \to T(\mathcal{N})$ in $\mathrm{Vect}(X_\Prism,\mathcal{O}_\Prism[1/\mathcal{I}_\Prism]^\wedge_p)^{\varphi=1}$, there is a unique map $f_Y:\mathcal{M}|_Y \to \mathcal{N}|_Y$ in  $\mathrm{Vect}^{\varphi}(Y_\Prism,\mathcal{O}_\Prism)$ with $T(f_Y) = g|_Y$; note that this assertion only uses the full faithfulness of Theorem~\ref{FarguesShtukaClassify} (i.e., full faithfulness of the equivalence appearing as the top right vertical arrow above).
\end{construction}

We can now formulate our main theorem:

\begin{theorem}[The main theorem]
\label{MainThm}
The \'etale realization functor
\[ T:\mathrm{Vect}^{\varphi}(X_\Prism,\mathcal{O}_\Prism) \to \mathrm{Rep}^{\crys}_{\mathbf{Z}_p}(G_K)\]
coming from Proposition~\ref{FCrystoCrysGal} is an equivalence of categories.
\end{theorem}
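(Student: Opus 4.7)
Following the strategy sketched in the introduction, I would prove the theorem by descent along $Y = \mathrm{Spf}(\mathcal{O}_C) \to X = \mathrm{Spf}(\mathcal{O}_K)$, which gives a cover of the final object of $X_\Prism$ by Example~\ref{BKAinfPrismOK}(2). By Proposition~\ref{FCrystoCrysGal} the functor $T$ already lands in crystalline representations, so what remains is full faithfulness and essential surjectivity.

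\textbf{Full faithfulness.} Given prismatic $F$-crystals $\mathcal{M}, \mathcal{N}$ on $X$ and a $G_K$-equivariant morphism $g\colon T(\mathcal{M}) \to T(\mathcal{N})$, Construction~\ref{dRGaloisShtuka} already produces a unique morphism $f_Y\colon \mathcal{M}|_Y \to \mathcal{N}|_Y$ of prismatic $F$-crystals lifting $g|_Y$; this uses only the easy (full faithfulness) direction of Fargues' theorem together with the canonical de Rham lattice attached to a crystalline (hence de Rham) representation. To descend $f_Y$ to $X$, I would verify $p_1^\ast f_Y = p_2^\ast f_Y$ over the prism $\Prism_{\mathcal{O}_C \widehat{\otimes}_{\mathcal{O}_K} \mathcal{O}_C}$ attached to $Y \times_X Y$. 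Both pullbacks have \'etale realization $g|_{Y \times_X Y}$; since this self-tensor product is a $p$-torsionfree qrsp ring, Example~\ref{FCrysLaurentFCrysPerfectoid} (via Lemma~\ref{PrismQRSPTrans}) gives faithfulness of $T$ at that level and forces agreement. Faithfulness of $T$ on $X$ itself is then automatic: a morphism $f$ with $T(f)=0$ restricts to a morphism over $Y$ whose \'etale realization vanishes, so $f|_Y = 0$ by the uniqueness in Construction~\ref{dRGaloisShtuka}, and hence $f=0$ by descent.

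\textbf{Essential surjectivity.} Given a crystalline $\mathbf{Z}_p$-lattice $L$ with $V = L[1/p]$, I would build $\mathfrak{M}(L)$ in three stages. \emph{Stage A (construction over $Y$)}: apply the full equivalence of Fargues' theorem (Theorem~\ref{FarguesShtukaClassify}) to the pair $(L,\, D_{\dR}(V) \otimes_K B_{\dR}^+)$ to produce a prismatic $F$-crystal $\mathfrak{N}$ over $Y$. This step mimics Kisin's \cite{Kisin} argument, using Kedlaya's slope filtration theorem via Berger's \cite{BergerModuleWach} reformulation of weak admissibility as a boundedness condition, applied here directly over $A_{\inf}$. \emph{Stage B (descent isomorphism)}: construct a Frobenius-equivariant isomorphism $\alpha\colon p_1^\ast \mathfrak{N} \simeq p_2^\ast \mathfrak{N}$ over $\Prism_{\mathcal{O}_C \widehat{\otimes}_{\mathcal{O}_K} \mathcal{O}_C}$. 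Since both sides have \'etale realization $L|_{Y \times_X Y}$, a candidate isomorphism exists after inverting $\mathcal{I}_\Prism$ and $p$; the real question is its boundedness along the Hodge--Tate divisor. I would use the Beilinson fibre sequence of \cite{AMMNBFS} to translate this boundedness question into a statement about the filtered $\varphi$-module $D_{\crys}(V)$, where it holds because $L$ was assumed crystalline. \emph{Stage C (cocycle condition)}: on the triple self-product of $Y$ both sides of the cocycle identity have the same \'etale realization, so faithfulness of $T$ on that qrsp level (as in the full faithfulness argument) forces the equation. Descent via Proposition~\ref{VBDescentPrism}, applied to the prismatic Cech nerve of $Y \to X$, then assembles $(\mathfrak{N}, \alpha)$ into the desired prismatic $F$-crystal $\mathfrak{M}(L)$ on $X$ with $T(\mathfrak{M}(L)) \simeq L$.

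\textbf{Main obstacle.} Stage B is by far the hardest step: the ring $\Prism_{\mathcal{O}_C \widehat{\otimes}_{\mathcal{O}_K} \mathcal{O}_C}$ is extremely inexplicit (as the acknowledgements emphasize), and the Beilinson fibre sequence of \cite{AMMNBFS} is the crucial new input that bypasses any need for explicit computation in it. Stage A is essentially Kisin's theorem transported to $A_{\inf}$, while the full faithfulness argument and Stage C are formal once faithfulness of $T$ on $p$-torsionfree qrsp levels is available through Example~\ref{FCrysLaurentFCrysPerfectoid}.
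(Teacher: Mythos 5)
Your plan for full faithfulness matches the paper's proof essentially verbatim: construct the lift $f_Y$ over $Y$ from the fully faithful direction of Fargues' classification together with the de~Rham lattice, then check $p_1^\ast f_Y = p_2^\ast f_Y$ using faithfulness of $T$ over the $p$-torsionfree qrsp ring $\mathcal{O}_C \widehat{\otimes}_{\mathcal{O}_K} \mathcal{O}_C$ (Lemma~\ref{PrismQRSPTrans}). That part is fine. Your Stage~A is slightly confused, though: the paper deliberately avoids the essential-surjectivity direction of Theorem~\ref{FarguesShtukaClassify} (it only cites the ``easy'' full-faithfulness half) and instead produces $\mathcal{M}'$ over $A_{\inf}$ by Berger's reformulation of weak admissibility plus Kedlaya's slope filtration applied to a crystal already defined on the open disc. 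Your proposal to feed the pair $(L, D_{\dR}(V)\otimes_K B_{\dR}^+)$ directly into the full equivalence of Fargues' theorem is a legitimate alternative route for Stage~A, but then the Berger/Kedlaya machinery you cite in the same breath is not the mechanism doing the work --- you should pick one of the two and drop the other.

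The genuine gap is in Stage~B. You say the candidate descent isomorphism ``exists after inverting $\mathcal{I}_\Prism$ and $p$'' (i.e., from the $G_K$-action on $L$ on the \'etale locus) and that ``the real question is its boundedness along the Hodge--Tate divisor,'' which you propose to settle via the Beilinson fibre sequence. But the Beilinson fibre sequence argument in the paper (Proposition~\ref{ACDescent}, built on Proposition~\ref{TateTwistRational} and Lemma~\ref{FixedRational}) does not take the \'etale-locus isomorphism as input; it takes as input a descent isomorphism \emph{already defined over $\Prism_\bullet\langle I/p\rangle[1/p]$} (the ``closed unit disc'' ring) and extends it to $\Prism_\bullet[1/p]$. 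Producing that disc-level descent datum is a separate and essential step, supplied by Construction~\ref{FiltPhiCrys} and Remark~\ref{ACFCrys}: the filtered $\varphi$-module $D_{\crys}(V)$ over $K$ yields a crystal $\mathcal{M}(D)_{\langle I/p\rangle}$ of $\Prism_\bullet\langle I/p\rangle[1/p]$-modules on $X_{qrsp}$, whose descent data over $\Prism_R\langle I/p\rangle[1/p]$ is automatic precisely because the construction lives over $X$, and whose pole structure at $I=0$ is controlled by the Hodge filtration via Beauville--Laszlo glueing. Passing directly from the \'etale-locus isomorphism to boundedness does not work, because nothing in the \'etale data controls the order of poles at the Hodge--Tate divisor; that control is exactly what the filtration on $D_{\crys}(V)$ provides. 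Your phrasing ``translate the boundedness question into a statement about the filtered $\varphi$-module'' inverts the actual logical order: the filtered $\varphi$-module comes first and supplies the input to Proposition~\ref{ACDescent}.

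Finally, you omit the integrality step entirely. Proposition~\ref{ACDescent} only produces $\alpha$ over $\Prism_R[1/p]$, not over $\Prism_R$. The paper then has to show that $\alpha$ carries $p_1^\ast \mathcal{M}'$ into $p_2^\ast \mathcal{M}'$ integrally, which it does by removing the codimension-$2$ locus $V(p,I)$, gluing via Beauville--Laszlo along $p=0$, and reducing to a statement over $\Prism_{R,\perf}$ that is identified (almost) with $\mathrm{Cont}(G_K,\Prism_{\mathcal{O}_C})$, where the claim becomes the $G_K$-stability of the lattice. This is not a formality and needs to be supplied. (Starting from the integral $L$ in Stage~A via Fargues makes the \emph{final} lattice-adjustment step of the paper's proof unnecessary, but it does not make this intermediate integrality check any easier.)
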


We prove the full faithfulness here using the preceding discussion on Galois representations attached to prismatic $F$-crystals; the essential surjectivity is proven in \S \ref{sec:EssSurj}.

\begin{proof}[Proof of full faithfulness in Theorem~\ref{MainThm}]
Faithfulness of $T$ over $X$ reduces to the analogous assertion over $Y$, which in turn follows by observing that the canonical map $M \to M \otimes_{A_{\inf}} A_{\inf}[1/\ker(\theta)]^\wedge_p = M\otimes_{A_{\inf}} W(C^\flat)$ is injective for any finite projective $A_{\inf}$-module $M$ by Lemma~\ref{PrismQRSPTrans}. Note that this argument also proves that the \'etale realization $T$ is faithful over any term of $Y^{\bullet/X}$.

For fullness, fix prismatic $F$-crystals $\mathcal{M}$ and $\mathcal{N}$ together with a map $g:T(\mathcal{M}) \to T(\mathcal{N})$ of the corresponding  objects in $\mathrm{Vect}(X_\Prism,\mathcal{O}_\Prism[1/\mathcal{I}_\Prism]^\wedge_p)^{\varphi=1}$ (or equivalenty the corresponding Galois representations). We must find a map $f:\mathcal{M} \to \mathcal{N}$ such that $T(f) = g$. First, by the last sentence of Construction~\ref{dRGaloisShtuka}, the map $g|_Y$ extends (necessarily uniquely) to a map $f_Y:\mathcal{M}|_Y \to \mathcal{N}|_Y$. The map $f_Y$ induces two a priori distinct maps $a,b:\mathcal{M}|_{Y \times_X Y} \to \mathcal{N}|_{Y \times_X Y}$ via pulback along either projection. We must show that $a=b$. But the induced maps $T(a),T(b):T(\mathcal{M})|_{Y \times_X Y} \to T(\mathcal{N})|_{Y \times_X Y}$ are the same as $T(f_Y)$ is induced from $g$ via pullback; the desired equality now follows from faithfulness of $T$ over $Y \times_X Y$.
\end{proof}

\begin{remark}[Full faithfulness via Breuil-Kisin prisms]
\label{FFviaBK}
The proof of full faithfulness in Theorem~\ref{MainThm} explained above relied on certain properties of Galois representations. An alternative purely prismatic argument can be given using Theorem~\ref{EtaleRealizeBK}, as we now briefly sketch. Let $\mathfrak{S}^{(\bullet)}$ denote the cosimplicial $\delta$-ring obtained by taking the Cech nerve of a Breuil-Kisin prism $(\mathfrak{S},E(u)) \in X_{\Prism}$; see Notation~\ref{NotBK} and Construction~\ref{PrismaticCechNerve}. Then we have compatible descent equivalences
\[ \mathrm{Vect}^{\varphi}(X_\Prism,\mathcal{O}_\Prism) \simeq \lim \mathrm{Vect}^{\varphi}(\mathfrak{S}^{(\bullet)}) \quad \text{and} \quad \mathrm{Vect}^{\varphi}(X_\Prism,\mathcal{O}_\Prism[1/\mathcal{I}_\Prism]^{\wedge}_p) \simeq \lim \mathrm{Vect}^{\varphi}(\mathfrak{S}^{(\bullet)}[1/I]^{\wedge}_p), \]
so it suffices to show that the base change functor
\[  \mathrm{Vect}^{\varphi}(\mathfrak{S}^{(\bullet)})  \to \mathrm{Vect}^{\varphi}(\mathfrak{S}^{(\bullet)}[1/I]^{\wedge}_p)\]
of cosimplicial categories induces a fully faithful in the inverse limit. Theorem~\ref{EtaleRealizeBK} implies that the above functor is fully faithful in cosimplicial degree $0$. Also, the induced functor in each cosimplicial degree is faithful as the map $\mathfrak{S}^{(i)} \to \mathfrak{S}^{(i)}[1/I]^{\wedge}_p$ is injective. It then follows formally that the limiting functor is indeed fully faithful.
\end{remark}

\begin{remark}[Full faithfulness fails with mod $p$ coefficients]
The analog of Theorem~\ref{MainThm} with mod $p$ coefficients is false. In fact,  the full faithfulness fails. The proof given above used crucially (via the full faithfulness in Theorem~\ref{FarguesShtukaClassify}) Kedlaya's theorem \cite{KedlayaAinf} that sections of vector bundles on $\mathrm{Spec}(A_{\inf})$ do not change if we remove the closed point; this assertion fails for vector bundles on $\mathrm{Spec}(A_{\inf}/p)$, which causes the problem.  In the next paragraph, we shall give an explicit example of an invertible object $(E,\varphi) \in \mathrm{Pic}^{\varphi}(X_\Prism,\mathcal{O}_\Prism/p)$ such that the \'etale realization of $E$ is the trivial local system while the underlying invertible $\mathcal{O}_\Prism/p$-module $E$ is non-trivial. In particular, full faithfulness of the \'etale realization fails. 

Take $K=\mathbf{Q}_p$. Consider the $F$-crystal $E := \mathcal{O}_\Prism\{p-1\}/p \in \mathrm{Vect}^{\varphi}(X_\Prism, \mathcal{O}_\Prism/p)$ coming from Example~\ref{ex:BKT}. The \'etale realization $T$ is symmetric monoidal, so $T(E) = \mathbf{Z}/p(p-1) \simeq \mathbf{Z}/p$ as $G_K$-representations. However,  $E$ is not isomorphic to the trivial $F$-crystal $\mathcal{O}_\Prism/p$. Indeed, we have
\[ \mathcal{O}_\Prism\{p\}/p \simeq \varphi^* \mathcal{O}_\Prism\{1\}/p \simeq \mathcal{I}^{-1}_\Prism/p \otimes \mathcal{O}_\Prism\{1\}/p,\]
whence $E \simeq \mathcal{I}_\Prism^{-1}/p$, so it suffices to show that $\mathcal{I}_\Prism/p$ is a non-trivial invertible $\mathcal{O}_\Prism/p$-sheaf. But this is already true after restriction to the Hodge-Tate locus, i.e., $\mathcal{I}_\Prism/p \otimes_{\mathcal{O}_\Prism/p} \overline{\mathcal{O}_\Prism}/p$ and $\overline{\mathcal{O}_\Prism}/p$ are not isomorphic as sheaves of $\mathcal{O}_\Prism$-modules; indeed, one can show that the Sen operator acts with weight $1$ on $\mathcal{I}_\Prism/p \otimes_{\mathcal{O}_\Prism/p} \overline{\mathcal{O}_\Prism}/p$ and weight $0$ on $\overline{\mathcal{O}_\Prism}/p$ (see \cite[\S 3.5]{BhattLurieAPC}).
\end{remark}

\newpage
\section{From crystalline Galois representations to prismatic $F$-crystals}
\label{sec:EssSurj}

The goal of this section is to prove the essential surjectivity assertion in Theorem~\ref{MainThm}, so we continue with the notation from \S \ref{sec:FCrysOK}. However, instead of working with the prismatic site $(X_\Prism,\mathcal{O}_\Prism)$, we shall switch to the quasi-syntomic site  $(X_{qsyn},\Prism_\bullet)$ (or equivalently the qrsp site  $(X_{qrsp},\Prism_\bullet)$)  from \S \ref{sec:PrismQsyn}; accordingly, appropriate period sheaves on the quasi-syntomic site are introduced in \S \ref{ss:periodqsyn}. Next, in \S \ref{ss:filtcrys}, we explain how a filtered $\varphi$-module gives rise to an $F$-crystal over the ``open unit disc'' version of $\Prism_\bullet$. In the critical \S \ref{ss:BoundDescent}, using the Beilinson fibre sequence from \cite{AMMNBFS}, we explain why this $F$-crystal over the ``open unit disc'' version of $\Prism_\bullet$ satisfies a boundedness condition at the boundary in the presence of some auxiliary lifting data that will ultimately be provided by the weak admissibility of the filtered $\varphi$-module. Armed with this boundedness, we can prove the promised theorem in \S \ref{ss:PfEssSur}.

\begin{example}
The $\mathcal{O}_K$-algebra $\mathcal{O}_C$ gives an object of $X_{qrsp}$. More generally, as $X_{qrsp}$ admits finite non-empty products (computed by $p$-completed tensor products of the underlying $\mathcal{O}_K$-algebras), the terms of the $p$-completed Cech nerve of $\mathcal{O}_K \to \mathcal{O}_C$ lie in $X_{qrsp}$; the rest  of this section only uses the first three terms of this Cech nerve.
\end{example}

\subsection{Some period sheaves on the quasi-syntomic site}
\label{ss:periodqsyn}

In this subsection, we introduce certain period sheaves on $X_{qrsp}$ that shall be important to our arguments. Roughly, the idea is to view $\Prism_\bullet$ as a structured collection of prisms analogous to $A_{\inf}$.

\begin{construction}[Period sheaves on the quasi-syntomic site that carry a Frobenius]
On $X_{qrsp}$, we shall use the following sheaves:
\begin{itemize}
\item The prismatic structure sheaf $\Prism_\bullet$: this is given by
\[ R \mapsto \Prism_R.\]
This sheaf carries an ideal sheaf $I \subset \Prism_\bullet$ given by passing to the ideal of the prism. Vector bundles over $\Prism_\bullet$ identify with prismatic crystals of vector bundles on $\mathrm{Spf}(\mathcal{O}_K)$.

\item The crystalline structure sheaf $A_{\crys}$, corresponding to the special fibre: this is given by
\[R \mapsto A_{\crys}(R) = \Prism_R \{I/p\}.\] 
Note that we have $\Prism_R\{I/p\} = \Prism_{R/p}$, explaining the name. Vector bundles over $A_{\crys}(\bullet)=\Prism_\bullet\{I/p\}$ identify with prismatic crystals of vector bundles on $\mathrm{Spec}(\mathcal{O}_K/p)$. 

\item The rational localization $\Prism_\bullet \langle I/p \rangle$: this is given by
\[ R \mapsto \Prism_R[I/p]^{\wedge}_p.\]
Modules over this will be closely related to filtered $F$-crystals on $X$ (Construction~\ref{FiltPhiCrys}).

\item The sheaf $\Prism_\bullet \{\varphi(I)/p\}$: this is given by
\[R \mapsto \Prism_R\{\varphi(I)/p\},\]
and can thus be regarded as the $\varphi$-pullback of the crystalline structure sheaf over $\Prism_\bullet$.

\item The \'etale structure sheaf $\Prism_\bullet[1/I]^{\wedge}_p$: this is given by
\[ R \mapsto \Prism_R[1/I]^{\wedge}_p.\]
Modules over this sheaf will be closely related to $\mathbf{Z}_p$-local systems on $R[1/p]$ (Remark~\ref{QSynSheavesModules}).

\end{itemize}
We have natural maps
\[ \Prism_\bullet[1/I]^{\wedge}_p \gets \Prism_\bullet \to \Prism_\bullet\{\varphi(I)/p\} \to \Prism_\bullet \langle I/p \rangle \to A_{\crys} = \Prism_\bullet \{I/p\}.\]
Moreover, the Frobenius $\varphi:\Prism_\bullet \to \Prism_\bullet$ carries $I$ to $\varphi(I)$, and hence induces a map 
\[ \tilde{\varphi}:A_{\crys} = \Prism_\bullet\{I/p\} \to \Prism_\bullet\{\varphi(I)/p\} \]
linear over $\varphi:\Prism_\bullet \to \Prism_\bullet$. In particular, one obtains an induced Frobenius endomorphism\footnote{Despite the notation, the endomorphism $\varphi$ of $\Prism \langle I/p \rangle$ defined as above does not underlie a $\delta$-structure, i.e., we need not have $\varphi = \mathrm{Frob} \mod p \Prism \langle I/p \rangle$; in fact, formally enlarging $\Prism_\bullet \langle I/p \rangle$ by forcing this congruence leads to $\Prism_\bullet \{I/p\}$. This issue does not arise for the other sheaves.} of all presheaves introduced above, also denoted  by $\varphi$. Mimicking Definition~\ref{PrismaticFCrysDef}, one then obtains a notion of an $F$-crystal over any of these rings. For instance, an $F$-crystal over $\Prism_\bullet$ is a vector bundle $\mathcal{M}$ over $\Prism_\bullet$ equipped with an isomorphism $\varphi_{\mathcal{M}}:(\varphi^* M) [1/I] \simeq M[1/I]$; we write $\mathrm{Vect}^{\varphi}(\Prism_\bullet)$ for the category of such $F$-crystals, and similarly over the other rings as well.
\end{construction}

\begin{remark}
\label{QSynSheavesModules}
The comparison result in Proposition~\ref{ComparePrismQSyn} gives equivalences
\[ \mathrm{Vect}^{\varphi}(X_\Prism, \mathcal{O}_\Prism) \simeq \mathrm{Vect}^{\varphi}(X_{qrsp},\Prism_\bullet) \quad \text{and} \quad \mathrm{Vect}^{\varphi}(X_\Prism, \mathcal{O}_\Prism[1/\mathcal{I}_\Prism]^{\wedge}_p) \simeq \mathrm{Vect}^{\varphi}(X_{qrsp},\Prism_\bullet[1/I]^{\wedge}_p).\]
In particular, we have
\[ \mathrm{Vect}^{\varphi}(X_{qrsp},\Prism_\bullet[1/I]^{\wedge}_p) \simeq \mathrm{Rep}_{\mathbf{Z}_p}(G_K)\]
by Corollary~\ref{LocSysLaurentFCrysIsog}. Theorem~\ref{MainThm} can then be formulated as the assertion that the base change functor
\[ \mathrm{Vect}^{\varphi}(X_{qrsp},\Prism_\bullet) \to \mathrm{Vect}^{\varphi}(X_{qrsp},\Prism_\bullet[1/I]^{\wedge}_p) \simeq \mathrm{Rep}_{\mathbf{Z}_p}(G_K)\]
is fully faithful with image given by $\mathbf{Z}_p$-lattices in crystalline $\mathbf{Q}_p$-representations of $G_K$. In fact, we have already proven full faithfulness in \S \ref{sec:FCrysOK}; in the rest of the section, we shall check the assertion regarding the essential image.
\end{remark}

\begin{construction}[de Rham period sheaves on the prismatic site]
We define de Rham period sheaves on $X_{qrsp}$ as follows:
\[ \mathbb{B}_{\dR}^+ = \left(\Prism_\bullet[1/p]\right)^\wedge_I \quad \text{and}  \quad \mathbb{B}_{\dR} = \mathbb{B}_{\dR}^+[1/I].\]
Note that $\mathbb{B}_{\dR}^+({\mathcal{O}_C}) = \mathbb{B}_{\dR}^+(C)$ is the usual de Rham period ring attached to $C$ (and thus a discrete valuation ring). More generally,  a similar assertion holds true for any $R \in X_{qrsp}$ which is perfectoid.
\end{construction}

\subsection{From filtered $\varphi$-modules to crystals}
\label{ss:filtcrys}

In this subsection, we explain how to convert a filtered $\varphi$-module over $K$ into a crystal over $\Prism_\bullet \langle I/p \rangle[1/p]$. The basic idea is to take the constant crystal over $\Prism\{I/p\}[1/p]$, and then modify its $\varphi$-pullback to $\Prism\langle I/p \rangle[1/p]$ at the Hodge-Tate divisor using the filtration. 

\begin{construction}[From filtered $\varphi$-modules over $K$ to crystals over $\Prism_\bullet \langle I/p \rangle{[1/p]}$]
\label{FiltPhiCrys}
Let $(D,\varphi_D,\mathrm{Fil}^*)$ be a filtered $\varphi$-module over $K$, i.e., $D$ is a finite dimensional $K_0$-vector space equipped with an isomorphism $\varphi_D:\varphi^* D \simeq D$, and $\mathrm{Fil}^*$ is a (finite, exhaustive, separated) filtration on the $K$-vector space $D_K = D \otimes_{W(k),\varphi} K$; here the extra Frobenius twist is present for compatibility with the prismatic (as opposed to crystalline) picture. We shall attach an $F$-crystal $\mathcal{M}(D)_{\langle I/p \rangle} \in \mathrm{Vect}^{\varphi}(X_{qrsp},\Prism_\bullet \langle I/p \rangle[1/p])$ to the filtered $\varphi$-module  $(D,\varphi_D,\mathrm{Fil}^*)$.

First, we attach an $F$-crystal $\mathcal{M}_1 := \mathcal{M}(D,\varphi)_{\{I/p\}}$ over $\Prism_\bullet\{I/p\}[1/p]$ to the $\varphi$-module $(D,\varphi_D)$. Regard $(D,\varphi_D)$ as an $F$-crystal of $\mathcal{O}_\Prism[1/p]$-modules on $\mathrm{Spec}(k)$.  Using the structure map $W(k) \to \Prism_\bullet\{I/p\}$, we can regard $\Prism_\bullet\{I/p\}$ as a diagram of prisms over $k$. Consequently, we obtain an $F$-crystal $\mathcal{M}_1 := \mathcal{M}(D,\varphi)_{\{I/p\}}$ over $\Prism_\bullet\{I/p\}[1/p]$ from $(D,\varphi_D)$ via base change, i.e.,
\[ \mathcal{M}_1 = D \otimes_{W(k)} \Prism_\bullet\{I/p\},\] 
with the (unit) $F$-crystal structure $\varphi_{\mathcal{M}_1}: \varphi^* \mathcal{M}_1 \simeq \mathcal{M}_1$ induced from $\varphi_D$.

Next, consider the finite projective $\Prism_\bullet \langle I/p \rangle[1/p]$-module $\mathcal{M}_2 := \mathcal{M}_1 \otimes_{\Prism_\bullet \{I/p\}, \tilde{\varphi}} \Prism_\bullet \langle I/p \rangle$. The map $\varphi_{\mathcal{M}_1}$ induces an isomorphism  $\varphi_{\mathcal{M}_2}: \varphi^* \mathcal{M}_2 \simeq \mathcal{M}_2$, so the pair $(\mathcal{M}_2, \varphi_{\mathcal{M}_2})$ is a unit $F$-crystal over $\Prism_\bullet \langle I/p \rangle[1/p]$. To obtain the desired $F$-crystal, we modify $\mathcal{M}_2$ along the locus $I=0$ using the filtration on $D_K$ to obtain the desired $F$-crystal $\mathcal{M}_3 = \mathcal{M}(D)_{\langle I/p \rangle}$ over $\Prism_\bullet \langle I/p \rangle[1/p]$. More precisely, we apply Beauville-Laszlo glueing along the Cartier divisor defined by $I$ to the vector bundles
\begin{itemize}
\item $\mathcal{M}_2[1/I] \in \mathrm{Vect}(X_{qrsp}, \Prism_\bullet \langle I/p \rangle[1/p,1/I])$.
\item $\mathrm{Fil}^0(D_K \otimes_K \mathbb{B}_{\dR}) \in \mathrm{Vect}(X_{qrsp},\mathbb{B}_{\dR}^+)$. 
\end{itemize}
along the isomorphism
\begin{equation}
\label{BLBdR}
 \mathcal{M}_2[1/I] \otimes_{\Prism_\bullet \langle I/p \rangle} \mathbb{B}_{\dR} \simeq D \otimes_{W(k),\varphi} \mathbb{B}_{\dR}   \simeq \mathrm{Fil}^0(D_K \otimes_K \mathbb{B}_{\dR})[1/I]
 \end{equation}
in $\mathrm{Vect}(X_{qrsp},\mathbb{B}_{\dR})$; here we implicitly use Lemma~\ref{RationalPresent} to identify $\mathbb{B}_{\dR}^+$ with $\Prism_\bullet \langle I/p \rangle[1/p]^{\wedge}_I$. Thus, we obtain a vector bundle $\mathcal{M}_3 \in \mathrm{Vect}(X_{qrsp},\Prism_\bullet \langle I/p \rangle[1/p])$ equipped with isomorphisms
\[ \mathcal{M}_3[1/I] \simeq \mathcal{M}_2[1/I] \in \mathrm{Vect}(X_{qrsp},\Prism_\bullet \langle I/p \rangle[1/p,1/I]) \quad \text{and} \quad (\mathcal{M}_3)^\wedge_I \simeq \mathrm{Fil}^0(D_K \otimes_K \mathbb{B}_{\dR}) \in \mathrm{Vect}(X_{qrsp},\mathbb{B}_{\dR}^+) \]
compatible with \eqref{BLBdR} over $\mathbb{B}_{\dR}$. The map $\varphi_{\mathcal{M}_2}[1/I]$ then yields an isomorhism $\varphi_{\mathcal{M}_3}:\varphi^* \mathcal{M}_3 [1/I] \simeq \mathcal{M}_3[1/I]$, yielding the desired $F$-crystal.
\end{construction}

For completeness, we also remark that a partial inverse to the previous construction is described in the proof of Theorem~\ref{KisinFullyFaithfulBK}.  Next, we observe that the Frobenius pullback trick allows us to extend the preceding construction to larger loci. 

\begin{remark}[Analytic continuation of $F$-crystals to the open unit disc]
\label{ACFCrys}
Fix an $F$-crystal $(\mathcal{M},\varphi_{\mathcal{M}})$ over $\Prism_\bullet \langle I/p \rangle[1/p]$ on $X_{qrsp}$; the main example for us is the output of Construction~\ref{FiltPhiCrys}. One can use the contracting property of $\varphi$ to extend $(\mathcal{M},\varphi_{\mathcal{M}})$ uniquely to an $F$-crystal $(\mathcal{N},\varphi_{\mathcal{N}})$ over $\Prism_\bullet \langle \varphi^n(I)/p \rangle[1/p]$ on $X_{qrsp}$ for any $n \geq 1$. Let us explain how to construct this extension for $n=1$; the extension for larger $n$ is constructed entirely analogously. The underlying vector bundle $\mathcal{N} \in \mathrm{Vect}(X_{qrsp},\Prism_\bullet \langle \varphi(I)/p \rangle[1/p])$ is defined glueing the vector bundles
\begin{enumerate}
\item $\tilde{\varphi}^* \mathcal{M}|_{\Prism_\bullet \langle \varphi(I)/p \rangle[1/p, 1/I]} \in \mathrm{Vect}(X_{qrsp}, {\Prism_\bullet \langle \varphi(I)/p \rangle[1/p, 1/I]})$; note that this bundle extends $\varphi^* \mathcal{M}$ (and thus also $\mathcal{M}$, via $\varphi_{\mathcal{M}}$) over $\Prism_\bullet \langle I/p \rangle[1/p,1/I]$.
\item $\mathcal{M}^{\wedge}_I \in \mathrm{Vect}(X_{qrsp},\Prism_\bullet \langle \varphi(I)/p \rangle[1/p]^{\wedge}_I)$; here we implicitly use that the $I$-adic completion of $\Prism_\bullet[1/p]$ coincides with that of $\Prism_\bullet \langle \varphi^n(I)/p \rangle[1/p]$ for all $n \geq 0$ by Lemma~\ref{RationalPresent} below.
\end{enumerate}
along the evident natural isomorphism of their base changes to $\Prism_\bullet \langle \varphi(I)/p \rangle[1/p]^{\wedge}_I[1/I]$. It is clear from the above description that $\mathcal{N}$ extends $\mathcal{M}$ over $\Prism_R \langle I/p \rangle[1/p]$, whence $\varphi^* \mathcal{N} = \tilde{\varphi}^* \mathcal{M}$. The desired $F$-crystal structure $\varphi_{\mathcal{N}}:(\varphi^* \mathcal{N})[1/I] \simeq \mathcal{N}[1/I]$ arises by observing that both sides identify with $(\tilde{\varphi}^* \mathcal{M})[1/I]$: this follows for the target by the definition in (1) and for the source by the preceding sentence.
\end{remark}

The following lemma was used above. 

\begin{lemma}
\label{RationalPresent}
The natural map gives an isomorphism
\[ \mathbb{B}_{\dR}^+ := \Prism_\bullet[1/p]^{\wedge}_I \simeq \Prism_\bullet \langle \varphi^n(I)/p \rangle[1/p]^{\wedge}_I\]
for all $n \geq 0$.
\end{lemma}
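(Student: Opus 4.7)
The plan is to establish the isomorphism section-wise on $X_{qrsp}$. Fix $R \in X_{qrsp}$ and set $(A,I) = (\Prism_R, I_R)$, with $d$ a generator of $I$; it then suffices to produce a natural isomorphism
\[ A[1/p]^{\wedge}_d \;\simeq\; A\langle \varphi^n(d)/p \rangle[1/p]^{\wedge}_d. \]
The forward map is the one induced by the localization $A \to A\langle \varphi^n(d)/p\rangle$. The conceptual point behind the inverse is that on a $\mathbf{Q}_p$-algebra the condition appearing in the rational localization (namely, $\varphi^n(d)$ divisible by $p$) is vacuous, so $I$-adic completion should absorb the localization.

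To make this precise, I would set $B := A[1/p]^{\wedge}_d$ and observe that $B$ is $d$-adically complete by construction and \emph{tautologically} $p$-adically complete, since $p$ is a unit in $B$ and so the $p$-adic filtration on $B$ is stationary. In $B$ the element $\varphi^n(d)$ admits a unique quotient by $p$, namely $\varphi^n(d)/p$, so the structure map $A \to B$ extends uniquely to an $A$-algebra map out of $A[\varphi^n(d)/p] = A[x]/(px - \varphi^n(d))$ by sending $x \mapsto \varphi^n(d)/p$. Because $B$ is $p$-adically complete, this further extends along $p$-adic completion to an $A$-algebra map $A\langle \varphi^n(d)/p\rangle \to B$. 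Inverting $p$ and then $d$-adically completing (both operations are legal since $B$ is already a $d$-complete $\mathbf{Q}_p$-algebra) then yields the desired inverse map $A\langle \varphi^n(d)/p\rangle[1/p]^{\wedge}_d \to A[1/p]^{\wedge}_d$.

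To close the argument I would check that the two constructed maps are mutually inverse. Both composites are continuous $A$-algebra endomorphisms of a $d$-adically complete $\mathbf{Q}_p$-algebra whose restrictions to the image of $A[1/p]$ equal the identity; since $A[1/p]$ has dense image in both $A[1/p]^{\wedge}_d$ and $A\langle \varphi^n(d)/p\rangle[1/p]^{\wedge}_d$, each composite must itself be the identity.

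I do not anticipate any genuine obstacle, as the argument is entirely formal. It rests on two simple facts: the $p$-adic topology is trivial on a $\mathbf{Q}_p$-algebra, and the rational localization $A\langle \varphi^n(d)/p\rangle$ enjoys a straightforward universal property among $p$-adically complete $A$-algebras in which $\varphi^n(d)$ becomes divisible by $p$.
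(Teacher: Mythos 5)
The argument fails at the assertion that $B := A[1/p]^{\wedge}_d$ is $p$-adically complete. The reasoning you give --- that $p$ is a unit in $B$, so the $p$-adic filtration is stationary --- in fact shows the opposite: $B/p^nB = 0$ for all $n$, hence $\lim_n B/p^nB = 0$, so $B$ is ($p$-adically or derived $p$-) complete only if $B = 0$. A nonzero $\mathbf{Q}_p$-algebra is never $p$-adically complete.

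This is not cosmetic, because the step it underwrites genuinely fails for $n \geq 1$. You want to extend the map $A[x]/(px-\varphi^n(d)) \to B$, $x \mapsto \varphi^n(d)/p$, across the $p$-adic completion to $A\langle \varphi^n(d)/p\rangle \to B$. Consider the element $\gamma := \sum_{m\geq 0} p^m x^m = (1-px)^{-1} \in A\langle \varphi^n(d)/p\rangle$: its partial sums map to $\sum_{m\leq N}\varphi^n(d)^m$, and these do \emph{not} converge $d$-adically in $B$, because $\varphi^n(d) = d^{p^n} + ph$ with $h$ a unit (it is $\delta(d)^{p^{n-1}}$ mod $(p,d)$), so $\varphi^n(d)$ is a unit in $B$. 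The map $A\langle\varphi^n(d)/p\rangle \to B$ does exist once the lemma is known, but it sends $\gamma$ to $(1-\varphi^n(d))^{-1}$, an element produced only after a regrouping of the series that uses the congruence $\varphi^n(d) \equiv d^{p^n} \pmod p$ and the fact that $d$ is a nonzerodivisor mod $p$; it is not the $p$-adic termwise extension. So the inverse map cannot be built by the universal property of $p$-adic completion, and the lemma is not formal in the way you suggest.

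The congruence $\varphi^n(d)\equiv d^{p^n}\pmod p$ and the transversality of the prism are exactly the nonformal inputs, and they are precisely what the paper's proof exploits: it reduces mod $d$ and computes the Koszul complex $\mathrm{Kos}(A[x];px-\varphi^n(d),d)^{\wedge}_p[1/p]$, using the substitution $\varphi^n(d)=d^{p^n}+ph$ to simplify and the regular sequence $(p,d)$ to control the quotient. If you want to save the spirit of your approach, you would have to show directly that $\varphi^n(d)/p$ is ``sufficiently contracting'' in $B$, and that comes down to the same mod-$d$ analysis the paper performs.
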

\begin{proof}
As $\Prism_R$ is a transversal prism for $R \in X_{qrps}$, it is enough to show the following: if $(A,I)$ is a transversal prism, then the natural map
\[ A[1/p]^{\wedge}_I \to A \langle \varphi^n(I)/p \rangle[1/p]^{\wedge}_I \]
is an isomorphism for all $n \geq 0$. Fix some such $n \geq 0$. We may assume $I=(d)$ is principal.  Using the presentation
\[ A \langle \varphi^n(d)/p \rangle = A[x]/(px-\varphi^n(d))^{\wedge}_p,\]
it is sufficient to show that the natural map gives an isomorphism
\[ A[1/p]/d \to \mathrm{Kos}(A[x]; px-\varphi^n(d), d)^{\wedge}_p[1/p],\]
where $\mathrm{Kos}$ denotes a Koszul complex, i.e.~in this case the derived reduction of $A[x]$ modulo $px-\varphi^n(d)$ and $d$. Now $\varphi^n(d) = d^{p^n} + ph$ for some $h \in A$, so the right side above simplifies to 
\[ \mathrm{Kos}(A[x]; px-\varphi^n(d), d)^{\wedge}_p[1/p] \simeq \mathrm{Kos}(A[x]; px-ph, d)^{\wedge}_p[1/p].\]
Relabelling $y=x-h$ this further simplifies to
\[  \mathrm{Kos}(A[y]; py, d)^{\wedge}_p[1/p] \simeq \mathrm{Kos}(A[y]; y,d)^{\wedge}_p[1/p] \simeq A/d[1/p],\]
as wanted; here the first isomorphism arises by noting that $py$ is a nonzerodivisor in $A/d[y]$ by transversality of $(A,I)$ and that the kernel $yA[y]/pyA[y]$ of $A[y]/py \to A[y]/y = A$ is killed by $p$ (and thus disappears on applying $(-)^{\wedge}_p[1/p]$). 
\end{proof}

\subsection{Boundedness of descent data at the boundary}
\label{ss:BoundDescent}

Given an $F$-crystal $\mathcal{M}'$ over $\Prism_{\mathcal{O}_C}$ with descent data on $\mathcal{M'}\langle I/p \rangle[1/p]$, we shall explain how the descent data automatically extends to $\mathcal{M}'[1/p]$ using crucially results from \cite{AMMNBFS} to establish the following extension result for $p$-adic Tate twists. 

In the statement and proof of the following proposition, we are taking fixed points $-^{\varphi=1}$ in the usual (nonderived) sense.

\begin{proposition}[Analytic continuation for $p$-adic Tate twists]
\label{TateTwistRational}
Let $R$ be a qrsp $p$-torsionfree $\mathcal{O}_C$-algebra. The natural map
\[ \Prism_R\{n\}^{\varphi=1}[1/p] \to \Prism_R \langle I/p \rangle\{n\}^{\varphi=1}[1/p]\]
is an isomorphism.
\end{proposition}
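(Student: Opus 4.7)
The plan is to identify both sides of the proposed bijection with the degree-zero part of the rational syntomic cohomology $\mathbf{Q}_p(n)(R)$, using the Beilinson fibre sequence from \cite{AMMNBFS} as the essential input.

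First, I would recall the standard description of the syntomic Tate twist for a qrsp ring $R$:
\[ \mathbf{Z}_p(n)(R) \simeq \mathrm{fib}\bigl(\varphi_n - 1 : \mathrm{Fil}^n_N \Prism_R\{n\} \to \Prism_R\{n\}\bigr), \]
where $\mathrm{Fil}^n_N$ denotes the Nygaard filtration and $\varphi_n$ is the divided Frobenius. Under this description, the left-hand side $H^0(\Prism_R\{n\}^{\varphi=1})[1/p]$ identifies with $H^0(\mathbf{Q}_p(n)(R))$: the fibre of $\varphi-1:\Prism_R\{n\} \to \mathcal{I}^{-n}\Prism_R\{n\}$ forces any $\varphi$-fixed element to live in the Nygaard-filtered piece (where the Frobenius lands back in $\Prism_R\{n\}$), and rationally this recovers the usual $p$-adic Tate twist.

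Second, I would apply the Beilinson fibre sequence of \cite{AMMNBFS}. The relevant consequence is that, rationally (i.e.\ after inverting $p$), the natural map
\[ \mathrm{Fil}^n_N \Prism_R\{n\}[1/p] \longrightarrow \Prism_R\langle I/p\rangle\{n\}[1/p] \]
is a Frobenius-equivariant equivalence. Heuristically, the Nygaard filtration imposes a bound on the $I$-adic order of $\varphi$; rationally, where $I/p$ becomes a genuine element, this is precisely what the rational localization $\Prism_R\langle I/p\rangle = \Prism_R[I/p]^\wedge_p$ encodes. Under this identification, the right-hand side $H^0(\Prism_R\langle I/p\rangle\{n\}^{\varphi=1})[1/p]$ also computes $H^0(\mathbf{Q}_p(n)(R))$, and the map in the proposition becomes the identity under these identifications.

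The main obstacle will be invoking the AMMN result in precisely the form above and tracking all compatibilities: matching the Nygaard filtration (defined via Frobenius pullback) with the rational localization (defined without reference to Frobenius), and verifying Frobenius equivariance at the level of $H^0$. A subsidiary but essential technical point is the injectivity of $\Prism_R\{n\}[1/p] \hookrightarrow \Prism_R\langle I/p\rangle\{n\}[1/p]$, which handles the injectivity half of the $H^0$ bijection directly; this follows from the transversality of the prism $(\Prism_R, I)$ via Lemma~\ref{PrismQRSPTrans}, using that $R$ is $p$-torsionfree qrsp.
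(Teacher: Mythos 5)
Your overall plan of interpreting both sides through rational syntomic cohomology and invoking the Beilinson fibre sequence is headed in the right direction, and the first step (identifying $H^0$ of the left-hand side with $H^0(\mathbf{Q}_p(n)(R))$, plus the transversality-based injectivity) is fine. But the central claim in your second step is false: the natural map $\mathrm{Fil}^n_N\Prism_R\{n\}[1/p] \to \Prism_R\langle I/p\rangle\{n\}[1/p]$ is \emph{not} an equivalence, even rationally. The source is a submodule of $\Prism_R\{n\}[1/p]$, while the target is a module over the strictly larger ring $\Prism_R\langle I/p\rangle[1/p]$; the map factors through the proper inclusion $\Prism_R\{n\}[1/p]\hookrightarrow \Prism_R\langle I/p\rangle\{n\}[1/p]$ and so cannot be surjective. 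The heuristic that "the Nygaard filtration imposes the same bound as the rational localization" conflates the filtration level $\mathrm{Fil}^n_N$ (a submodule of $\Prism_R$) with a quotient/localization construction, and the Beilinson fibre sequence supplies no equivalence of this shape.

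What the Beilinson fibre sequence actually gives is the exact triangle $\mathbb{Q}_p(n)(R) \to \mathbb{Q}_p(n)(R/p) \to (L\Omega_R/\mathrm{Fil}^n_H L\Omega_R)\{n\}[1/p]$, where $\mathbb{Q}_p(n)(R/p)$ is expressed via the PD-envelope $\Prism_R\{I/p\} = A_{\crys}(R)$ --- a strictly \emph{larger} localization than $\Prism_R\langle I/p\rangle$. The actual content of the proposition is therefore the surjectivity statement: given a $\varphi$-fixed class in $\Prism_R\langle I/p\rangle\{n\}[1/p]$, one must show it lifts to a $\varphi$-fixed class in $\Prism_R\{n\}[1/p]$. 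The correct route is to map further to $\mathbb{Q}_p(n)(R/p)$ and then use the fibre sequence to reduce to showing the image in $(L\Omega_R/\mathrm{Fil}^n_H L\Omega_R)\{n\}[1/p]$ vanishes on $H^0$. That vanishing is a genuine computation: after trivializing the Breuil--Kisin twists, one must show that the relevant $\varphi$-fixed elements, when pushed into $L\Omega_R[1/p]$ via the Frobenius, already lie in $\mathrm{Fil}^n_H L\Omega_R[1/p]$; this uses the injection $(L\Omega_R/\mathrm{Fil}^n_H L\Omega_R)[1/p]\hookrightarrow \mathbb{B}_{\dR}^+(R)/I^n$ supplied by the Nygaard filtration, together with the fact that $q-1$ is invertible in $\mathbb{B}_{\dR}^+(R)$ so the $\varphi$-fixed elements have no pole there. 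Your proposal omits this obstruction-vanishing argument, which is the heart of the proof.
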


We note that $\varphi$ is not quite a selfmap of $\Prism_R\{n\}$ -- it only takes values in $\Prism_R\{n\}[1/I]$ -- but being $\varphi$-fixed still makes sense.

\begin{proof}
Recall that the $p$-adic Tate twist attached to $R$ is defined as follows:
\[ \mathbb{Q}_p(n)(R) := \mathrm{fib}\left(\mathrm{Fil}^n_N \Prism_R\{n\} \xrightarrow{\varphi_n-1} \Prism_R\{n\}\right)[1/p],\]
using the Nygaard filtration $\mathrm{Fil}^\bullet_N \Prism_R\subset \Prism_R$ from \cite[Definition 12.1]{BhattScholzePrisms}. Thus, there is a natural injective map
\[ H^0(\mathbb{Q}_p(n)(R)) \to \Prism_R\{n\}^{\varphi=1}[1/p].\]
In fact, we observe that this map is an isomorphism: containment in the correct level of the Nygaard filtration is automatic for $\varphi$-fixed elements of $\Prism_R\{n\}$ as the Nygaard filtration on $\Prism_R$ may be defined (using that $R$ is qrsp) as the $\varphi$-preimage of the $I$-adic filtration on $\Prism_R$ (see \cite[\S 12]{BhattScholzePrisms}).

In particular, it is enough to show that the composition
\[ a: H^0(\mathbb{Q}_p(n)(R)) \to  \Prism_R \langle I/p \rangle\{n\}^{\varphi=1}[1/p]\]
is an isomorphism. For this, recall that since $\Prism_{R/p} = \Prism_R\{I/p\}$, the $p$-adic Tate twist attached to $R/p$ may be written as 
\[ H^0(\mathbb{Q}_p(n)(R/p)) = \left(\Prism_R\{I/p\}\{n\}\right)^{\varphi=1}[1/p],\]
where we implicitly use that inverting $p$ kills any Nygaard graded quotient of $\Prism_{R/p}$. (In fact, as $R/p$ is qrsp $\mathbf{F}_p$-algebra, the complex $\mathbb Q_p(n)(R/p)$ is in fact concentrated in degree $0$ by \cite[Proposition 8.20]{BMS2}; but we do not need this.) Moreover, the natural map 
\[ H^0(\mathbb{Q}_p(n)(R)) \xrightarrow{b} H^0(\mathbb{Q}_p(n)(R/p))\]
 factors as
\[ b: H^0(\mathbb{Q}_p(n)(R)) \xrightarrow{a} \Prism_R \langle I/p \rangle\{n\}^{\varphi=1}[1/p] \xrightarrow{c} H^0(\mathbb{Q}_p(n)(R/p)).\]
 As the maps $a$, $b$ and $c$ are all injective, it suffices to show that $\mathrm{im}(c)$ maps to $0$ in $\mathrm{coker}(b)$. To understand this cokernel, we use the Beilinson fibre sequence \cite[Theorem 6.17]{AMMNBFS}
 \[ \mathbb{Q}_p(n)(R)\rightarrow \mathbb{Q}_p(n)(R/p) \rightarrow \left(L\Omega_R / \mathrm{Fil}^n_H L\Omega_R\right)\{n\} [1/p]\xrightarrow{[1]}\]
which in particular induces an exact sequence
\[ H^0(\mathbb{Q}_p(n)(R))\xrightarrow{b} H^0(\mathbb{Q}_p(n)(R/p)) \xrightarrow{d} H^0(\left(L\Omega_R / \mathrm{Fil}^n_H L\Omega_R\right)\{n\}[1/p]).\]
Our task is then to show that $d \circ c = 0$. Using our chosen $\mathcal{O}_C$-algebra structure to trivialize the Breuil-Kisin twists $\Prism_{\mathcal O_C}\{1\}\cong \frac{1}{q-1}\Prism_{\mathcal O_C}$, we can make the map $c$ and $d$ explicit. First, $c$ is the natural map
\[ \left(\frac{1}{(q-1)^n} \Prism_R \langle I/p \rangle\right)^{\varphi=1}[1/p] \to  \left(\frac{1}{(q-1)^n}\Prism_R\{I/p\}\right)^{\varphi=1}[1/p]. \]
Moreover, by the arguments of \cite[6.18--6.21]{AMMNBFS}, the map $d$ is up to nonzero scalar compatible with the isomorphism $L\Omega_R\cong A_{\crys}(R/p) = \Prism_{R/p}$ and the forgetful map $\mathbb{Q}_p(R/p)\to \Prism_{R/p}[1/p]$. Thus, it remains to show that the image of the natural map
\[
\left(\frac{1}{(q-1)^n} \Prism_R \langle I/p \rangle\right)^{\varphi=1}[1/p]\to \frac{1}{(q-1)^n} \Prism_R\{I/p\}[1/p]\cong \frac{1}{(q-1)^n} \Prism_{R/p}[1/p]\cong \frac{1}{(q-1)^n} L\Omega_R[1/p]
\]
is contained in $\frac{1}{(q-1)^n} \mathrm{Fil}^n_H L\Omega_R [1/p]$. But the Frobenius on $\Prism_R$ induces an injective map
\[
(L\Omega_R/\mathrm{Fil}^n_H L\Omega_R)[1/p]\hookrightarrow \mathbb B_{\dR}^+(R)/I^n,
\]
as follows from the discussion of the Nygaard filtration\footnote{Indeed, as $R$ is qrsp, the Nygaard filtration $\mathrm{Fil}^\bullet_N \Prism_R$ on $\Prism_R$ can be defined as the preimage of the $I$-adic filtration $I^\bullet \Prism_R$ under the Frobenius. Consequently, the Frobenius on $\Prism_R$ gives an injective map $\Prism_R/\mathrm{Fil}^n \Prism_R \to \Prism_R / I^n \Prism_R$. Inverting $p$ then gives an injective map $(\Prism_R/\mathrm{Fil}^n_N \Prism_R)[1/p] \to (\Prism_R/I^n \Prism_R)[1/p] \simeq \mathbb{B}_{\dR}^+(R)/I^n \mathbb{B}_{\dR}^+(R)$. It now remains to observe that $(\Prism_R/\mathrm{Fil}^n_N \Prism_R)[1/p] \simeq L\Omega_R/\mathrm{Fil}^n_H L\Omega_R[1/p]$ by comparing graded pieces in the absolute de Rham comparison.}. Dividing by $(q-1)^n$ and noting that $\varphi(q-1)$ is a generator of $I \mathbb{B}_{dR}^+(R)$, we get an injection
\[
 \frac{1}{(q-1)^n} (L\Omega_R/\mathrm{Fil}^n_H L\Omega_R)[1/p]\hookrightarrow I^{-n} \mathbb B_{\dR}^+(R)/\mathbb B_{\dR}^+(R)
\]
induced by the Frobenius. The composite map
\[
\left(\frac{1}{(q-1)^n} \Prism_R \langle I/p \rangle\right)^{\varphi=1}[1/p]\to I^{-n} \mathbb B_{\dR}^+(R)/\mathbb B_{\dR}^+(R)
\]
is, by the $\varphi$-invariants on the left, simply induced by the natural map $\frac{1}{(q-1)^n} \Prism_R\langle I/p\rangle[1/p] \to I^{-n} \mathbb B_{\dR}^+(R)$, which actually takes values in $\mathbb B_{\dR}^+(R)$ as $q-1$ is invertible in $\mathbb B_{\dR}^+(R)$, as desired; here we used implicitly Lemma~\ref{RationalPresent}. But this implies that the displayed map above is $0$, which then implies by the preceding reasoning that the map
\[ \left(\frac{1}{(q-1)^n} \Prism_R \langle I/p \rangle\right)^{\varphi=1}[1/p] \to  \frac{1}{(q-1)^n} (L\Omega_R/\mathrm{Fil}^n_H L\Omega_R)[1/p]\]
is $0$, as wanted.
\end{proof}

For future use, we collect some properties of the prism of $\mathcal O_C\widehat{\otimes}_{\mathcal O_K} \mathcal O_C$, using crucially Proposition~\ref{TateTwistRational} for the first one. 

\begin{lemma}[Properties of $\Prism_{Y \times_X Y}$]
\label{FixedRational}
Let $R=\mathcal O_C\widehat{\otimes}_{\mathcal O_K} \mathcal O_C$. Write $q_1,q_2 \in \Prism_R$ for the image of $q \in \Prism_{\mathcal{O}_C}$ along the maps $\Prism_{\mathcal{O}_C} \to \Prism_R$ induced by the first and second structure maps $\mathcal{O}_C \to R$.
\begin{enumerate}
\item The natural map
\[ \Prism_R[1/(p(q_1-1)(q_2-1))]^{\varphi=1} \to \Prism_R \langle I/p \rangle[1/(p(q_1-1)(q_2-1))]^{\varphi=1}\]
is an isomorphism on $H^0$.

\item The element $(q_1-1)(q_2-1)$ is invertible in $\Prism_R \langle p/(q_1-1)^p \rangle[1/p]$.

\item The natural maps give a short exact sequence 
\[ 0 \to \Prism_R \to \Prism_R \langle (q_1-1)^p/p \rangle \oplus \Prism_R \langle p/(q_1-1)^p \rangle \to \Prism_R \langle p/(q_1-1)^p, (q_1-1)^p/p \rangle \to 0.\]
\end{enumerate}
\end{lemma}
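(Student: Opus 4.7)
The three parts are essentially independent, and I address them in turn.

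For part (1), my plan is to reduce to Proposition~\ref{TateTwistRational} by clearing denominators. Given $y \in \Prism_R\langle I/p\rangle[1/(p(q_1-1)(q_2-1))]^{\varphi=1}$, I write $y = z/((q_1-1)^{N_1}(q_2-1)^{N_2})$ with $z \in \Prism_R\langle I/p\rangle[1/p]$ for some $N_1, N_2 \geq 0$. The $\varphi$-fixed condition for $y$ translates into $\varphi(z) = [p]_{q_1}^{N_1}[p]_{q_2}^{N_2}z$. Using the ``mixed'' trivialization of the Breuil-Kisin twist $\Prism_R\{N_1+N_2\} = \Prism_R\{N_1\}\otimes\Prism_R\{N_2\}$ in which each factor is trivialized via the corresponding copy of $\mathcal{O}_C$ (so that the generator corresponds to $(q_i-1)^{-1}$ after inverting $q_i-1$), the element $z$ represents a $\varphi$-fixed section of $\Prism_R\langle I/p\rangle\{N_1+N_2\}[1/p]$. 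Proposition~\ref{TateTwistRational} then lifts this to a $\varphi$-fixed section of $\Prism_R\{N_1+N_2\}[1/p]$, and expanding the lift in the same trivialization produces a preimage $y' = z'/((q_1-1)^{N_1}(q_2-1)^{N_2}) \in \Prism_R[1/(p(q_1-1)(q_2-1))]^{\varphi=1}$. Injectivity on $H^0$ is immediate from the injectivity of $\Prism_R \to \Prism_R\langle I/p\rangle$, which follows from the transversality of $\Prism_R$.

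For part (2), the invertibility of $(q_1-1)$ in $\Prism_R\langle p/(q_1-1)^p\rangle[1/p]$ is direct: the defining relation $\alpha(q_1-1)^p = p$, where $\alpha := p/(q_1-1)^p$, shows that $(q_1-1)^p$ has inverse $\alpha p^{-1}$ once $p$ is inverted. For $(q_2-1)$, I will exploit that both $[p]_{q_1}$ and $[p]_{q_2}$ are distinguished generators of $I$, so $[p]_{q_2} = u[p]_{q_1}$ for some $u \in \Prism_R^\times$. Combining this with the expansions $[p]_{q_i} = (q_i-1)^{p-1} + pw_i$ yields the identity
\[ (q_2-1)^{p-1} = u(q_1-1)^{p-1}(1+x), \quad x = (q_1-1)\alpha(w_1 - u^{-1}w_2) \in \Prism_R\langle p/(q_1-1)^p\rangle. \]
Reducing modulo $p$, the ring becomes $(\Prism_R/(p,(q_1-1)^p))[\overline{\alpha}]$; since $(q_1-1)^p \equiv 0$ in this quotient, the element $(q_1-1)$ is nilpotent, so the coefficient of $\overline{\alpha}$ in $\overline{x}$ is nilpotent, and hence $1+x$ is a unit modulo $p$. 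As $\Prism_R\langle p/(q_1-1)^p\rangle$ is $p$-adically complete, $1+x$ is a unit in the full ring. Combined with the invertibility of $u$ and $(q_1-1)^{p-1}$, this gives $(q_2-1)^{p-1}$ (and hence $(q_2-1)$) as a unit in $\Prism_R\langle p/(q_1-1)^p\rangle[1/p]$.

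For part (3), the claimed short exact sequence is the Cech complex for the two-open rational cover of $\Spec(\Prism_R)$ away from $V(p, (q_1-1)^p)$. Its exactness is equivalent to the vanishing of the local cohomology $R\Gamma_{(p,(q_1-1)^p)}(\Prism_R)$ in degrees $\leq 1$, which follows once $p, (q_1-1)^p$ is shown to be a regular sequence on $\Prism_R$. The transversality of $\Prism_R$ gives $p$ as a nonzerodivisor and $\Prism_R/p$ as $I$-torsionfree; since $[p]_{q_1} \equiv (q_1-1)^{p-1} \pmod p$ generates $I \cdot \Prism_R/p$, the element $(q_1-1)$ is a nonzerodivisor on $\Prism_R/p$, and so is any power. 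This yields depth $\geq 2$ at $(p, (q_1-1)^p)$, producing the required short exact sequence. The main subtlety lies in part (2): translating the distinguished-element identity into a concrete invertibility statement requires the modulo-$p$ nilpotence argument combined with $p$-adic completeness, whereas parts (1) and (3) are essentially formal once the trivialization framework (respectively the regular-sequence depth computation) is in place.
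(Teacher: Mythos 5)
Your treatment of part (1) is essentially the paper's argument: both reduce to Proposition~\ref{TateTwistRational} by trivializing Breuil--Kisin twists via $q_1$ and $q_2$; the paper colimits the identifications $\Prism_R\{2n\} \simeq \frac{1}{(q_1-1)^n(q_2-1)^n}\Prism_R$, and your version with unequal exponents $N_1,N_2$ is a harmless variant. In part (2) you use the same key input as the paper ($[p]_{q_1}$ and $[p]_{q_2}$ differ by a unit of $\Prism_R$), and the identity $(q_2-1)^{p-1}=u(q_1-1)^{p-1}(1+x)$ checks out, but two of the supporting claims are off. The reduction of $\Prism_R\langle p/(q_1-1)^p\rangle$ modulo $p$ is $(\Prism_R/p)[\alpha]/(\alpha(q_1-1)^p)$, not $(\Prism_R/(p,(q_1-1)^p))[\alpha]$, and in it $(q_1-1)$ is \emph{not} nilpotent (transversality of $\Prism_R$ makes it a nonzerodivisor on $\Prism_R/p$, hence on the polynomial ring as well); what is nilpotent is $\alpha(q_1-1)$, since $(\alpha(q_1-1))^p=\alpha^{p-1}p$. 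Your $x$ does carry a factor of $\alpha(q_1-1)$, so the conclusion survives, but the reasoning should say so explicitly. Also, $\Prism_R\langle p/(q_1-1)^p\rangle$ is $(q_1-1)$-adically (not a priori $p$-adically) complete; the cleanest repair is to observe $x\in(q_1-1)\Prism_R\langle p/(q_1-1)^p\rangle$ and invoke that completeness directly. (The paper instead argues that the quotient by $(q_2-1)$ is an $\mathbf{F}_p$-algebra; either route is fine once corrected.)

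Part (3) has a real gap. The sequence in the lemma is built from the \emph{completed} rational localizations $\Prism_R\langle(q_1-1)^p/p\rangle$, $\Prism_R\langle p/(q_1-1)^p\rangle$, $\Prism_R\langle p/(q_1-1)^p,(q_1-1)^p/p\rangle$, whereas the local cohomology $R\Gamma_{(p,(q_1-1)^p)}(\Prism_R)$ and your Cech comparison live with the uncompleted rings $\Prism_R[(q_1-1)^p/p]$, $\Prism_R[p/(q_1-1)^p]$ (or, in the Koszul picture, with $\Prism_R[1/(q_1-1)]$, $\Prism_R[1/p]$, which are different rings again). Passing from the uncompleted to the completed exact sequence is exactly where the content is: one must show the derived $(p,(q_1-1)^p)$-completion of each term is concentrated in degree $0$ so that exactness is preserved, and your argument does not address this. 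The paper sidesteps the issue by exhibiting the sequence as the $(p,u)$-completed base change of the Mayer--Vietoris sequence for the blowup of $\mathrm{Spec}(\mathbf{Z}_p[u])$ at $(p,u)$ along the $(p,u)$-completely flat map $\mathbf{Z}_p[u]\to\Prism_R$, $u\mapsto(q_1-1)^p$, using noetherianity of the base to control the completions. Your depth-$\geq 2$ observation is the right regularity input, but as stated it proves the uncompleted sequence, not the one claimed.
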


\begin{proof} 
\begin{enumerate}
\item Note that we have a $\varphi$-equivariant isomorphisms $\Prism_{\mathcal{O}_C}\{n\} \simeq \frac{1}{(q-1)^n} \Prism_{\mathcal{O}_C}$ for all $n \geq 0$. By base change, we obtain natural isomorphisms
\[ \Prism_R\{2n\} \simeq \frac{1}{(q_1-1)^n} \Prism_R\{n\} \simeq \frac{1}{ (q_1-1)^n (q_2-1)^n } \Prism_R.\]
As the direct limit over $n$ of the terms on the right gives $\Prism_R[1/(q_1-1)(q_2-1)]$, it suffices to see that for all $n\geq 0$, the natural map
\[\Prism_R\{n\}^{\varphi=1}[1/p]\to \Prism_R\langle I/p\rangle\{n\}^{\varphi=1}[1/p] \]
is an isomorphism on $H^0$, which follows from Proposition~\ref{TateTwistRational}.

\item Since $(q_1-1)^p \mid p$ in $\Prism_R \langle p/(q_1-1)^p \rangle$, it is clear that $(q_1-1)$ is invertible in $\Prism_R \langle p/(q_1-1)^p \rangle[1/p]$. To show $(q_2-1)$ is invertible, we shall check the stronger statement that 
\[ A := \Prism_R \langle p/(q_1-1)^p \rangle/(q_2-1)\] 
is an $\mathbf{F}_p$-algebra.  We first observe that $[p]_{q_1} = [p]_{q_2} \cdot u$ for a unit $u \in \Prism_R$: this follows from the realization of $\Prism_R$ as a suitable prismatic envelope of $\Prism_{\mathcal{O}_C} \otimes \Prism_{\mathcal{O}_C}$. As $[p]_{q_1} = (q_1-1)^{p-1} \mod p\Prism_R$ and similarly for $[p]_{q_2}$, we obtain an equation of the form
\[ (q_1-1)^{p-1} = (q_2-1)^{p-1} \cdot u  + pv \in \Prism_R\]
for $u,v \in \Prism_R$. This allows us to write
\begin{equation}
\label{prismaticq}
 (q_1-1)^p = p(q_1-1)x \mod (q_2-1)\Prism_R.
\end{equation}
Our task is to show that
\[ A = \Prism_R \langle p/(q_1-1)^p \rangle/(q_2-1) = \left(\Prism_R[z]/(z (q_1-1)^p - p)\right)^{\wedge}_{(q_1-1)}/ (q_2-1) \]
is an $\mathbf{F}_p$-algebra. Thanks to \eqref{prismaticq}, we have
\[ A = \left(\left(\Prism_R/(q_2-1)\right)[z]/(zp(q_1-1)x - p)\right)^{\wedge}_{(q_1-1)} = \left(\left(\Prism_R/(q_2-1)\right)[z]/(p\epsilon - p)\right)^{\wedge}_{(q_1-1)},\]
where $\epsilon = z(q_1-1)x$. But $q_1-1$ is topologically nilpotent (due to the completion operation), so the same holds for $\epsilon$, whence $\epsilon-1$ is a unit. The above presentation simplifies to
\[ A = \left(\left(\Prism_R/(q_2-1)\right)[z]/p\right)^{\wedge}_{(q_1-1)},\]
which is clearly an $\mathbf{F}_p$-algebra. 

\item First, consider the regular ring $A := \mathbf{Z}_p[u]$. Since blowing up at a regular ideal does not change $\mathcal{O}$-cohomology,  the Mayer-Vietoris sequence for the standard charts of the blowup $X$ of $\mathrm{Spec}(A)$ at the ideal $(p,u)$ gives an exact sequence
\[ 0 \to A \simeq R\Gamma(X,\mathcal{O}_X) \to A[u/p] \oplus A[p/u] \to A[p/u,u/p] \to 0\]
of $A$-modules. Moreover, each of the $4$ terms that appears is a noetherian $A$-algebra, and the maps in the sequence arises as linear combinations of $A$-algebra maps. Now consider the composition $A \to \Prism_{\mathcal{O}_C} \xrightarrow{i_1} \Prism_R$ given by sending $u$ to $(q_1-1)^p$. Each map in this composition is $(p,u)$-completely flat, and hence so is the composition. The $(p,u)$-completed base change of the above sequence along $A \to \Prism_R$ then gives an exact triangle
\[ \Prism_R \to \Prism_R \langle (q_1-1)^p/p \rangle \oplus \Prism_R \langle p/(q_1-1)^p \rangle \to \Prism_R \langle p/(q_1-1)^p, (q_1-1)^p/p \rangle \]
in the derived category. Since $A \to \Prism_R$ is $(p,u)$-completely flat, each term of the above triangle is $(p,u)$-completely flat over a noetherian ring, and is thus concentrated in degree $0$. Thus, the above triangle is in fact an exact sequence, as wanted.
\qedhere
\end{enumerate}
\end{proof}

The main result of this subsection is the following:

\begin{proposition}[Boundedness of descent data at the boundary of the open unit disc]
\label{ACDescent}
Let $\mathcal{M} \in \mathrm{Vect}^{\varphi}(\Prism_{\mathcal{O}_C})$ be an $F$-crystal over $\Prism_{\mathcal{O}_C}$. Assume that the base change $\mathcal{M}\langle I/p \rangle[1/p] \in \mathrm{Vect}^{\varphi}(\Prism_{\mathcal{O}_C}\langle I/p \rangle[1/p])$ is provided with descent data with respect to $Y \to X$ (i.e., we are given a lift to $\mathrm{Vect}^{\varphi}(X_{qrsp},\Prism_\bullet \langle I/p \rangle[1/p])$. Then this descent data extends uniquely to $\mathcal{M}[1/p] \in  \mathrm{Vect}^{\varphi}(\Prism_{\mathcal{O}_C}[1/p])$.
\end{proposition}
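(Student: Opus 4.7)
The plan is to use the Mayer--Vietoris decomposition from Lemma~\ref{FixedRational}(3), which remains exact after inverting $p$ and after tensoring with the (flat) internal Hom module $\mathcal{N} := \mathcal{H}om(p_1^*\mathcal{M}, p_2^*\mathcal{M})$. This reduces the problem to constructing $\alpha$ on each of Chart A $:= \Prism_R\langle (q_1-1)^p/p\rangle[1/p]$ and Chart B $:= \Prism_R\langle p/(q_1-1)^p\rangle[1/p]$ (where $R = \mathcal{O}_C\widehat\otimes_{\mathcal{O}_K}\mathcal{O}_C$) such that the two extensions agree on the overlap. Uniqueness of the extension follows from the injectivity of restriction to $\Prism_R\langle I/p\rangle[1/p]$ on vector-bundle sections (via Lemma~\ref{PrismQRSPTrans}). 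Throughout, I view $\alpha$ as a $\varphi$-invariant section of $\mathcal{N}$ over $\Prism_R\langle I/p\rangle[1/p]$ with respect to the induced $F$-crystal structure $\varphi_{\mathcal{N}} : \varphi^*\mathcal{N}[1/I] \simeq \mathcal{N}[1/I]$ inherited from $\mathcal{M}$.

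The extension to Chart B is essentially formal. By Lemma~\ref{FixedRational}(2), $(q_1-1)(q_2-1)$ is a unit on Chart B; combined with the congruence $[p]_{q_1} \equiv (q_1-1)^{p-1} \pmod{p}$ and the invertibility of $p$, this makes $[p]_{q_1}/p$ a unit on Chart B. Hence the natural map $\Prism_R\langle I/p\rangle[1/p] \to \text{Chart B}$ exists, and base change of $\alpha$ along this map provides the desired extension.

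Chart A is the main step, and is where Lemma~\ref{FixedRational}(1)---whose proof is the key application of the Beilinson fibre sequence through Proposition~\ref{TateTwistRational}---enters. The plan is to upgrade Lemma~\ref{FixedRational}(1) from the structure sheaf $\Prism_R$ to the $F$-crystal $\mathcal{N}$: decomposing the $F$-isocrystal $\mathcal{N}[1/I, 1/p]$ into a sum of Breuil--Kisin twists $\mathcal{O}_\Prism\{n\}$ via a Dieudonne--Manin type argument (possibly after passing to a suitable cover and then descending), one reduces the required $H^0$ statement on $\varphi$-invariants to Proposition~\ref{TateTwistRational} applied twist by twist. This promotes $\alpha$ to a $\varphi$-invariant section of $\mathcal{N}$ over $\Prism_R[1/(p(q_1-1)(q_2-1))]$, and hence over Chart A with the divisor $(q_1-1)(q_2-1)=0$ removed. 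To extend across this divisor on Chart A, I would leverage the Frobenius-equivariance of $\alpha$: using $\varphi(q_i-1) = (q_i-1)[p]_{q_i}$, any putative pole of $\alpha$ along $\{q_i-1=0\}$ would have to satisfy a contracting relation in its order and thus vanish. The main obstacle is executing this upgrade of Lemma~\ref{FixedRational}(1) to an arbitrary $F$-crystal in a way that remains compatible with the Frobenius pole-control on Chart A; the remainder is Mayer--Vietoris bookkeeping, using that the Chart A and Chart B extensions restrict compatibly to the original $\alpha$ (after further base change to the $(q_1-1)(q_2-1)$-inverted overlap) and hence agree on $\Prism_R\langle (q_1-1)^p/p, p/(q_1-1)^p\rangle[1/p]$.
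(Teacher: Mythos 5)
Your skeleton is right — the argument does proceed by producing $\alpha$ on Chart A $= \Prism_R\langle(q_1-1)^p/p\rangle[1/p]$ and Chart B $= \Prism_R\langle p/(q_1-1)^p\rangle[1/p]$ separately and glueing via Lemma~\ref{FixedRational}(3), and you correctly identify that Lemma~\ref{FixedRational}(1), via Proposition~\ref{TateTwistRational} and the Beilinson fibre sequence, is the key analytic input. But the assignment of ``easy'' and ``hard'' to the two charts is reversed, and the easy step as you state it is actually incorrect.

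The Chart~B step is not formal, and the map you invoke does not exist. There is no natural $\Prism_R$-algebra map $\Prism_R\langle I/p\rangle[1/p]\to\Prism_R\langle p/(q_1-1)^p\rangle[1/p]$: writing $I=([p]_{q_1})$ and using $[p]_{q_1}\equiv(q_1-1)^{p-1}\bmod p$, on Chart~B the element $[p]_{q_1}/p$ is a unit (as you observe), but it is not power-bounded — its inverse $p/[p]_{q_1}$ is, roughly, $p/(q_1-1)^{p-1}=(q_1-1)\cdot(p/(q_1-1)^p)$, which is topologically nilpotent. Since $\Prism_R\langle I/p\rangle$ is the $p$-adic completion of $\Prism_R[I/p]$, a $\Prism_R$-algebra map out of $\Prism_R\langle I/p\rangle[1/p]$ requires $I/p$ to land in the power-bounded subring; that fails here. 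Geometrically, the closed disc $\{|I|\le|p|\}$ and the annulus $\{|p|\le|(q_1-1)^p|\}$ around the boundary are not nested: at a point where $|(q_1-1)^p|=1>|p|$ one has $|[p]_{q_1}|\approx|(q_1-1)^{p-1}|>|p|$, so the point lies in Chart~B but not in $\{|I|\le|p|\}$. So the ``essentially formal'' base change is the step that fails. Meanwhile Chart~A \emph{is} the formal part: one applies Remark~\ref{ACFCrys} to propagate the descent isomorphism to $\Prism_R\langle\varphi(I)/p\rangle[1/p]$ by the Frobenius trick on the open disc, and then observes $\varphi(I)\subset p\,\Prism_R\langle(q_1-1)^p/p\rangle$, so one can further base change to Chart~A.

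For Chart~B — the genuinely hard step — the Dieudonn\'e--Manin decomposition you propose for $\mathcal{N}[1/I,1/p]$ is both unavailable and unnecessary. $F$-crystals over $\Prism_{\mathcal{O}_C}$ (BKF-modules) are not, even up to isogeny, sums of Breuil--Kisin twists $\mathcal{O}_\Prism\{n\}$ (e.g.\ Fargues' Theorem~\ref{FarguesShtukaClassify} shows there is an extra $B_{\dR}^+$-lattice parameter). The correct mechanism is \cite[Lemma~4.26]{BMS1}: since $\mathcal{M}$ is an $F$-crystal over $\Prism_{\mathcal{O}_C}$, after inverting $\mu=q-1$ it trivializes, $\mathcal{M}[1/(q-1)]\simeq T\otimes_{\mathbf{Z}_p}\Prism_{\mathcal{O}_C}[1/(q-1)]$ with $T$ the \'etale realization and trivial $\varphi$-action. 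Choosing a basis of $T$ turns $\alpha_{\langle I/p\rangle}$ into a matrix with entries in $H^0\!\left(\Prism_R\langle I/p\rangle[1/(p(q_1-1)(q_2-1))]^{\varphi=1}\right)$. Only now does Lemma~\ref{FixedRational}(1) enter, applied to the structure sheaf (no upgrade needed): the entries lie in $H^0(\Prism_R[1/(p(q_1-1)(q_2-1))]^{\varphi=1})$, and then Lemma~\ref{FixedRational}(2) places them in Chart~B directly, with no residual divisor to cross. The Frobenius pole-removal argument you sketch for Chart~A is thus not needed anywhere in the proof.
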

\begin{proof}
Write $R = \mathcal{O}_C \widehat{\otimes}_{\mathcal{O}_K} \mathcal{O}_C$, so $p_1^* \mathcal{M}[1/p]$ and $p_2^* \mathcal{M}[1/p]$ are vector bundles over $\Prism_R[1/p]$. By the assumption on $\mathcal{M}$, we have an isomorphism 
\[ \alpha_{\langle I/p \rangle}:p_1^* \mathcal{M} \langle I/p \rangle [1/p] \simeq p_2^* \mathcal{M} \langle I/p \rangle[1/p]\]
in $\mathrm{Vect}^{\varphi}( \Prism_R \langle I/p \rangle[1/p])$ satisfying the cocycle condition. Our task is to extend this to an isomorphism
\[ \alpha:p_1^* \mathcal{M}[1/p] \simeq p_2^* \mathcal{M}[1/p]\] 
in $\mathrm{Vect}(\Prism_R[1/p])$; the cocycle condition as well as the $\varphi$-equivariance will then be automatic from that for $\alpha_{\langle I/p \rangle}$ as $\Prism_\bullet[1/p] \to \Prism_\bullet \langle I/p \rangle[1/p]$ is injective on $X_{qrsp}$. In fact, it suffices to check that $\alpha_{\langle I/p \rangle}$ carries $p_1^* \mathcal{M}[1/p]$ into $p_2^* \mathcal{M}[1/p]$; applying the same reasoning to the inverse will prove the claim. 

First, we observe that Remark~\ref{ACFCrys} gives descent data over $\Prism_R \langle \varphi(I)/p \rangle[1/p]$. In particular, since $\varphi(I) \subset p \Prism_R \langle (q_1-1)^p/p \rangle$, there is a unique $\varphi$-equivariant map 
\[ \alpha_{\langle (q_1-1)^p/p \rangle}:p_1^* \mathcal{M} \langle (q_1-1)^p/p \rangle [1/p] \to p_2^* \mathcal{M} \langle (q_1-1)^p/p \rangle\] 
of finite projective modules over $\Prism_R \langle (q_1-1)^p/p \rangle[1/p]$ extending $\alpha_{\langle I/p \rangle}$ over $\Prism_R \langle I/p \rangle[1/p]$. 

Next, we extend to $\Prism_R \langle p/(q_1-1)^p \rangle[1/p]$. For this, let $T = T(\mathcal{M})$ be the finite free $\mathbf{Z}_p$-module obtained from the \'etale realization of $\mathcal{M}$. By \cite[Lemma 4.26]{BMS1}, there is a $\varphi$-equivariant identification 
\[ \mathcal{M'}[\frac{1}{q-1}] \simeq T \otimes_{\mathbf{Z}_p} \Prism_{\mathcal{O}_C} [\frac{1}{q-1}]\]
of vector bundles over $\Prism_{\mathcal{O}_C}[\frac{1}{q-1}]$ (where $T$ has the trivial $\varphi$-action).  Choosing a basis for $T$, the isomorphism $\alpha_{\langle I/p \rangle}$ gives  a matrix with coefficients in $H^0$ of
 \[ \Prism_R \langle I/p \rangle[1/(p(q_1-1)(q_2-1))]^{\varphi=1}.\] 
 Lemma~\ref{FixedRational} (1) shows that the coefficients in fact lie in $H^0$ of
 \[ \Prism_{R}[1/(p(q_1-1)(q_2-1))]^{\varphi=1}.\] 
By Lemma~\ref{FixedRational} (2), the coefficients then also lie in 
\[ \Prism_{R} \langle p/(q_1-1)^p \rangle[1/p].\] 
These coefficients define a map 
\[ \alpha_{\langle p/(q_1-1)^p \rangle}:p_1^* \mathcal{M}\langle p/(q_1-1)^p \rangle[1/p] \to p_2^* \mathcal{M}\langle p/(q_1-1)^p \rangle[1/p] \]
of finite projective modules over $\Prism_R \langle p/(q_1-1)^p \rangle[1/p]$ that is compatible with the map $\alpha_{\langle (q_1-1)^p/p \rangle}$ after base change to $\Prism_R \langle p/(q_1-1)^p, (q_1-1)^p/p \rangle [1/p]$. 

Inverting $p$ in the exact sequence appearing in Lemma~\ref{FixedRational} (3) and tensoring with the finite projective $\Prism_R[1/p]$-module $\mathrm{Hom}_{\Prism_R[1/p]}(p_1^* \mathcal{M}[1/p],p_2^* \mathcal{M}[1/p])$, we can patch together the maps $\alpha_{\langle (q_1-1)^p/p \rangle}$ and $\alpha_{\langle p/(q_1-1)^p \rangle}$ constructed above to obtain a map $\alpha:p_1^* \mathcal{M}[1/p] \to p_2^*\mathcal{M}[1/p]$ of finite projective $\Prism_R[1/p]$-modules extending $\alpha_{\langle I/p \rangle}$, as wanted.
\end{proof}

\subsection{Proof of essential surjectivity}
\label{ss:PfEssSur}

We can now prove the promised theorem:

\begin{proof}[Proof of essential surjectivity in Theorem~\ref{MainThm}]
Fix $L \in \mathrm{Rep}^{\crys}_{\mathbf{Z}_p}(G_K)$. Consider the weakly admissible filtered $\varphi$-module $(D,\varphi_D,\mathrm{Fil}^*)$ attached to the $G_K$-representation $L[1/p]$. We shall attach a prismatic $F$-crystal $\mathcal{M}(D)_{int}$ over $\Prism_\bullet$ to $(D,\varphi_D,\mathrm{Fil}^*)$ on $X_{qrsp}$ with \'etale realization given by $L[1/p]$ after inverting $p$; we then check that $\mathcal{M}(D)_{int}$ can be constructed to have \'etale realization $L$ on the nose.

First, we attach an $F$-crystal $\mathcal{M}(D)_{bdd}$ over $\Prism_\bullet[1/p]$ on $X_{qrsp}$ to $(D,\varphi_D,\mathrm{Fil}^*)$, and also construct an extension of the value $\mathcal{M}(D)_{bdd}(Y)$ to an $F$-crystal $\mathcal{M}'$ over $\Prism_{\mathcal{O}_C}$.  Consider the compatible system of $F$-crystals  
\[ \mathcal{M}(D) := \{ \mathcal{M}(D)_{\langle \varphi^n(I)/p \rangle} \in \mathrm{Vect}(X_{qrsp}, \Prism_\bullet \langle \varphi^n(I)/p \rangle[1/p])\}_{n \geq 1} \]
coming from Remark~\ref{ACFCrys}. The value $\mathcal{M}(D)(Y)$ can be regarded as an $F$-crystal on $\mathrm{Spa}(\Prism_{\mathcal{O}_C}) - \{p=0\}$. To construct the objects mentioned in the first sentence of this paragraph, by Proposition~\ref{ACDescent}, it suffices to explain why the $F$-crystal $\mathcal{M}(D)(Y)$ over $\mathrm{Spa}(\Prism_{\mathcal{O}_C}) - \{p=0\}$ extends to an $F$-crystal $\mathcal{M}'$ over $\Prism_{\mathcal{O}_C}$. For this\footnote{The argument we give is an $A_{\inf}$-variant of the analogous result in Kisin's \cite{Kisin}. It relies ultimately on Berger's observation (see \cite[\S IV.2]{BergerPDE} and \cite[Theorem 1.3.8]{Kisin}) translating weak admissibility of $D$ into a property of $\mathcal{M}(D)(Y)$ via Kedlaya's slope filtration results \cite{KedlayaMonodromy}.}, by gluing on the adic space and using \cite[Theorem 14.2.1]{Berkeley} to handle the difference between $F$-crystals on $\mathrm{Spec}(\Prism_{\mathcal{O}_C})$ and $\mathrm{Spa}(\Prism_{\mathcal{O}_C})-\{x_k\}$, it suffices to show that the base changed $\varphi$-module $\mathcal{M}(D)(Y)_{\mathscr{R}}$ over the Robba ring $\mathscr{R}$ for $\Prism_{\mathcal{O}_C}$\footnote{We refer to \cite[Definition 1.8.1]{FFCurve} for the definition. This ring is sometimes also called the extended Robba ring $\widetilde{\mathcal{R}}$, e.g., as in \cite[Definition 13.4.3]{Berkeley}.} extends (as a $\varphi$-module) to the integral Robba ring $\mathscr{R}^{\text{int}}$ (called $\widetilde{\mathcal{R}}^{\text{int}}$ in \cite[Definition 12.3.1]{Berkeley}). It suffices to show that $\mathcal{M}(D)(Y)_{\mathscr{R}}$ is trivial as a $\varphi$-module over $\mathscr{R}$. Now $\varphi$-modules over $\mathscr{R}$ can be identified with vector bundles on the Fargues--Fontaine curve $X_{FF}$ by \cite[Corollary 11.2.22]{FFCurve}. By the weak admissibility of $(D,\varphi_D,\mathrm{Fil}^*)$ and  \cite[Proposition 10.5.6]{FFCurve} and matching constructions, the vector bundle on $X_{FF}$ attached to $\mathcal{M}(D)(Y)_{\mathscr{R}}$ is semistable of slope $0$. As all such vector bundles are trivial by \cite[Theorem 8.2.10 (1)]{FFCurve}, the desired claim follows.

Next, we explain why the descent data on $\mathcal{M}'[1/p]$ can be used to construct a $G_K$-equivariant structure on $\mathcal{M}'$. Note that $\mathcal{M}'[1/I]^{\wedge}_p \in \mathrm{Vect}^{\varphi}(\Prism_{\mathcal{O}_C}[1/I]^{\wedge}_p)$ has the form $T \otimes_{\mathbf{Z}_p} \Prism_{\mathcal{O}_C}[1/I]^{\wedge}_p$ for a finite free $\mathbf{Z}_p$-module $T$ with trivial $\varphi$-action (Corollary~\ref{LocSysLaurentFCrysIsog}). Inverting $p$ shows that 
\[ \mathcal{M}(D)_{bdd}(Y) \otimes_{\Prism_{\mathcal{O}_C}[1/p]} \Prism_{\mathcal{O}_C}[1/I]^{\wedge}_p[1/p] = \mathcal{M}'[1/I]^{\wedge}_p[1/p] = T[1/p] \otimes_{\mathbf{Q}_p} \Prism_{\mathcal{O}_C}[1/I]^{\wedge}_p[1/p].\]
As the left side is the value of $Y$ of an $F$-crystal on $X$, it carries a natural $G_K$-equivariant structure. Consequently, the right side $T[1/p] \otimes_{\mathbf{Q}_p}  \Prism_{\mathcal{O}_C}[1/I]^{\wedge}_p \in \mathrm{Vect}^{\varphi}( \Prism_{\mathcal{O}_C}[1/I]^{\wedge}_p[1/p])$ is naturally a $G_K$-equivariant object of 
\[ \mathrm{Vect}(\mathbf{Z}_p) \otimes_{\mathbf{Z}_p} \mathbf{Q}_p \stackrel{(-)^{\varphi=1}}{\simeq} \mathrm{Vect}^{\varphi}(\Prism_{\mathcal{O}_C}[1/I]^{\wedge}_p) \otimes_{\mathbf{Z}_p} \mathbf{Q}_p \subset \mathrm{Vect}^{\varphi}( \Prism_{\mathcal{O}_C}[1/I]^{\wedge}_p[1/p]).\]
It follows that $T[1/p]$ is naturally a finite dimensional $\mathbf{Q}_p$-representation of $G_K$. By picking a $G_K$-stable $\mathbf{Z}_p$-lattice $L'$ in this representation and adjusting our choice of $\mathcal{M}'$ along the divisor $\{p=0\} \subset \mathrm{Spec}(\Prism_{\mathcal{O}_C}) - \{x_k\}$ by Beauville-Laszlo glueing (and using Kedlaya's theorem \cite{KedlayaAinf} or \cite[Lemma 4.6]{BMS1} that vector bundles on $\mathrm{Spec}(\Prism_{\mathcal{O}_C}) - \{x_k\}$ extend uniquely to $\mathrm{Spec}(\Prism_{\mathcal{O}_C})$), we can then arrange that $T=L'$ itself is a finite free $\mathbf{Z}_p$-module equipped with a $G_K$-action, or equivalently that $\mathcal{M}' \in \mathrm{Vect}^{\varphi}(\Prism_{\mathcal{O}_C})$ comes equipped with a $G_K$-equivariant structure extending the one on $\mathcal{M}(D)_{bdd}(Y) = \mathcal{M}'[1/p]$ coming from the descent data.

We now claim that the descent data on $\mathcal{M}'[1/p] = \mathcal{M}(D)_{bdd}$ restricts to descent data on $\mathcal{M}'$. In other words, letting $R=\mathcal{O}_C \widehat{\otimes}_{\mathcal{O}_K} \mathcal{O}_C$, the isomorphism 
\[ \alpha:p_1^* \mathcal{M}'[1/p] \simeq p_2^* \mathcal{M}'[1/p]\]
 in $\mathrm{Vect}^{\varphi}(\Prism_R[1/p])$ carries $p_1^* \mathcal{M}'$ isomorphically onto $p_2^* \mathcal{M}'$. In fact, by contemplating the inverse, it suffices to show that $\alpha$ carries $p_1^* \mathcal{M}'$ into $p_2^* \mathcal{M}'$. Now $(p,I) \subset \Prism_R$ is an ideal generated by a regular sequence of length $2$ as $\Prism_R$ is $(p,I)$-completely flat over $\Prism_{\mathcal{O}_C}$ via either structure map. As sections of vector bundles are insensitive to removing closed sets defined by such ideals, it suffices to check our claim on $\mathrm{Spec}(\Prism_R) - V(p,I)$. Since everything is clear after inverting $p$, we may further use Beauville-Laszlo glueing to reduce to checking the statement after $p$-completing, i.e. we must check that the natural isomorphism
\[ \alpha \otimes_{\Prism_R[1/p]} \Prism_R[1/I]^{\wedge}_p[1/p]:p_1^* \mathcal{M}'[1/I]^{\wedge}_p[1/p] \simeq p_2^* \mathcal{M}'[1/I]^{\wedge}_p[1/p]\]
carries $p_1^* \mathcal{M}'[1/I]^{\wedge}_p$ into $p_1^* \mathcal{M}'[1/I]^{\wedge}_p$. Now the base change map
\[ \mathrm{Vect}^{\varphi}(\Prism_R[1/I]^{\wedge}_p) \to \mathrm{Vect}^{\varphi}(\Prism_{R,\perf}[1/I]^{\wedge}_p) \]
is fully faithful (and in fact an equivalence) by the \'etale comparison theorem (see Proposition~\ref{BanachPerfFrobMod}), and therefore also after inverting $p$ on both sides. Thus, it suffices to check our desired containment after pullback to $\Prism_{R,\perf}$. But we know that $\Prism_{R,\perf} \stackrel{a}{\simeq} R\Gamma(\mathrm{Spf}(R)_\eta, A_{\inf}(\mathcal{O}^+)) \stackrel{a}{\simeq} \mathrm{Cont}(G_K, \Prism_{\mathcal{O}_C})$. As inverting $I$ and $p$-completing turns almost isomorphisms to isomorphisms, the desired claim follows from $G_K$-equivariance of the lattice $\mathcal{M}'[1/I]^{\wedge}_p \subset \mathcal{M}'[1/I]^{\wedge}_p[1/p]$.

The previous paragraphs lift $\mathcal{M}' \in \mathrm{Vect}^{\varphi}(\Prism_{\mathcal{O}_C})$ to an object $\mathcal{M}(D)_{int} \in \mathrm{Vect}^{\varphi}(X_{qrsp}, \Prism_\bullet)$; the construction depended on the choice of the $G_K$-stable lattice $T=L'$ in $T[1/p]$ and has the feature that $T(\mathcal{M}(D)_{int}) = L'$. To finish the proof, observe that the argument in Theorem~\ref{FCrystoCrysGal} and the fact that $\mathcal{M}(D)_{int}$ recovers $\mathcal{M}(D)_{\langle I/p \rangle}$ over $\Prism_\bullet \langle I/p \rangle[1/p]$ shows that $T(\mathcal{M}(D)_{int})[1/p] \simeq L[1/p]$. Running the modification argument two paragraphs above with $L'=L$ then shows that we can also arrange that $T(\mathcal{M}(D)_{int}) = L'$, finishing the proof.
\end{proof}

\newpage
\section{Crystalline Galois representations and Breuil-Kisin modules}
\label{sec:BKCrys}

The goal of this section is to relate lattices in crystalline Galois representations to Breuil-Kisin modules over a Breuil-Kisin prism $\mathfrak{S}$ attached to $\mathcal{O}_K$, as in \cite{Kisin}. More precisely, in \S \ref{ss:FFKinfinity}, we give a direct  proof that the \'etale realization functor for $F$-crystals on $\mathfrak{S}$ is fully faithful. Using this result as well as our Theorem~\ref{MainThm}, we recover the main full faithfulness results from \cite{Kisin} in \S \ref{ss:KisinFF}. Finally, in \S \ref{ss:LogConnBK}, we explain why the $F$-crystals over $\mathfrak{S}$ arising from prismatic $F$-crystals over $\mathcal{O}_K$ (or equivalently crystalline Galois representations by Theorem~\ref{MainThm}) admit a natural logarithmic connection over the local ring at the Hodge-Tate point of $\mathfrak{S}$ in characteristic $0$; the argument there shows that the logarithmic connection has a natural integral avatar (in the form of a descent isomorphism over the  ring $\mathfrak{S}^{(1)}$, see Construction~\ref{PrismaticCechNerve}), which we hope shall shed some light on the integrality properties of the connection $N_\nabla$ from \cite[Corollary 1.3.15]{Kisin}.

\begin{notation}[A Breuil-Kisin prism over $\mathcal{O}_K$]
\label{NotBK}
Let $(\mathfrak{S}, E(u)) = (W\llbracket u \rrbracket, (E(u))$ be a Breuil-Kisin prism in $X_\Prism$ attached to the choice of a uniformizer $\pi \in \mathcal{O}_K$, as in Example~\ref{BKAinfPrismOK}. Choose a compatible system of $p$-power roots $\underline{\pi} \in \lim_{x \mapsto x^p} \mathcal{O}_C$ of $\pi$ in $\mathcal{O}_C$. This choice yields a unique $\delta$-$W$-algebra map $\mathfrak{S} \to A_{\inf}$ sending $u$ to $[\underline{\pi}]$ with the composition $\mathfrak{S} \to A_{\inf} \xrightarrow{\tilde{\theta}} \mathcal{O}_C$ being identified with $\mathfrak{S} \to \mathfrak{S}/E(u) \simeq \mathcal{O}_K \subset \mathcal{O}_C$; thus, we get a map $(\mathfrak{S},E(u)) \to (A_{\inf}, \ker(\tilde{\theta}))$ in $X_\Prism$. 
\end{notation}

\subsection{Full faithfulness of the \'etale realization over $\mathfrak{S}$}
\label{ss:FFKinfinity}

The goal of this subsection is to prove the following theorem:

\begin{theorem}[Kisin {\cite[Proposition 2.1.12]{Kisin}}]
\label{EtaleRealizeBK}
The \'etale realization functor
\[ T:\mathrm{Vect}^{\varphi}(\mathfrak{S}) \to \mathrm{Vect}^{\varphi}(\mathfrak{S}[1/E(u)]^{\wedge}_p) \]
is fully faithful.
\end{theorem}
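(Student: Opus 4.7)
The plan is to reduce full faithfulness of $T$ to a statement about $\varphi$-invariants and then transfer the problem across the prism map $(\mathfrak{S}, E(u)) \to (A_{\inf}, \tilde{\xi})$ from Notation~\ref{NotBK}. Passing to the internal Hom $\underline{\mathrm{Hom}}(M,N) \in \mathrm{Vect}^{\varphi}(\mathfrak{S})$, which inherits a Frobenius structure after inverting $E(u)$, full faithfulness follows once I prove that for every $P \in \mathrm{Vect}^{\varphi}(\mathfrak{S})$, every $\varphi$-invariant section $s \in P[1/E(u)]^{\wedge}_p$ already lies in $P$. Injectivity of the corresponding map $P^{\varphi=1} \to (P[1/E(u)]^\wedge_p)^{\varphi=1}$ is immediate, since $E(u)$ is a regular element of the $p$-adically complete ring $\mathfrak{S}$, so the map $\mathfrak{S} \to \mathfrak{S}[1/E(u)]^\wedge_p$ is injective and remains so after tensoring with any finite projective $P$.

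For surjectivity, I would base change $P$ along the map $\mathfrak{S} \to A_{\inf}$ sending $u \mapsto [\underline{\pi}]$ to obtain an $F$-crystal $P_{\inf} = P \otimes_{\mathfrak{S}} A_{\inf}$ over $A_{\inf}$, together with a $\varphi$-equivariant injection $P[1/E(u)]^\wedge_p \hookrightarrow P_{\inf}[1/\tilde{\xi}]^\wedge_p$. By Example~\ref{PerfdLaurentCrys} (equivalently the étale comparison over $\mathrm{Spf}(\mathcal{O}_C)$), there is a canonical identification $P_{\inf}[1/\tilde{\xi}]^\wedge_p \simeq L \otimes_{\mathbf{Z}_p} W(C^\flat)$ for a finite free $\mathbf{Z}_p$-module $L$ with continuous $G_K$-action, and under this identification the $\varphi$-invariants of the target are exactly $L$. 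Consequently, any $\varphi$-invariant $s \in P[1/E(u)]^\wedge_p$ determines an element of
\[ L \cap P[1/E(u)]^\wedge_p \ \subset \ L \otimes_{\mathbf{Z}_p} W(C^\flat), \]
and the whole problem collapses to showing this intersection lies inside $P$.

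The crucial observation is that any such $s$ is automatically $G_{K_\infty}$-invariant, since $G_{K_\infty}$ fixes $[\underline{\pi}]$ and therefore pointwise fixes the image of $\mathfrak{S}[1/E(u)]^\wedge_p$ inside $W(C^\flat)$. So $s \in L^{G_{K_\infty}}$ regarded inside $L \otimes_{\mathbf{Z}_p} W(C^\flat)$, and I need to argue that this Galois constraint, combined with $s$ being in $P[1/E(u)]^\wedge_p$, forces $s$ to have no pole along $E(u) = 0$. This is the ``orthogonality'' alluded to in the introduction between the Kummer tower $K_\infty = K(\underline{\pi})$ and the cyclotomic tower $K(\mu_{p^\infty})$: inside $A_{\inf}$, the Kummer direction is encoded by $[\underline{\pi}]$ (and hence by the image of $\mathfrak{S}$), while the cyclotomic direction and the $G_K$-action are controlled by $\mu = [\epsilon]-1$. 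The strategy is first to bound the $\mu$-adic denominator of an element of $L \otimes W(C^\flat)$ in terms of its Hodge-Tate weights (the content I expect in Lemma~\ref{BoundPoles}), and then to show that an element of $A_{\inf}[1/\mu]$ with bounded $\mu$-pole which is additionally $G_{K_\infty}$-invariant and lies in $\mathfrak{S}[1/E(u)]^\wedge_p$ must actually extend across $u = 0$, i.e., be ``entire'' in the Kummer variable (the content I expect in Lemma~\ref{Ainfentire}).

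The main obstacle is precisely this last synthesis. The reduction steps above are essentially formal, resting on the étale comparison for the perfect prism $A_{\inf}$ and on the functoriality of base change along $\mathfrak{S} \to A_{\inf}$. What is genuinely delicate — and what I expect the cited lemmas to carry out — is the analytic comparison between the Kummer embedding $\mathfrak{S}[1/E(u)]^\wedge_p \hookrightarrow W(C^\flat)$ and the cyclotomic embedding $\mathbf{Z}_p[\![q-1]\!] \to A_{\inf}$, leveraging the independence of the elements $[\underline{\pi}]$ and $\mu$ under the two Galois groups $G_{K_\infty}$ and $G_{K(\mu_{p^\infty})}$ to pin down an explicit subring of $W(C^\flat)$ stable under both constraints.
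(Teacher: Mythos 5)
Your reduction to a statement about $\varphi$-invariant sections and the passage across $\mathfrak{S}\to A_{\inf}$ match the paper's opening moves, but there is a genuine gap at the crux. You reduce the problem to showing that an element of $L\cap P[1/E(u)]^{\wedge}_p$ (inside $L\otimes_{\mathbf{Z}_p}W(C^\flat)$) lies in $P$, and then defer to Lemmas~\ref{BoundPoles} and~\ref{Ainfentire} to finish. But those lemmas do not say what you expect them to say, and in particular they do not finish the argument. Lemma~\ref{Ainfentire} is a statement about $A_{\inf}[1/(q-1)]$ entirely in the ``cyclotomic'' direction (no $\mathfrak{S}$ or Kummer variable appears), and Lemma~\ref{BoundPoles}, which does combine the two embeddings, concludes only that an element of $\mathfrak{S}[1/u]^{\wedge}_p\cap A_{\inf}[1/(q-1)]$ is \emph{meromorphic} over $\mathfrak{S}$ with poles in a certain finite set $B'\subset\{x_m\}_{m\geq 1}$ — not that it is entire. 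This set $B'$ is genuinely nonempty in general: Example~\ref{PolesBadSetEx} exhibits $\frac{1}{u-p}\in\mathfrak{S}[1/u]^{\wedge}_p\cap A_{\inf}[1/(q-1)]$ with a pole at $x_1$ when $K=\mathbf{Q}_p(p^{1/p})$. So the set-theoretic intersection you have reduced to can strictly contain $P$, and the analytic lemmas alone cannot close the gap.

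The missing ingredient is a second, and essential, use of the Frobenius structure on $P$ itself. After writing the $\varphi$-invariant element as $\alpha=\beta/g$ with $\beta\in P$ and $g\in\mathfrak{S}$ having zeroes only in $B=\{x_m\}_{m\geq 1}$ (using Lemma~\ref{BoundPoles} plus the UFD property of $\mathfrak{S}$), one iterates the $\varphi$-invariance identity to get $\varphi_P^r(\beta\otimes 1)=\frac{\varphi^r(g)}{g}\beta$ in $P[1/I_r]$ for all $r\geq 1$. Since $I_r$ is invertible at each $x_m$ with $m\geq 1$, and since applying $\varphi$ moves a pole at $x_k$ to $x_{k-1}$, the factor $\varphi^r(g)$ becomes a unit at $x_m$ once $r\gg 0$, and the identity then forces $\alpha\in P^{\wedge}_{x_m}$ for every $m\geq 1$. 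That step is where the poles in $B'$ are actually eliminated, and it has no analogue in your proposal. The $G_{K_\infty}$-invariance you invoke, by contrast, is automatic (it just records that $\mathfrak{S}$ lands in the $G_{K_\infty}$-fixed part of $A_{\inf}$) and is not used in the paper's argument at all; it does not provide the needed constraint. Also, the bound putting the element into $A_{\inf}[1/(q-1)]$ comes from \cite[Lemma~4.26]{BMS1} applied to the BKF-module $P\otimes_{\mathfrak{S}}A_{\inf}$, not from Hodge--Tate weights; attributing it to Lemma~\ref{BoundPoles} conflates two different steps.
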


The faithfulness is clear. For full faithfulness, we first argue that morphisms on right are meromorphic over $\mathrm{Spa}(\mathfrak{S})$ and in fact entire away from a finite set of pre-determined points depending only on $K$; we then use the Frobenius structure to get the extension at the missing points. To get the meromorphy in the first step, we use the adic spaces attached to $\mathfrak{S}$ and $A_{\inf}$ as well as the relation between them; the key meromorphy criterion is recorded in Lemma~\ref{BoundPoles}.

Let us begin by recalling the following (standard) coarse classification of points of $\mathrm{Spa}(A_{\inf})$.

\begin{lemma}[Points of $\mathrm{Spa}(A_{\inf})$]
\label{AinfPoint}
Fix a point $y \in \mathrm{Spa}(A_{\inf})$ and write $V=k(y)^+$. Then we must have one of the following possibilities:
\begin{enumerate}
\item The non-analytic point $y_k$: The elements $p$ as well as $q^{1/p^n}-1$ for $n \geq 0$ map to $0$ in $V$. In this case, $(k(y),k(y)^+) = (k,k)$. 
\item The crystalline point $y_{\crys}$: The elements $q^{1/p^n} - 1$ map to $0$ in $V$ for all $n \geq 0$ while $p$ gives a pseudo-uniformizer of $V$. In this case, $(k(y),k(y)^+) = (W(k)[1/p], W(k))$.
\item The \'etale point $y_{et}$: The element $p$ maps to $0$ in $V$ while the elements $q^{1/p^n}-1$ give pseudouniformizers in $V$ for all $n \geq 0$. In this case, $(k(y),k(y)^+) = (C^\flat, \mathcal{O}_{C^\flat})$.
\item The remaining points: Both $p$ as well as $q^{1/p^n}-1$ for $n \gg 0$ give pseudouniformizers in $V$. In this case, we have $p/(q^{1/p^n}-1) \in V$ for $n \gg 0$. 
\end{enumerate}
In particular, we have
\begin{equation}
\label{SpaAinfPointeq}
 \mathrm{Spa}(A_{\inf})^{an} - \{y_{\crys}\} = \bigcup_n \mathrm{Spa}(A_{\inf})\left(\frac{p}{q^{1/p^n}-1}\right).
 \end{equation}
\end{lemma}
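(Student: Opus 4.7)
The plan is to classify a point $y \in \mathrm{Spa}(A_{\inf})$ by the continuous valuation $v_y$ it determines, via the support $\mathfrak p := \mathrm{supp}(v_y)$ and the position in $\mathfrak p$ of the two ``distinguished'' pseudo-uniformizers $p$ and $\mu_n := q^{1/p^n}-1$. Two bookkeeping observations cut the possibilities down to exactly the four in the lemma: first, since $A_{\inf}$ is $(p,\mu)$-adically complete, continuity of $v_y$ forces $v_y(p^m), v_y(\mu^m) \to \infty$ in $\Gamma \cup \{\infty\}$; second, the factorization
\[
\mu_n = \mu_{n+1}\cdot S_n, \qquad S_n := q^{(p-1)/p^{n+1}} + \cdots + q^{1/p^{n+1}} + 1, \qquad S_n \equiv p \pmod{\mu_{n+1}},
\]
shows that $n \mapsto v_y(\mu_n)$ is decreasing, so once some $\mu_N \notin \mathfrak p$ then all later $\mu_n \notin \mathfrak p$; moreover, if $p \in \mathfrak p$, passing mod $p$ to the valuation ring $\mathcal O_{C^\flat}$ (where $\mu_n$ becomes $(\epsilon-1)^{1/p^n}$) the prime property of $\mathfrak p$ gives $\mu_0 \in \mathfrak p \iff \mu_n \in \mathfrak p$ for all $n$.

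The residue data in cases (1)--(3) is then a routine identification. In case (1), $\mathfrak p$ contains $p$ together with the closure of $\bigcup_n (\mu_n)$; reducing mod $p$ this becomes the maximal ideal $\mathfrak m_{C^\flat}$ of $\mathcal O_{C^\flat}$, so $\mathfrak p$ is the unique maximal ideal of $A_{\inf}$ and $(k(y),k(y)^+)=(k,k)$. In case (2), the same reduction mod $p$ gives $\mathfrak p \pmod p = \mathfrak m_{C^\flat}$ while $p\notin\mathfrak p$; the closed ideal generated by $\{\mu_n\}$ already equals $\ker(A_{\inf}\to W(k))$, and $p$-torsionfreeness of $A_{\inf}/\mathfrak p$ then forces $\mathfrak p=\ker(A_{\inf}\to W(k))$, after which the uniqueness of the $p$-adic valuation on $W(k)[1/p]$ gives $(W(k)[1/p], W(k))$. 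In case (3), the support contains $(p)$ but omits every $\mu_n$; since $\mathcal O_{C^\flat}$ is a rank-one valuation ring whose only nonzero prime contains all $\mu_n$, the support equals $(p)$ and $v_y$ is the canonical valuation on $C^\flat$.

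Case (4) is the crux. Writing $\gamma := v_y(p) > 0$ and $\alpha_n := v_y(\mu_n) > 0$ in the value group $\Gamma$, the aim is to show $\alpha_n \leq \gamma$ for $n \gg 0$. Applying $v_y$ to the factorization above, and noting that $S_n - p \in (\mu_{n+1})$ gives $v_y(S_n - p) \geq \alpha_{n+1}$, the ultrametric inequality yields $v_y(S_n) = \gamma$ whenever $\alpha_{n+1} > \gamma$, hence $\alpha_n = \alpha_{n+1} + \gamma$. If $\alpha_n > \gamma$ held for every $n$, iterating this identity downward would force $\alpha_0 > (n+1)\gamma$ for every $n$; but continuity of $v_y$ makes $\gamma$ cofinal in $\Gamma$ (as $v_y(p^m) \to \infty$), contradicting this. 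Thus $\alpha_n \leq \gamma$ for all sufficiently large $n$, i.e.\ $p/\mu_n \in V$.

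The covering \eqref{SpaAinfPointeq} now follows at once: the rational open $\mathrm{Spa}(A_{\inf})(p/\mu_n) = \{y : |p(y)| \leq |\mu_n(y)| \neq 0\}$ contains the \'etale point for every $n$ (since $|p|=0 < |\mu_n|$) and every case (4) point for $n \gg 0$ by the previous paragraph, while cases (1) and (2) have $|\mu_n|=0$ for all $n$ and are thus excluded (case (1) is non-analytic, case (2) is $y_{\crys}$). The main technical obstacle is case (4), and the crucial ingredient is the cofinality of $v_y(p)$ in $\Gamma$ --- precisely what continuity of the valuation provides, and what rules out the higher-rank valuations that would otherwise make the inequality $\alpha_n \leq \gamma$ fail forever.
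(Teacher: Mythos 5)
Your proposal is correct and follows the same overall skeleton as the paper's argument: use continuity to force topological nilpotence of $p$ and the $q^{1/p^n}-1$ in $V$, use analyticity to get that at least one of them is nonzero, sort into the four cases, and then derive a contradiction in case (4) from the assumption that $p/(q^{1/p^n}-1) \notin V$ for all $n$. The difference is in how the case-(4) contradiction is organized. The paper's proof works directly with $p$-adic separatedness of $V$: it picks $N$ with $q^{1/p^N}-1 \neq 0$, picks $k$ with $q^{1/p^N}-1 \notin p^{k+1}V$, and then uses the binomial expansion of $(q^{1/p^{N+k}})^{p^k}$ to show $q^{1/p^N}-1 \in p^{k+1}V$, a contradiction. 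You instead exhibit the cyclotomic factorization $\mu_n = \mu_{n+1} S_n$ with $S_n \equiv p \pmod{\mu_{n+1}}$, observe that $\alpha_{n+1} > \gamma$ forces $v_y(S_n)=\gamma$ by the ultrametric inequality, and iterate the resulting recursion $\alpha_n = \alpha_{n+1}+\gamma$ to contradict cofinality of $\{m\gamma\}$. These are morally equivalent — the paper's binomial bound is essentially the $k$-fold iterate of your one-step factorization, and the "cofinality of $m\gamma$" you invoke is exactly the $p$-adic separatedness the paper invokes — but your reorganization is arguably more transparent about where the growth $\alpha_0 \geq m\gamma$ comes from. You also supply the residue-field identifications in cases (1)–(3), which the paper states without proof.

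One small loose end in the case-(4) step: you iterate down to $\alpha_0$ and claim $\alpha_0 > (n+1)\gamma$ contradicts cofinality. But case (4) only guarantees $\mu_n$ is a pseudouniformizer (in particular nonzero) for $n \gg 0$; it is perfectly possible that $\mu_0 = \cdots = \mu_{N-1} = 0$, i.e.\ $\alpha_0 = \cdots = \alpha_{N-1} = \infty$, in which case "$\alpha_0 > m\gamma$ for all $m$" is vacuous and gives no contradiction. The fix is immediate: take any $N$ with $\alpha_N < \infty$ (which exists since $\mu_N$ is a pseudouniformizer for $N \gg 0$) and run the same recursion starting from $\alpha_N = \alpha_{N+m} + m\gamma > m\gamma$. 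The paper is careful about exactly this point ("Choose some $N \gg 0$ such that $q^{1/p^N}-1 \neq 0$ in $V$"), and you should be too.
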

\begin{proof}
The last assertion in the lemma follows by observing that the points appearing in (2), (3) or (4) lie in the right side of \eqref{SpaAinfPointeq}. For the rest, it is enough to show that any analytic point $y \in \mathrm{Spa}(A_{\inf})^{an}$ falls into one of the three possibilites described in (2), (3) and (4). Fix one such $y$ for the rest of the proof.

By continuity, each element of the set $S = \{p, \{q^{1/p^n}-1\}_{n \geq 0}\}$ is topologically nilpotent in $V$. Moreover, not all of these elements can be $0$ as the kernel of the valuation of defined by $y$ is not open since $y$ is analytic. Thus, at least one element of $S$ must be a pseudouniformizer in $V$. It is then easy to see that if we are not in case (2) or (3), then both $p$ as well as $q^{1/p^n}-1$ for $n \gg 0$ give pseudouniformizers in $V$: for $m \geq n$, we have $(q^{1/p^m}-1) \mid (q^{1/p^n}-1)$, so if the latter is nonzero the same holds true for the former. In this case, it remains to check the last property in (4), i.e., that $p/(q^{1/p^n}-1) \in V$ for $n \gg 0$. Assume this is false.  Then we must have $q^{1/p^n}-1 \in pV$ for all $n \geq 0$ as $V$ is a valuation ring. Choose some $N \gg 0$ such that $q^{1/p^N}-1 \neq 0$ in $V$. Now the ring $V$ is $p$-adically separated as $p$ is a pseudouniformizer in $V$, so we can choose some $k \geq 0$ such that $q^{1/p^N}-1 \notin p^{k+1} V$. But we know that $q^{1/p^{N+k}} \equiv 1 \mod pV$; raising to the $p^k$-th power then shows that $q^{1/p^N} \equiv 1 \mod p^{k+1}V$, which is a contradiction.
\end{proof}

\begin{notation}[The adic spaces $\mathcal{X}$ and $\mathcal{Y}$]
\label{AdicXY}
Consider the adic space $\mathcal{Y} = \mathrm{Spa}(A_{\inf})^{an} - \{y_{\crys}\}$, where $y_{\crys}$ is as in Lemma~\ref{AinfPoint}. Write $y_0 = y_C \in \mathcal{Y}$ for the Hodge-Tate point, and let $y_n = \varphi^n(y_0)$ for all $n \in \mathbf{Z}$. 

Let $x_{\crys} \in \mathrm{Spa}(\mathfrak{S})^{an}$ be point defined by $u=0$; this is also the image of $y_{\crys}$ under the natural map $\mathrm{Spa}(A_{\inf}) \to \mathrm{Spa}(\mathfrak{S})$, and moreover the inverse image of $\{x_{\crys}\} \subset \mathrm{Spa}(\mathfrak{S})$ is exactly $\{y_{\crys}\} \subset \mathrm{Spa}(A_{\inf})$. Let $\mathcal{X} = \mathrm{Spa}(\mathfrak{S})^{an} - \{x_{\crys}\}$, so we have an induced map $\pi:\mathcal{Y} \to \mathcal{X}$ of adic spaces. Write $x_{et} = \pi(y_{et}) \in \mathcal{X}$, so $x_{et}$ is defined $\mathfrak{S} \to \mathfrak{S}/p[1/u]$. Finally, write $x_n = \pi(y_n)$ for $n \in \mathbf{Z}$; explicitly, these can be described as follows:
\begin{itemize}
\item If $n \geq 0$, then $x_n$ is defined by $\mathfrak{S} \xrightarrow{\varphi^n} \mathfrak{S} \to \mathfrak{S}/E(u) \simeq \mathcal{O}_K \subset K$
\item If $n < 0$, then $x_n$ is defined by $\mathfrak{S} \to \mathfrak{S}/\varphi^n E(u)[1/p] =: K_n$.
\end{itemize}
Finally, write $B = \{x_n\}_{n \geq 1} \subset \mathcal{X}$ for the displayed set of {\em positive} $\varphi$-translates of the Hodge-Tate point.
\end{notation}

\begin{lemma}[Some fibres of $\mathcal{Y} \to  \mathcal{X}$]
\label{SingletonFib}
Consider the subset $B := \{x_m\}_{m \geq 1} \subset \mathcal{X}$ from Notation~\ref{AdicXY}. Then the subset $B' \subset B$ consisting all $x_m \in B$ with $\# \pi^{-1}(x_m) = 1$ is finite. 
\end{lemma}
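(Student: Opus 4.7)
My plan is to show $B' = \emptyset$ by exhibiting a second point of $\mathcal{Y}$ lying above $x_m$ for every $m \geq 1$. The obvious point is $y_m = \varphi^m(y_0)$; the natural source of additional preimages is the cyclotomic tower.

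For concreteness, I illustrate the construction in the simplest case $\pi = p$; the general case is analogous via the minimal polynomial $E_m$ of $\pi^{p^m}$ (with some bookkeeping caused by $\varphi$ acting nontrivially on the coefficients of $E$). With $\pi = p$, the fiber $\pi^{-1}(x_m)$ is cut out in $\mathcal{Y}$ by the element $[\underline{p}] - p^{p^m} \in A_{\inf}$. Using the Kummer element $[\underline{p}^{1/p^m}] \in A_{\inf}$ (satisfying $[\underline{p}^{1/p^m}]^{p^m} = [\underline{p}]$) together with the classical identity
\[
X^{p^m} - p^{p^m} = (X-p) \cdot \prod_{k=1}^{m} F_k(X,p), \qquad F_k(X,p) := p^{\phi(p^k)} \cdot \Phi_{p^k}(X/p)
\]
(where $\Phi_{p^k}$ is the $p^k$-th cyclotomic polynomial, so each $F_k$ is a polynomial with integer coefficients), one obtains the factorization
\[
[\underline{p}] - p^{p^m} = ([\underline{p}^{1/p^m}] - p) \cdot \prod_{k=1}^{m} F_k([\underline{p}^{1/p^m}], p)
\]
in $A_{\inf}$. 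The first factor generates $\ker(y_m) = \varphi^{-m}(\ker \tilde\theta)$, which is the prime defining $y_m$. For $m \geq 1$ there is at least one further factor $F_1([\underline{p}^{1/p^m}], p)$; I would argue that it cuts out a height-$1$ prime of $A_{\inf}[1/p]$ that is distinct from $([\underline{p}^{1/p^m}] - p)$ and lies in the analytic locus $\mathcal{Y}$. Geometrically, the new prime represents the locus where $[\underline{p}^{1/p^m}]/p$ is a primitive $p$-th root of unity, which lives in the composite tower $K_\infty \cdot K(\mu_p) \subset C$.

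The main obstacle, which I expect to be the heart of the argument, is verifying that each cyclotomic factor $F_k([\underline{p}^{1/p^m}], p)$ for $k \geq 1$ contains a height-$1$ prime of $A_{\inf}[1/p]$ that is (i) distinct from the Breuil-Kisin prime $([\underline{p}^{1/p^m}] - p)$ and (ii) supported at an analytic point of $\mathcal{Y}$ rather than only at the excluded non-analytic point $y_{\crys}$. For (ii) one can reduce modulo $p$ and observe that $F_k$ specialises to a nonzero power of $\underline{p}^{1/p^m}$ in $\mathcal{O}_C^\flat$, so its vanishing locus meets $\mathcal{Y}$. Point (i) is precisely where the \emph{orthogonality} between the Kummer tower $K_\infty = K(\pi^{1/p^\infty})$ (encoded by $[\underline{\pi}^{1/p^m}]$) and the cyclotomic tower $K(\mu_{p^\infty})$ (encoded by the cyclotomic polynomials $\Phi_{p^k}$) enters the picture, and should follow from the same sort of computations with elements of $A_{\inf}$ that underlie Lemmas~\ref{Ainfentire} and \ref{BoundPoles}. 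Granting these two points, each such $F_k$ contributes a distinct analytic preimage of $x_m$, giving $\#\pi^{-1}(x_m) \geq m+1 \geq 2$ for every $m \geq 1$, so $B' = \emptyset$.
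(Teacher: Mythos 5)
Your plan is to prove $B' = \emptyset$, which is strictly stronger than the lemma and is \emph{false} for general $K$. The paper points this out immediately after the statement: $B' = \emptyset$ precisely when the absolute ramification index $e(K/K_0)$ is $< p$, while $B' \neq \emptyset$ for $K = K_0(p^{1/p})$. Concretely, taking the uniformizer $p^{1/p}$ so that $E(u) = u^p - p$, Example~\ref{PolesBadSetEx} produces $f = \frac{1}{u-p} \in \mathfrak{S}\langle p/u^2 \rangle[1/u] \cap A_{\inf}[\frac{1}{q-1}]$ with a pole at $x_1$; since Lemma~\ref{BoundPoles} confines the poles of such elements to $B'$, this forces $x_1 \in B'$, i.e.\ $\# \pi^{-1}(x_1) = 1$. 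So the passage from $\pi = p$ to ``the general case is analogous'' in your opening paragraph cannot hold: once $K$ is sufficiently ramified, the Kummer element and the cyclotomic factors are no longer transverse and there is no second preimage to exhibit. (In the example, $u - p$ maps in $A_{\inf}$ to a unit multiple of $\varphi^{-1}([p]_q)$, so the would-be cyclotomic point already coincides with the Breuil--Kisin point $y_1$.) This is precisely the phenomenon the lemma needs to control, and an argument whose only output is a \emph{construction} of extra preimages has no way to see it. The two sub-claims (i), (ii) that you flag as the ``heart of the argument'' are also left open even in the case $\pi = p$, but the fatal issue is the invalid reduction to that case.

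The paper's proof is of a different character and never passes to $A_{\inf}$. Using the factorization $\pi = \varphi^n \circ (\varphi^{-n}\pi)$ of surjections, $\#\pi^{-1}(x_n)=1$ forces $(\varphi^n)^{-1}(x_n)=\{x_0\}$, which translates to the equality of ideals $\varphi^n(I_n)\mathfrak{S} = (E(u))$ in $\mathfrak{S}[1/p]$, where $I_n := \ker(\mathfrak{S}\xrightarrow{\varphi^n}\mathfrak{S}\to\mathcal{O}_K)$. Base-changing the finite flat degree-$p^n$ endomorphism $\varphi^n$ of $\mathfrak{S}$ along $\mathfrak{S}\to\mathcal{O}_K$ then exhibits $K$ as a totally ramified degree-$p^n$ extension of the discretely valued field $L := \mathfrak{S}[1/p]/I_n \supseteq K_0$, so $p^n \leq e(K/K_0)$; this bounds $n$ and gives finiteness of $B'$. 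So the necessary idea is a ramification bound over $\mathfrak{S}$ itself, not an $A_{\inf}$-factorization; the latter cannot succeed here because the conclusion it aims for ($B' = \emptyset$) is simply not true.
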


The proof below shows that $B' = \emptyset$ if $K$ is unramified or more generally if the absolute ramification index $e(K/K_0)$ of $K$ is $< p$. On the other hand, if $K=K_0(p^{1/p})$, then $B' \neq \emptyset$ (see Example~\ref{PolesBadSetEx}).

\begin{proof}
For each $n \geq 1$, one has a factorization
\[ \mathcal{Y} \xrightarrow{\varphi^{-n} \circ \pi} \mathcal{X} \xrightarrow{\varphi^n} \mathcal{X} \]
of $\pi$ with both maps being surjective. It is thus enough to show that $(\varphi^n)^{-1}(x_n) \subset \mathcal{X}$ is a singleton for finitely many integers $n \geq 1$. As $x_n = \varphi^n(x_0)$, it is equivalent to show that there exist only finitely many integers $n \geq 1$ such that $(\varphi^n)^{-1}(x_n) = \{x_0\}$. The point $x_n$ is exactly the vanishing locus of the ideal $I_n := \ker(\mathfrak{S} \xrightarrow{\varphi^n} \mathfrak{S} \to \mathfrak{S}/E(u) = \mathcal{O}_K)$, so $(\varphi^n)^{-1}(x_n)$ is the vanishing locus of $\varphi^n(I_n) \mathfrak{S}$. Thus, our task is to show that there exist only finitely many $n \geq 1$ such that $\varphi^n(I_n) = (E(u))$ as ideals of $\mathfrak{S}[1/p]$. But for any such $n$, the map $L := \mathfrak{S}[1/p]/I_n \xrightarrow{\varphi^n} \mathfrak{S}[1/p]/E(u) = K$ is a finite flat degree $p^n$ extension of rings (by base change from the same property for $\varphi^n:\mathfrak{S} \to \mathfrak{S}$) that lives over the map $\varphi^n:K_0 \to K_0$; this forces $K/L$ to be a totally ramified degree $p^n$ extension of discretely valued fields, which is clearly only possible for finitely many values of $n$, proving the lemma.
\end{proof}

\begin{lemma}[Analytic criteria for membership in {$A_{\inf} \subset A_{\inf}[1/(q-1)]$}]
\label{Ainfentire}
Let $h \in A_{\inf}[1/(q-1)]$. Assume $h$ defines an entire function in a neighbourhood of each $y_n \in \mathcal{Y}$ for $n \geq 1$. Then $h \in A_{\inf}$.
\end{lemma}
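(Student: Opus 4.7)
Plan: The plan is to use that $A_{\inf}$ equals the intersection of its localizations at height-one primes, so it suffices to check that $h$ has no pole at each height-one prime of $A_{\inf}$ containing $\mu = q - 1$; the Frobenius automorphism of $A_{\inf}$ will play a crucial role in transporting the hypothesis from the points $y_n$ for $n \geq 1$ to the Hodge-Tate prime $(\tilde{\xi})$ corresponding to $y_0$, which is the only classical prime not directly covered by the hypothesis.

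First I would identify the classical height-one primes of $A_{\inf}$ containing $\mu$ as exactly the ideals $(\tilde{\xi}_n) = (\varphi^{-n}(\tilde{\xi}))$ for $n \geq 0$, corresponding to the points $y_n \in \mathcal{Y}$. Using the factorization $\mu = \mu_n \cdot \tilde{\xi}_1 \cdots \tilde{\xi}_n$ (with $\mu_n = q^{1/p^n} - 1$), one computes that $v_{(\tilde{\xi}_n)}(\mu) = 2$ for $n \geq 1$ while $v_{(\tilde{\xi})}(\mu) = 1$. Writing $h = g/\mu^k$ with $g \in A_{\inf}$ and $k \geq 0$ minimal, the hypothesis translates (via the Frobenius-preservation of valuation $v_{(\tilde{\xi}_n)}(\cdot) = v_{(\tilde{\xi})}(\varphi^n(\cdot))$) into $v_{(\tilde{\xi})}(\varphi^n(g)) \geq 2k$ for all $n \geq 1$; the target bound is $v_{(\tilde{\xi})}(g) \geq k$, which is precisely integrality of $h$ at $y_0$.

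To extract this bound, I would use the Teichm\"uller expansion $g = \sum_{i \geq 0} [a_i] p^i$ with $a_i \in \mathcal{O}_C^\flat$, under which $\varphi^n(g) = \sum_{i \geq 0} [a_i^{p^n}] p^i$; applying $\tilde{\theta}$ gives $\tilde{\theta}(\varphi^n(g)) = \sum_{i \geq 0} (a_i^\sharp)^{p^n} p^i \in \mathcal{O}_C$. The hypothesis then provides a uniform system of vanishing constraints in $\mathcal{O}_C$ (and in fact modulo higher powers of $\tilde{\xi}$ via working in $B_{\dR}^+$), which, combined with the $(p, \mu)$-adic completeness of $A_{\inf}$ and the $\delta$-ring structure relating $\varphi$ to the $p$-power map, is forced to yield the required divisibility of $g$ by $\tilde{\xi}^k$. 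Separately, non-classical height-one primes of $A_{\inf}$ containing $\mu$ are ruled out using the rational cover of $\mathcal{Y}$ from Lemma~\ref{AinfPoint}, which shows that every analytic point of $V(\mu)$ in $\mathcal{Y}$ lies in a neighborhood of some $y_n$, so the hypothesis suffices.

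The main obstacle is the precise analytic step converting the system $\{v_{(\tilde{\xi})}(\varphi^n(g)) \geq 2k\}_{n \geq 1}$ into $v_{(\tilde{\xi})}(g) \geq k$: the naive $p^n$-th power scaling via the congruence $\varphi^n(g) \equiv g^{p^n} \pmod{p}$ yields only the insufficient bound $v_{(\tilde{\xi})}(g) \geq 2k/p^n$. Closing the gap requires exploiting the \emph{uniformity} in $n$ of the hypothesis rather than any single $n$, for instance by arguing inductively on the Teichm\"uller coefficients $a_i$ and showing that the Frobenius constraints propagate through the Witt vector expansion in a way that recovers the full order of vanishing at the Hodge-Tate point, rather than only the fractional bound coming from any fixed Frobenius iterate.
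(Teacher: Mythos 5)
Your proposal has a genuine gap, and you acknowledge it in the final paragraph: the crucial step passing from the family of constraints at the points $y_n$ ($n\geq 1$) to divisibility of $g$ by a power of $q-1$ is never carried out, only gestured at with ``exploiting uniformity'' and ``Teichm\"uller coefficients.'' There are also errors in the setup. The valuation computation is off: $\mu=q-1$ vanishes to order $1$, not $2$, at each $(\tilde\xi_n)=(\varphi^{-n}\tilde\xi)$ for $n\geq 1$. In your factorization $\mu=\mu_n\tilde\xi_1\cdots\tilde\xi_n$, the factor $\mu_n=\varphi^{-n}(\mu)$ does \emph{not} vanish at $(\tilde\xi_n)$: applying $\varphi^n$ reduces this to asking whether $\mu$ vanishes at $(\tilde\xi)$, but $\tilde\theta(\mu)=\epsilon_p-1\neq 0$; likewise $\tilde\xi_1,\dots,\tilde\xi_{n-1}$ miss $(\tilde\xi_n)$. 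More seriously, the plan is organized around ``transporting'' information to the Hodge-Tate prime $(\tilde\xi)$, yet $(\tilde\xi)$ does not contain $\mu$, so there is nothing to check there and your stated ``target bound $v_{(\tilde\xi)}(g)\geq k$'' is not what integrality of $h$ requires; the hypothesis already covers exactly the primes where $\mu$ vanishes. You also invoke, without justification, that $A_{\inf}$ is the intersection of its localizations at height-one primes --- a nontrivial claim for a huge non-Noetherian ring.

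The paper's own argument sidesteps all of this. Writing $h=g/(q-1)^m$ with $g\in A_{\inf}$, the hypothesis forces $g$ to vanish at every $y_n$ with $n\geq 1$ (since $(q-1)^m$ vanishes there and $h$ is entire). The key input is then a single quoted structural fact: the evaluation map $A_{\inf}\to\prod_{n\geq 1}k(y_n)$ is identified with the composite $A_{\inf}\simeq\lim_F W(\mathcal{O}_C)\to W(\mathcal{O}_C)\to\prod_{n\geq 0}\mathcal{O}_C$, where the first arrow has kernel exactly $(q-1)$ by [BMS1, Lemma~3.23] and the remaining maps are injective. Hence $(q-1)\mid g$, and one replaces $g$ by $g/(q-1)$, reduces $m$ by one, and iterates. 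This gains one factor of $q-1$ at a time from a clean kernel computation, with no valuation arithmetic, no classification of height-one primes, and no Teichm\"uller manipulations.
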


In particular, this lemma implies that $1/u \notin A_{\inf}[1/(q-1)]$, whence $(q-1)^n \notin uA_{\inf}$ for any $n \geq 0$, and thus that $(q-1) \notin \sqrt{uA_{\inf}}$; as a contrast, note that the $(p,I)$-adic completion of $\sqrt{uA_{\inf}} \subset A_{\inf}$ coincides with $\ker(A_{\inf} \to W(k))$, which certainly contains $q-1$.

\begin{proof}
Write $h = \frac{g}{(q-1)^m}$ for $g \in A_{\inf}$. We must show that $(q-1)^m \mid g$. If $m=0$, there is nothing to show, so we may assume $m \geq 1$. For each $n \geq 1$, the image of $g = (q-1)^m h \in A_{\inf}$ under the natural map $A_{\inf} \to \mathcal{O}_{\mathcal{Y},y_n}$ lies in the maximal ideal of the target dvr: indeed, $h$ is entire at $y_n$, so the claim follows as $(q-1)^m$ vanishes at $y_n$. Thus, $g$ maps to $0$ under the map
\[ A_{\inf} \to \prod_{n \geq 1} k(y_n).\]
But this map is identified with the natural map
\[ A_{\inf} \simeq \lim_F W(\mathcal{O}_C) \xrightarrow{pr} W(\mathcal{O}_C) \xrightarrow{a \mapsto (\varphi^n(a))_{n \geq 0}} \prod_{n \geq 0} \mathcal{O}_C \subset \prod_{n \geq 0} C\]
defined by Witt vector functoriality. The composite $A_{\inf} \to W(\mathcal{O}_C)$ appearing above has kernel exactly $(q-1)$ by \cite[Lemma 3.23]{BMS1}, while the rest of the maps are injective, so it follows that $(q-1) \mid g$. We may then replace the expression $h = \frac{g}{(q-1)^m}$ with $h = \frac{ g/(q-1) }{ (q-1)^{m-1}}$ and continue inductively to prove the lemma.
\end{proof}

\begin{lemma}[Detecting meromorphy over $\mathfrak{S}$]
\label{BoundPoles}
Fix $f \in \mathfrak{S}[\frac{1}{u}]^{\wedge} \cap A_{\inf}[\frac{1}{q-1}]$ (where the intersection is as subrings of $W(C^\flat)$). Then $f$ is meromorphic over $\mathfrak{S}$, with poles contained in the set $B'$ from Lemma~\ref{SingletonFib}.
\end{lemma}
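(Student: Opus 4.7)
The plan is to write $f = g/(q-1)^m$ with $g \in A_{\inf}$ and $m \geq 0$ minimal, so that the potential poles of $f$ on $\mathcal{Y}$ lie along the zero set of $q-1$, namely $\{y_n\}_{n \geq 1}$ (see the discussion preceding Lemma~\ref{Ainfentire}). The goal is then to produce an element $h \in \mathfrak{S}$ whose zero locus in $\mathcal{X}$ is contained in $B'$ and such that $fh \in \mathfrak{S}$.

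The crucial step is to show that for every $n \geq 1$ with $x_n \notin B'$, the function $f$ is already holomorphic at $y_n$. Granting this, one chooses $h \in \mathfrak{S}$ vanishing to sufficiently high order at each point of $B'$ (a finite set by Lemma~\ref{SingletonFib}) and arranges that $fh$ is holomorphic at every $y_n$ for $n \geq 1$; applying Lemma~\ref{Ainfentire} then yields $fh \in A_{\inf}$, and combining with $fh \in \mathfrak{S}[1/u]^{\wedge}$ and the identification $A_{\inf} \cap \mathfrak{S}[1/u]^{\wedge} = \mathfrak{S}$ inside $W(C^\flat)$ (a Laurent series in $u$ over $W(k)$ with nonzero negative-degree terms cannot land in $W(\mathcal{O}_{C^\flat})$) concludes $fh \in \mathfrak{S}$, giving the desired meromorphic representation $f = (fh)/h$.

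To prove the crucial step, fix $n$ with $x_n \notin B'$ and pick a second preimage $y' \in \pi^{-1}(x_n) \setminus \{y_n\}$; since $y'$ differs from every $y_m$ ($m \geq 1$), $q-1$ is a unit in $\mathcal{O}_{\mathcal{Y}, y'}$ and $f = g/(q-1)^m$ is tautologically holomorphic at $y'$, with a definite value in $k(y')$. The rigidity input is that $f \in \mathfrak{S}[1/u]^{\wedge}$, a ring generated topologically as a $W(k)$-algebra by $u$ and $u^{-1}$. For any $y \in \pi^{-1}(x_n)$, the element $u = [\underline{\pi}]$ takes the prescribed value $\pi^{p^n} \in K = k(x_n) \subset k(y)$. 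Consequently the image of $f$ in $k(y)$ depends only on the base point $x_n$, not on the chosen lift $y$. Since this common value (computed at $y'$) is a finite element of $K$, the putative evaluation of $f$ at $y_n$ is likewise finite, forcing $(q-1)^m$ to divide $g$ in $\mathcal{O}_{\mathcal{Y}, y_n}$ and hence $f \in \mathcal{O}_{\mathcal{Y}, y_n}$.

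The hard part is the rigidity argument above: one must formalize the evaluation of $f \in \mathfrak{S}[1/u]^{\wedge}$ at $y_n$ (where $q-1$ vanishes) and argue that the finite value forced by the preimage $y'$ pushes $f$ into the local ring $\mathcal{O}_{\mathcal{Y}, y_n}$. This requires a careful comparison of the rings $\mathfrak{S}[1/u]^{\wedge}$, $A_{\inf}[1/(q-1)]$, and $\mathcal{O}_{\mathcal{Y}, y_n}$ inside a common ambient ring such as $W(C^\flat)$, together with an understanding of the fibers of $\pi \colon \mathcal{Y} \to \mathcal{X}$ at the points of $B$. Once this is set up, the subsequent pole-clearing and the descent of $fh$ from $A_{\inf} \cap \mathfrak{S}[1/u]^{\wedge}$ to $\mathfrak{S}$ are routine.
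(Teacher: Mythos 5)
Your overall architecture matches the paper: isolate the bad set $B'$, clear poles by a $\mathfrak{S}$-element $h$ vanishing there, apply Lemma~\ref{Ainfentire} to get $fh \in A_{\inf}$, and intersect with $\mathfrak{S}[1/u]^{\wedge}$ to land in $\mathfrak{S}$. The final step $A_{\inf} \cap \mathfrak{S}[1/u]^{\wedge} = \mathfrak{S}$ is exactly the paper's (reduce mod $p$, valuative check), and the idea of using a second preimage $y' \in \pi^{-1}(x_n)\setminus\{y_n\}$ for $x_n \notin B'$ is also the paper's idea. So the skeleton is right.

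However, your ``crucial step'' has a genuine gap, and it is not the sort that careful bookkeeping fixes. You want to transport holomorphy of $f$ at $y'$ to holomorphy at $y_n$ by arguing that $f \in \mathfrak{S}[1/u]^{\wedge}$ must take the same value at every point of the fibre $\pi^{-1}(x_n)$ because $u$ does. But the ring map $\mathfrak{S}[1/u]^{\wedge}_p \to k(y_n)$ you are implicitly invoking does not exist: the $p$-adic completion $\mathfrak{S}[1/u]^{\wedge}_p = W(k)((u))^{\wedge}_p$ contains series such as $\sum_{m\geq 0} p^m u^{-m^2}$, whose image under $u \mapsto \pi^{p^n}$ would be $\sum_m p^m \pi^{-m^2 p^n}$; since $|\pi|^{p^n} \to 0$ while $|p|$ is fixed, the terms blow up for $n \gg 0$, so the series diverges in $C = k(y_n)$. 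Hence ``the image of $f$ in $k(y)$'' is not a well-defined quantity for $y = y_n$, and the rigidity argument is circular: defining that image \emph{is} the holomorphy you are trying to prove. The same issue prevents you from passing to a Cauchy sequence in $\mathfrak{S}[1/u][1/p]$ and taking limits, since convergence at $y_n$ is precisely what fails.

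The paper closes this gap by going through $\mathcal{X}$ rather than $\mathcal{Y}$, and this requires a genuinely new ingredient you are missing: Kedlaya's descent criterion for analyticity (the references to \cite{KedlayaSlope} in the proof). Concretely, one first multiplies by a crude $g \in \mathfrak{S}$ so that $gf \in \mathfrak{S}[1/u]^{\wedge}_p$ is entire on $\pi^{-1}(U)$ for an affinoid $U \ni x_{et}$, and then invokes Kedlaya's result to descend analyticity of $gf$ along $\pi^{-1}(U) \to U$. This establishes that $f$ is \emph{meromorphic on $\mathcal{X}$} with poles in $B$. Only after this is known does the fibre argument work: $\mathcal{O}_{\mathcal{X},x_n} \to \mathcal{O}_{\mathcal{Y},y'}$ is a local map of DVRs, $f$ is meromorphic in the source, and entire in the target, hence entire in the source; holomorphy then pulls back from $x_n$ to $y_n$. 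In short, the descent to $\mathcal{X}$ cannot be skipped, and the fibre argument must be run through $x_n$ rather than directly from $y'$ to $y_n$.
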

\begin{proof}
Fix some $c \geq 0$ such that $(q-1)^c \cdot f \in A_{\inf}$. 

First, we prove that $f$ is meromorphic on $\mathcal{X}$ with poles contained in $B$. For this, we may work with each affinoid open neighbourhood $U$ of $x_{et} \in \mathcal{X}$ separately. For any such $U$, the intersection $B \cap U$ is finite, so our task is to show that $f$ becomes entire on $U$ after multiplication by an element of $\mathfrak{S}$ with zeroes only at $B \cap U$. But $\pi^{-1}(U \cap B) \cap V(q-1) \subset \mathcal{Y}$ is a finite subset of $\{y_n\}_{n \geq 1}$. For any $y \in \pi^{-1}(U \cap B)$, the extension $\mathcal{O}_{\mathcal{X},\pi(y)} \to \mathcal{O}_{\mathcal{Y},y}$ is an extension of discrete valuation rings with finite ramification index. We may thus choose $g \in \mathfrak{S}$ whose valuation in $\mathcal{O}_{Y,y}$ exceeds that of $(q-1)^c$ for each $y \in \pi^{-1}(U \cap B)$. By our choice of $c$, it follows that $gf \in \mathfrak{S}[1/u]^{\wedge}_p$ defines an entire function on $\pi^{-1}(U) \subset \mathcal{Y}$. By \cite[Definition 2.2.13, Lemma 2.3.5, Remark 2.3.8]{KedlayaSlope}, it follows that $gf$ is analytic on $U$. As this holds true for all $U$, we conclude that $f$ is meromorphic on $\mathcal{X}$ with poles in $B$.

Next, we observe that $f$ only has poles in $B' \subset B$. Indeed, for any $x_n \in B$, the preimage $\pi^{-1}(x_n)$ intersects $V(q-1)$ in the singelton $\{\varphi^n(y_C)\}$. If $x_n \in B-B'$, the $\pi^{-1}(x_n)$ must have at least one point $y'  \in \mathcal{Y} - V(q-1)$ by definition of $B'$. But $f$ is entire on $\mathcal{Y} - V(q-1)$, so $f$ is entire at $y'$, and thus also at $x_n = \pi(y')$ (e.g., by comparing the map on local rings). 

Combining the previous two paragraphs, we can find some $g \in \mathfrak{S}$ with zeroes only at points of the finite set $B'$ such that $gf$ is entire on $\mathcal{X}$. It remains to check that $gf$ is entire at $x_{\crys}$ as well, i.e., lies in $\mathfrak{S}$. For this, we first observe that $gf \in A_{\inf}$ by Lemma~\ref{Ainfentire}. It then suffices to show that $\mathfrak{S} = \mathfrak{S}[1/u]^{\wedge}_p \cap A_{\inf}$ as subrings of $W(C^\flat)$, i.e., that the map
\[ \mathfrak{S}  \to \mathfrak{S}[1/u]^{\wedge}_p \times^h_{W(C^\flat)} A_{\inf} \]
induces an isomorphism on $H^0$. As $W(C^\flat)/\mathfrak{S}[1/u]^{\wedge}_p$ is $p$-torsionfree, one can check the above statement after reducing modulo $p$, where it reduces to showing the bijectivity of 
\[ k\llbracket u \rrbracket \to k((u)) \cap \mathcal{O}_{C^\flat},\]
which is clear for valuative reasons.
\end{proof}

The following example shows  that the functions $f$ appearing in Lemma~\ref{BoundPoles} may indeed have some poles.

\begin{example}
\label{PolesBadSetEx}
Let $K=\mathbf{Q}_p(p^{1/p})$ and $\pi = p^{1/p}$, so $E(u) = u^p-p$. By construction, the embedding $\mathfrak{S} \to A_{\inf}$ carries $E(u)$ to a generator of $\ker(\tilde{\theta})$, i.e., to a unit multiple of $[p]_q$. Twisting by Frobenius, this map sends $u-p$ to a unit multiple of $[p]_{q^{1/p}}$. In particular, we have $(u-p) \mid (q-1) \in A_{\inf}$, whence $f := \frac{1}{u-p} \in A_{\inf}[\frac{1}{q-1}]$. We also have $f \in \mathfrak{S}\langle p/u^2 \rangle[1/u]$ as we can write
\[ \frac{1}{u-p} = \frac{1}{u} \cdot \frac{1}{1-(p/u^2)u} = \frac{1}{u} \cdot \sum_{n \geq 0} u^n (p/u^2)^n.\]
In particular, this function $f \in \mathfrak{S}[\frac{1}{u-p}]$ lies in $\mathfrak{S}\langle p/u^2 \rangle[1/u] \cap A_{\inf}[\frac{1}{q-1}]$ and has a pole of order $1$ at $x_1$. 
\end{example}

We can now prove the desired full faithfulness theorem for the \'etale realization over $\mathfrak{S}$:

\begin{proof}[Proof of Theorem~\ref{EtaleRealizeBK}]
By passing to a suitable internal $\mathrm{Hom}$, it suffices to show the following: for any $M \in \mathrm{Vect}^{\varphi}(\mathfrak{S})$, the natural map
\[ M^{\varphi=1} \to \left(M[1/E(u)]^{\wedge}_p\right)^{\varphi=1}\]
is bijective. Injectivity is clear as $M$ is finite free over $\mathfrak{S}$. For surjectivity, fix some $\alpha \in \left(M[1/E(u)]^{\wedge}_p\right)^{\varphi=1}$. By \cite[Lemma 4.26]{BMS1}, we know that the image of $\alpha$ in $(M \otimes_{\mathfrak{S}} A_{\inf})[1/E(u)]^{\wedge}_p \simeq M \otimes_{\mathfrak{S}} W(C^\flat)$ lies inside $M \otimes_{\mathfrak{S}} A_{\inf}[\frac{1}{q-1}]$. Using Lemma~\ref{BoundPoles} as well as the fact that $\mathfrak{S}$ is a UFD, we can then write $\alpha = \beta/g$ with $\beta \in M$ and $g \in \mathfrak{S}$ having zeroes only at points of $B = \{x_n\}_{n \geq 1}$ (and in fact at points of finite set $B' \subset B$, but this will not simplify the argument). We shall check that $\alpha \in M$. Since $g$ only has zeroes in $B$, it suffices to show that $\alpha \in M^{\wedge}_{x_m}$ for all $m \geq 1$. 

Regard $\varphi_M$ as an isomorphism $\varphi_M:(\varphi^* M)[1/I] \simeq M[1/I]$. Iterating $r$ times for $r \geq 1$, this gives isomorphisms 
\[ \varphi^r_M:M \otimes_{\mathfrak{S},\varphi^r} \mathfrak{S}[1/I_r] =: (\varphi^{r,*} M)[1/I_r] \simeq M[1/I_r],\]
where $I_r = I \varphi(I) ... \varphi^{r-1}(I)$. The defining property $\varphi_M(\alpha \otimes 1) = \alpha$ iterated $r$ times then gives the equality
\[ \varphi_M^r(\beta \otimes 1) = \frac{\varphi^r(g)}{g} \beta \in M[1/I_r] \quad \forall r \geq 0.\]
As the ideals $I_r$ for all $r \geq 1$ are invertible at the points $x_m$ for all $m \geq 1$, it follows that
\[  \frac{\varphi^r(g)}{g} \beta \in M_{x_m}^{\wedge}\]
for all $r \geq 1$ and $m \geq 1$. But $\varphi^r(g)$ is invertible at $x_m$ for $r \gg 0$ (depending on $m$): the function $g$ has only finitely many poles, and applying $\varphi(-)$ moves a pole at $x_k$ to a pole at $x_{k-1}$ for all $k \in \mathbf{Z}$.  It follows from the above that $\alpha = \frac{\beta}{g} \in M^{\wedge}_{x_m}$ for all $m \geq 1$, as wanted.
\end{proof}

\subsection{Kisin's full faithfulness results}
\label{ss:KisinFF}

In this section, we prove two full faithfulness results originally shown in \cite{Kisin}. First, we relate crystalline $G_K$-representations of $F$-crystals over $\mathfrak{S}$ using Theorem~\ref{MainThm}:

\begin{theorem}[Kisin {\cite[Corollary 1.3.15]{Kisin}}]
\label{KisinFullyFaithfulBK}
Consider the functor
\[  D_{\mathfrak{S}}:\mathrm{Rep}^{\crys}_{\mathbf{Z}_p}(G_K) \to \mathrm{Vect}^{\varphi}(\mathfrak{S}) \]
obtained by postcomposing the inverse to the equivalence in Theorem~\ref{MainThm} with evaluation on the Breuil-Kisin prism $(\mathfrak{S},E(u)) \in X_\Prism$. This functor is fully faithful.
\end{theorem}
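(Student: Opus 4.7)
The plan is to deduce the statement from the main theorem (Theorem~\ref{MainThm}), which realizes $D_{\mathfrak{S}}$ as the composition of the equivalence $T_{\mathcal{O}_K}^{-1}$ with evaluation at the Breuil-Kisin prism. Since $T_{\mathcal{O}_K}$ is an equivalence, it suffices to show that the evaluation functor
\[ \mathrm{ev}_{(\mathfrak{S}, E(u))} : \mathrm{Vect}^{\varphi}(X_\Prism, \mathcal{O}_\Prism) \to \mathrm{Vect}^{\varphi}(\mathfrak{S}) \]
is fully faithful. Faithfulness is immediate because $(\mathfrak{S}, E(u))$ covers the final object of $X_\Prism$ by Example~\ref{BKAinfPrismOK}.

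For fullness, I would use the Cech descent equivalence
\[ \mathrm{Vect}^{\varphi}(X_\Prism, \mathcal{O}_\Prism) \simeq \lim_{[n] \in \Delta} \mathrm{Vect}^{\varphi}(\mathfrak{S}^{(n)}) \]
supplied by Proposition~\ref{VBDescentPrism}, where $\mathfrak{S}^{(\bullet)}$ is the Cech nerve of the Breuil-Kisin cover. Under this identification, a morphism of prismatic crystals $\mathfrak{M} \to \mathfrak{N}$ corresponds to a morphism $f \colon \mathfrak{M}(\mathfrak{S}) \to \mathfrak{N}(\mathfrak{S})$ in $\mathrm{Vect}^{\varphi}(\mathfrak{S})$ satisfying the descent compatibility $\delta_{\mathfrak{N}} \circ p_1^{\ast} f = p_2^{\ast} f \circ \delta_{\mathfrak{M}}$ in $\mathrm{Vect}^{\varphi}(\mathfrak{S}^{(1)})$, where the $\delta$'s are the canonical crystal descent isomorphisms. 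Fullness then amounts to showing this compatibility is automatic for every such $f$.

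To verify the compatibility, I would first pass to the Laurent side: the natural map $\mathfrak{S}^{(1)} \to \mathfrak{S}^{(1)}[1/E(u)]^{\wedge}_p$ is injective because $\mathfrak{S}^{(1)}$ is transversal over $\mathfrak{S}$ via both projections, so the base-change functor $\mathrm{Vect}^{\varphi}(\mathfrak{S}^{(1)}) \to \mathrm{Vect}^{\varphi}(\mathfrak{S}^{(1)}[1/E(u)]^{\wedge}_p)$ is faithful on morphisms; hence it suffices to verify the compatibility after this base change. Parallel Cech descent in the Laurent setting yields $\mathrm{Vect}^{\varphi}(X_\Prism, \mathcal{O}_\Prism[1/\mathcal{I}_\Prism]^{\wedge}_p) \simeq \lim \mathrm{Vect}^{\varphi}(\mathfrak{S}^{(\bullet)}[1/E(u)]^{\wedge}_p)$, and Corollary~\ref{LocSysLaurentFCrysIsog} identifies the left side with $\mathrm{Rep}_{\mathbf{Z}_p}(G_K)$, so the Laurent avatars of $\delta_{\mathfrak{M}}, \delta_{\mathfrak{N}}$ are precisely the canonical Galois-equivariant descent data attached to the \'etale realizations $T(\mathfrak{M}), T(\mathfrak{N})$.

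The hardest step will be to show that the Laurent base change of $f$---which is uniquely determined in $\mathrm{Vect}^{\varphi}(\mathfrak{S}[1/E(u)]^{\wedge}_p)$ by Theorem~\ref{EtaleRealizeBK}---in fact arises from a global morphism in $\mathrm{Vect}^{\varphi}(X_\Prism, \mathcal{O}_\Prism[1/\mathcal{I}_\Prism]^{\wedge}_p)$, equivalently that the induced map $T(\mathfrak{M}) \to T(\mathfrak{N})$ is $G_K$-equivariant and not merely $G_{K_\infty}$-equivariant. My plan is to treat this via an internal-Hom reduction: apply Theorem~\ref{MainThm} to the crystalline representation $T(\mathfrak{M})^{\vee} \otimes T(\mathfrak{N})$ to identify its $G_K$-invariants with the global $\varphi$-fixed sections of the prismatic crystal $\mathfrak{M}^{\vee} \otimes \mathfrak{N}$, and then combine this identification with Theorem~\ref{EtaleRealizeBK} applied to the internal Hom in order to transport $\varphi$-invariance from the Laurent side back to $\mathfrak{S}$, thereby recovering $f$ from a $G_K$-equivariant morphism and closing the argument.
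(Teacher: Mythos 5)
Your reduction to the full faithfulness of the evaluation functor $\mathrm{ev}_{(\mathfrak{S},E(u))}$ is correct, and you have correctly identified the crux: a $\varphi$-equivariant map $f\colon \mathfrak{M}(\mathfrak{S})\to\mathfrak{N}(\mathfrak{S})$ gives, via Theorem~\ref{EtaleRealizeBK}, a $G_{K_\infty}$-equivariant map $T(\mathfrak{M})\to T(\mathfrak{N})$, and the problem is to upgrade $G_{K_\infty}$-equivariance to $G_K$-equivariance. The gap is that the tools you propose for that upgrade cannot close it without circularity.

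Concretely, applying Theorem~\ref{MainThm} to the internal Hom identifies $(T(\mathfrak{M})^\vee\otimes T(\mathfrak{N}))^{G_K}$ with the global $\varphi$-fixed sections of $\mathfrak{M}^\vee\otimes\mathfrak{N}$, and applying Theorem~\ref{EtaleRealizeBK} to the internal Hom identifies $(\mathfrak{M}(\mathfrak{S})^\vee\otimes\mathfrak{N}(\mathfrak{S}))^{\varphi=1}$ with $(T(\mathfrak{M})^\vee\otimes T(\mathfrak{N}))^{G_{K_\infty}}$. Combining the two, the statement you need is exactly $(T(\mathfrak{M})^\vee\otimes T(\mathfrak{N}))^{G_{K_\infty}}=(T(\mathfrak{M})^\vee\otimes T(\mathfrak{N}))^{G_K}$ for the crystalline representation $T(\mathfrak{M})^\vee\otimes T(\mathfrak{N})$. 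But that statement \emph{is} Corollary~\ref{KisinBreuil} (full faithfulness of restriction from $G_K$ to $G_{K_\infty}$ on crystalline representations), and in this paper Corollary~\ref{KisinBreuil} is deduced from Theorem~\ref{KisinFullyFaithfulBK}, the very statement you are trying to prove. You have simply reformulated the problem, not solved it; there is no independent input that breaks the $G_{K_\infty}$-to-$G_K$ barrier.

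The paper avoids this circularity by going through Fontaine's theory rather than the \'etale realization. It constructs a faithful functor $\mathcal{D}\colon \mathrm{Vect}^\varphi(\mathfrak{S}\langle I/p\rangle[1/p])\to\mathrm{MF}^\varphi(K)$ by recovering the isocrystal at $u=0$ and the Hodge filtration at the Hodge--Tate divisor, then shows that the composite $\mathrm{Rep}^{\crys}_{\mathbf{Z}_p}(G_K)[1/p]\xrightarrow{D_{\mathfrak{S}}[1/p]}\mathrm{Vect}^\varphi(\mathfrak{S})[1/p]\to\mathrm{Vect}^\varphi(\mathfrak{S}\langle I/p\rangle[1/p])\xrightarrow{\mathcal{D}}\mathrm{MF}^\varphi(K)$ agrees with Fontaine's $D_{\crys}$. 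Since $D_{\crys}$ is fully faithful (by classical $p$-adic Hodge theory, independently of anything Breuil--Kisin), and each intermediate functor is faithful, $D_{\mathfrak{S}}[1/p]$ is fully faithful. The external input breaking the circle is the full faithfulness of $D_{\crys}$, which has no dependence on $K_\infty$. The integral statement is then a separate divisibility argument using the factorization of the forgetful functor to $\mathrm{Vect}(\mathbf{Z}_p)$ through $D_{\mathfrak{S}}$ via restriction to $G_{K_\infty}$. You will need to find some comparable independent anchor (either this crystalline route or some alternative such as the Beilinson--Ribeiro approach referenced after Corollary~\ref{KisinBreuil}) to salvage the plan.
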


The proof below relies on certain simple lemmas from Kisin's work. Namely, we rely on the arguments in  \cite[\S 1.2.4 - \S 1.2.8]{Kisin} that use Dwork's trick to enlarge the radius of convergence of an isomorphism of $\varphi$-modules on a disc. We do not use the connection $N_\nabla$ from \cite{Kisin}.

\begin{proof}
As $(\mathfrak{S},E(u)) \in X_\Prism$ covers the final object, the functor $D_{\mathfrak{S}}$ is faithful. For fullness, we first prove the statement with $\mathbf{Q}_p$-coefficients by working near the Hodge-Tate point, and then pass to integral coefficients by working at the \'etale point.

First, we construct a natural functor
\[\mathcal{D}:\mathrm{Vect}^{\varphi}(\mathfrak{S}\langle I/p \rangle[1/p]) \to \mathrm{MF}^{\varphi}(K),\]
by undoing Construction~\ref{FiltPhiCrys}. Given $(\mathfrak{M},\varphi_{\mathfrak{M}}) \in \mathrm{Vect}^{\varphi}(\mathfrak{S}\langle I/p\rangle[1/p])$, we set $D(\mathfrak{M}) = \mathfrak{M} \otimes_{\mathfrak{S}\langle I/p \rangle[1/p]} K_0 \in \mathrm{Vect}(K_0)$, where the map $\mathfrak{S}\langle I/p \rangle[1/p] \to K_0$ is induced from $\mathfrak{S}[1/p]/u\mathfrak{S}[1/p] \simeq K_0[1/p]$. As this map is $\varphi$-equivariant, the Frobenius $\varphi_{\mathfrak{M}}$ naturally gives an isomorphism $\varphi_{D(\mathfrak{M})}:\varphi_{K_0}^* D(\mathfrak{M}) \simeq D(\mathfrak{M})$, endowing $D(\mathfrak{M})$ with an $F$-isocrystal structure. Moreover, since $\varphi^* \mathfrak{M}$ is a unit $\varphi$-module over $\mathfrak{S}\langle I/p \rangle[1/p]$, the standard Frobenius trick shows that there is a unique $\varphi$-equivariant isomorphism 
\[ \varphi^\ast D(\mathfrak{M}) \otimes_{K_0} \mathfrak{S}\langle I/p \rangle[1/p] \simeq (\varphi^* \mathfrak{M})\langle I/p \rangle[1/p] \in \mathrm{Vect}^{\varphi}(\mathfrak{S}\langle I/p \rangle[1/p])\] 
lifting the identity after base change to $K_0$ (see \cite[Lemma 1.2.6]{Kisin}). Base changing now to $\mathfrak S[1/p]^\wedge_I$, the Frobenius $\varphi_{\mathfrak{M}}$ gives an isomorphism 
\[ \varphi^\ast D(\mathfrak{M}) \otimes_{K_0} \mathfrak S[1/p]^\wedge_I[1/I] \simeq \mathfrak{M}\otimes_{\mathfrak{S}\langle I/p\rangle[1/p]} \mathfrak S[1/p]^\wedge_I[1/I] \in \mathrm{Vect}(\mathfrak S[1/p]^\wedge_I[1/I]).\] 
Transporting the $I$-adic filtration on $\mathfrak{M}\otimes_{\mathfrak{S}\langle I/p\rangle[1/p]} \mathfrak S[1/p]^\wedge_I$ along this isomorphism and taking its image down along
\[ \varphi^\ast D(\mathfrak{M}) \otimes_{K_0} \mathfrak S[1/p]^\wedge_I \xrightarrow{I=0} \varphi^\ast D(\mathfrak{M}) \otimes_{K_0} K =: D(\mathfrak{M})\otimes_{K_0,\varphi} K\] 
then gives a natural filtration $\mathrm{Fil}^*$ on $D(\mathfrak{M})\otimes_{K_0,\varphi} K$. The triple $(D(\mathfrak{M}),\varphi_{D(\mathfrak{M})},\mathrm{Fil}^*)$ then defines an object of $\mathrm{MF}^{\varphi}(K)$, yielding the promised functor $\mathcal{D}$.  Note that since $\mathcal{D}(\mathfrak{M})$ naturally recovers $\varphi^* \mathfrak{M}$, the functor $\mathcal{D}$ is faithful. Moreover, it is easy to see (see \cite[Proposition 1.2.8]{Kisin}) that the functor $\mathcal{D}(-)$ provides a left-inverse to the composition of the functor $D \mapsto \mathcal{M}(D)_{\langle I/p \rangle}$ from Construction~\ref{FiltPhiCrys} with evaluation over $\mathfrak{S}$.

Now consider the composition
\[ \mathrm{Rep}^{\crys}_{\mathbf{Z}_p}(G_K)[1/p] \xrightarrow{D_{\mathfrak{S}}[1/p]} \mathrm{Vect}^{\varphi}(\mathfrak{S})[1/p] \xrightarrow{- \otimes_{\mathfrak{S}} \mathfrak{S}\langle I/p \rangle[1/p]}\mathrm{Vect}^{\varphi}(\mathfrak{S}\langle I/p \rangle[1/p]) \xrightarrow{\mathcal{D}} \mathrm{MF}^{\varphi}(K).  \]
Unwinding definitions and using the last sentence in the previous paragraph, this composition coincides with Fontaine's functor $D_{\crys}$ and is thus fully faithful. On the other hand, each functor in the above composition is faithful: this was shown in the first paragraph for $D_{\mathfrak{S}}[1/p]$, in the last paragraph for $\mathcal{D}$, and is clear for the base change functor by injectivity of $\mathfrak{S} \to \mathfrak{S}\langle I/p \rangle[1/p]$. This shows that $D_{\mathfrak{S}}[1/p]$ must be fully faithful, proving the theorem up to inverting $p$. 

It now remains to prove that $D_{\mathfrak{S}}$ is itself fully faithful. Fix  $L,L' \in \mathrm{Rep}^{\crys}_{\mathbf{Z}_p}(G_K)$ and a map $\alpha:D_{\mathfrak{S}}(L) \to D_{\mathfrak{S}}(L')$. By the rational version of the theorem, there exists some $n \geq 0$ such that $p^n \alpha = D_{\mathfrak{S}}(a)$ for a unique map $a:L \to L'$ of $G_K$-representations. We must show that $a$ is divisible by $p^n$ as a map of $G_K$-representations. In fact, it suffices to show divisibility merely as a map of $\mathbf{Z}_p$-modules: the resulting map $a/p^n$ is then automatically $G_K$-equivariant as $a$ is so. But the forgetful functor $\mathrm{Rep}^{\crys}_{\mathbf{Z}_p}(G_K) \to \mathrm{Vect}(\mathbf{Z}_p)$ factors over $D_{\mathfrak{S}}$: indeed, postcomposing $D_{\mathfrak{S}}$ with the \'etale realization 
\[ \mathrm{Vect}^{\varphi}(\mathfrak{S}) \to \mathrm{Vect}^{\varphi}(\mathfrak{S}[1/E(u)]^{\wedge}_p) \simeq \mathrm{Rep}_{\mathbf{Z}_p}(G_{K_\infty})\]
gives the obvious restriction map 
\[ \mathrm{Rep}^{\crys}_{\mathbf{Z}_p}(G_K) \to \mathrm{Rep}_{\mathbf{Z}_p}(G_{K_\infty}),\] 
which certainly factors the forgetful functor for the left side. It is then clear that $a$ is divisible by $p^n$ as a map of $\mathbf{Z}_p$-modules, as wanted.
\end{proof}

Using the above as well as Theorem~\ref{EtaleRealizeBK}, we deduce the following full faithfulness result for Galois representations that was conjectured by Breuil and proven by Kisin:

\begin{corollary}[Kisin {\cite[Corollary 2.1.14]{Kisin}}]
\label{KisinBreuil}
The restriction functor
\[ \mathrm{Rep}^{\crys}_{\mathbf{Z}_p}(G_K) \to \mathrm{Rep}_{\mathbf{Z}_p}(G_{K_\infty})\]
is fully faithful. 
\end{corollary}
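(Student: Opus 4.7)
The plan is to realize the restriction functor $\mathrm{Rep}^{\crys}_{\mathbf{Z}_p}(G_K) \to \mathrm{Rep}_{\mathbf{Z}_p}(G_{K_\infty})$ as the composition of three functors, each of which is already known to be fully faithful (or an equivalence) by the results of this paper. Explicitly, I would factor it as
\[
\mathrm{Rep}^{\crys}_{\mathbf{Z}_p}(G_K) \xrightarrow{D_{\mathfrak{S}}} \mathrm{Vect}^{\varphi}(\mathfrak{S}) \xrightarrow{\,-\otimes_{\mathfrak{S}} \mathfrak{S}[1/E(u)]^{\wedge}_p\,} \mathrm{Vect}^{\varphi}(\mathfrak{S}[1/E(u)]^{\wedge}_p) \simeq \mathrm{Rep}_{\mathbf{Z}_p}(G_{K_\infty}).
\]
The first arrow is fully faithful by Theorem~\ref{KisinFullyFaithfulBK}, the second is fully faithful by Theorem~\ref{EtaleRealizeBK}, and the third is the standard equivalence identifying $\varphi$-modules over $\mathfrak{S}[1/E(u)]^{\wedge}_p$ with $\mathbf{Z}_p$-local systems on the perfectoid Kummer tower (essentially a consequence of Corollary~\ref{LocSysLaurentFCrysIsog} applied to the perfection of the Breuil-Kisin prism, whose Hodge-Tate locus identifies with $\mathrm{Spf}(\mathcal{O}_{K_\infty^\flat})$, together with tilting).

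It then remains to identify the above composition with the naive restriction functor $L \mapsto L|_{G_{K_\infty}}$. This is a compatibility between \'etale realizations: the functor $D_{\mathfrak{S}}$ is defined (Theorem~\ref{KisinFullyFaithfulBK}) as the inverse of the equivalence $T_{\mathcal{O}_K}$ of Theorem~\ref{MainThm} followed by evaluation at the Breuil-Kisin prism $(\mathfrak{S},E(u)) \in X_{\Prism}$. Since \'etale realization of a prismatic $F$-crystal $\mathcal{E}$ on $X$ is by construction computed by the assignment $(A,I) \mapsto \mathcal{E}(A) \otimes_A A[1/I]^\wedge_p$ (Construction~\ref{EtaleRealize} and Corollary~\ref{LocSysLaurentFCrys}), the base change $D_{\mathfrak{S}}(L) \otimes_{\mathfrak{S}} \mathfrak{S}[1/E(u)]^{\wedge}_p$ recovers the value of the Laurent $F$-crystal $T_{\mathcal{O}_K}^{-1}(L)$, viewed as an object of $\mathrm{Rep}_{\mathbf{Z}_p}(G_{K_\infty})$ through the perfection of $(\mathfrak{S},E(u))$. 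One then checks directly from the construction that this is exactly $L|_{G_{K_\infty}}$, since evaluation on the perfection of $(\mathfrak{S},E(u))$ factors through $\mathrm{Spf}(\mathcal{O}_{K_\infty}^\flat)_{\Prism}$.

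Granting this identification, the corollary is formal: a composition of fully faithful functors is fully faithful. I do not expect any step to be a serious obstacle, as the heart of the argument has already been settled in Theorems~\ref{KisinFullyFaithfulBK} and \ref{EtaleRealizeBK}; the only thing worth being careful about is the compatibility statement in the second paragraph, which boils down to tracking that the \'etale realization on the prismatic side (which records $L$ together with its full $G_K$-action) specializes, under restriction to the Kummer tower, to the \'etale realization of the Breuil-Kisin module. This is a matter of unwinding definitions rather than any substantial input.
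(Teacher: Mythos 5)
Your proposal is correct and matches the paper's own proof essentially verbatim: the paper factors the restriction functor through exactly the same chain $D_{\mathfrak{S}}$, base change along $\mathfrak{S}\to\mathfrak{S}[1/E(u)]^\wedge_p$, and the Laurent $F$-crystal equivalence, citing Theorems~\ref{KisinFullyFaithfulBK} and \ref{EtaleRealizeBK} and Corollary~\ref{LocSysLaurentFCrysIsog} for the three arrows. Your extra discussion of why the composition equals restriction is a compatibility the paper asserts (inside the proof of Theorem~\ref{KisinFullyFaithfulBK}) rather than spells out, so you are not diverging, just expanding.
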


An alternative direct proof of this result was also given in \cite{BeilinsonRiberioKisin}.

\begin{proof}
We can factor the restriction functor factors as a composition
\[ \mathrm{Rep}^{\crys}_{\mathbf{Z}_p}(G_K) \xrightarrow{D_{\mathfrak{S}}} \mathrm{Vect}^{\varphi}(\mathfrak{S}) \xrightarrow{- \otimes_{\mathfrak{S}} \mathfrak{S}[1/E(u)]^{\wedge}} \mathrm{Vect}^{\varphi}(\mathfrak{S}[1/E(u)]^{\wedge}) \xrightarrow{\simeq} \mathrm{Rep}_{\mathbf{Z}_p}(G_{K_\infty}),\]
where $D_{\mathfrak{S}}$ is the fully faithful functor from Theorem~\ref{KisinFullyFaithfulBK}, the base change functor $(- \otimes_{\mathfrak{S}}  \mathfrak{S}[1/E(u)]^{\wedge})$ is fully faithful by Theorem~\ref{EtaleRealizeBK}, and the last equivalence is from Corollary~\ref{LocSysLaurentFCrysIsog}. As each constituent functor is fully faithful, so is the composition.
\end{proof}

\begin{remark}[Compatibility with Kisin's work]
Strictly speaking, we haven't yet shown that the functor in Theorem~\ref{KisinFullyFaithfulBK} coincides with the one from \cite{Kisin}. To check this, call the latter $D_{\mathfrak{S}}'$. Thanks to Corollary~\ref{KisinBreuil}, to show $D_{\mathfrak{S}} \simeq D_{\mathfrak{S}}'$, it suffices to show that the two functors
\[ \mathrm{Rep}^{\crys}_{\mathbf{Z}_p}(G_K) \xrightarrow{D_{\mathfrak{S}}} \mathrm{Vect}^{\varphi}(\mathfrak{S}) \xrightarrow{- \otimes_{\mathfrak{S}} \mathfrak{S}[1/E(u)]^{\wedge}} \mathrm{Vect}^{\varphi}(\mathfrak{S}[1/E(u)]^{\wedge}) \xrightarrow{\simeq} \mathrm{Rep}_{\mathbf{Z}_p}(G_{K_\infty}),\]
and
\[ \mathrm{Rep}^{\crys}_{\mathbf{Z}_p}(G_K) \xrightarrow{D_{\mathfrak{S}}'} \mathrm{Vect}^{\varphi}(\mathfrak{S}) \xrightarrow{- \otimes_{\mathfrak{S}} \mathfrak{S}[1/E(u)]^{\wedge}} \mathrm{Vect}^{\varphi}(\mathfrak{S}[1/E(u)]^{\wedge}) \xrightarrow{\simeq} \mathrm{Rep}_{\mathbf{Z}_p}(G_{K_\infty})\]
are naturally isomorphic. But these are both identified with restriction along $G_K \subset G_{K_\infty}$ by unwinding definitions, so the compatibility follows. 
\end{remark}

\begin{remark}[Liu's compatibility for different uniformizer choices]
\label{LiuCompat}
Liu's paper \cite{LiuKisinCompat} studies the dependence of the functor $D_{\mathfrak{S}}$ from Theorem~\ref{KisinFullyFaithfulBK} on the choice of the uniformizer $\pi$. To formulate his theorem, fix two uniformizers $\pi$ and $\pi'$ equipped with a compatible system of $p$-power roots $\underline{\pi},\underline{\pi}' \in \mathcal{O}_C^\flat$; write $(\mathfrak{S}_{\pi}, (E))$ and $(\mathfrak{S}_{\pi'},(E'))$ for the corresponding Breuil-Kisin prisms. The choices of $\underline{\pi}, \underline{\pi}'$ determine unique  $\delta$-$W$-algebra maps $\mathfrak{S}_{\pi} \to A_{\inf}$ and $\mathfrak{S}_{\pi'} \to A_{\inf}$. Liu shows the following: for any $L \in \mathrm{Rep}^{\crys}_{\mathbf{Z}_p}(G_K)$, the modules $D_{\mathfrak{S}_{\pi}}(L) \otimes_{\mathfrak{S}_{\pi}} A_{\inf}$ and $D_{\mathfrak{\pi}'}(L) \otimes_{\mathfrak{S}_{\pi'}} A_{\inf}$ are identified in a $(\varphi,G_K)$-equivariant manner; here the implicit $G_K$-action on the base change is not automatic from \cite{Kisin}, and in fact is the main result of \cite{LiuNoteLattice} (but it is already uniquely determined by the given $G_K$-action on the \'etale realization). Such results are now automatic from the prismatic perspective: they follow from the crystal property of evaluations of prismatic $F$-crystals applied to the maps 
\[ (\mathfrak{S}_{\pi}, (E)) \to (A_{\inf},\ker(\tilde{\theta})) \gets (\mathfrak{S}_{\pi'}, (E'))\] 
in $\mathrm{Spf}(\mathcal{O}_K)_\Prism$ together with the observation that $(A_{\inf},\ker(\tilde{\theta}))$ is a $G_K$-equivariant object of $\mathrm{Spf}(\mathcal{O}_K)_\Prism$.
\end{remark}

\subsection{The logarithmic connection}
\label{ss:LogConnBK}

In this section, we explain (Corollary~\ref{LogConnectionBK}) why the $\mathfrak{S}$-module attached to any crystal of vector bundles on $X_\Prism$ (without any Frobenius data) carries a natural logarithmic connection after base change to the local ring $\mathcal{S} := \mathfrak{S}[1/p]^{\wedge}_{(E(u))}$ at the Hodge-Tate point. In fact, the connection is directly obtained from the descent data underlying the crystal: our strategy is to understand the descent data on the $\mathfrak{S}$-module attached to such a crystal to good enough accuracy for obtaining the logarithmic connection.\footnote{One could also construct this log-connection on a larger region stable under the Frobenius map. For prismatic $F$-crystals, the resulting log-connection will then automatically (by functoriality of the construction) commute with the Frobenius. Kisin \cite{Kisin} shows the uniqueness of such a log-connection, so it agrees with his construction.} To do so, we first make the relevant rings more explicit:

\begin{construction}[The prismatic Cech nerve of $\mathfrak{S}$]
\label{PrismaticCechNerve}
Let $\mathfrak{S}^{(\bullet)}$ denote the cosimplicial ring obtained by taking the Cech nerve of $(\mathfrak{S}, E(u))$ in $X_\Prism$. The multiplication map on $\mathfrak{S}$ induces a surjection 
\[ \mu_\Prism:\mathfrak{S}^{(\bullet)} \to \mathfrak{S}\] 
of cosimplicial $\delta$-rings (where the target is a constant diagram); write $J_\Prism^{(\bullet)} \subset \mathfrak{S}^{(\bullet)}$ for the kernel of this map.  By construction, there is a natural map $\mathfrak{S}^{\otimes [\bullet]} \to \mathfrak{S}^{(\bullet)}$ of cosimplicial rings, where $\mathfrak S^{\otimes [i]}$ is the $i$-fold tensor product of $\mathfrak S$ over $W$. Each term $\mathfrak{S}^{(i)}$ is a transversal prism over $X$, i.e.~$\{p,E(u)\}$ gives a regular sequence of length $2$ on each $\mathfrak{S}^{(i)}$. (See \cite{ALBPrismaticDieudonne} for the term ``transversal prism''.)

For future use, let us describe the first two terms explicitly. Clearly $\mathfrak{S}^{(0)} = \mathfrak{S}$. In degree $1$, we have
\[ \mathfrak{S}^{(1)} = W\llbracket u,v\rrbracket \{\frac{u-v}{E(u)}\}^{\wedge}_{(p,E(u))}.\]
Note that since $u = v \mod E(u) \mathfrak{S}^{(1)}$, we also have $E(v) = 0 \mod E(u)\mathfrak{S}^{(1)}$, whence $E(v)/E(u) \in \mathfrak{S}^{(1)}$ is a unit by the irreducibility lemma on distinguished elements, so the right side above can also be described by replacing $E(u)$ with $E(v)$. By construction, we have
\[ \delta^n\left(\frac{u-v}{E(u)}\right) \in J_\Prism^{(1)} \quad \forall n \geq 0.\]
It will also be convenient to give a name to the following non-completed version:
\[ \mathfrak{S}^{(1),nc} := W\llbracket u,v\rrbracket \{\frac{u-v}{E(u)}\}.\]
The surjection $\mu$ restricts to a surjection $\mathfrak{S}^{(1),nc} \to \mathfrak{S}$ with kernel $J_\Prism^{(1),nc}$; moreover, $\mathfrak{S}^{(1)}$ (resp. $J_\Prism$) can be recovered by $(p,E(u))$-completion of $\mathfrak{S}^{(1),nc}$ (resp. $J_\Prism^{(1),nc}$).
\end{construction}

\begin{construction}[The logarithmic Cech nerve on the generic fibre]
Regard the discrete valuation ring $\mathcal{S} := \mathfrak{S}[1/p]^{\wedge}_{E(u)}$ as an adic ring with the $E(u)$-adic topology; endow it with the prelog structure defined by $E(u)^{\mathbf{N}}$. Write $\mathcal{S}^{(\bullet)}$ for its Cech nerve in the log infinitesimal site of $\mathcal{S}$. There is a surjective multiplication map
\[ \mu_{\log}:\mathcal{S}^{(\bullet)}\to \mathcal{S}  \]
with kernel $J_{\log}^{(\bullet)}$; note that $\mathcal{S}^{(\bullet)}$ is $J^{(\bullet)}_{\log}$-adically complete, and hence (by the $E(u)$-adic completeness of $\mathcal{S}^{(\bullet)}/J_{\log}^{(\bullet)} \simeq \mathcal{S}$) also $E(u)$-adically complete. By construction, there is a natural map  $\mathfrak{S}^{\otimes [\bullet]} \to \mathcal{S}^{(\bullet)}$ of cosimplicial rings. Moreover, general nonsense on log infinitesimal cohomology shows that for each $n \geq 0$, each term of the cosimplicial $\mathcal{S}$-module $(J_{\log}^{(\bullet)})^n/(J_{\log}^{(\bullet)})^{n+1}$ is free over $\mathcal{S}^{(\bullet)}_{\log}/J_{\log}^{(\bullet)} \simeq \mathcal{S}$.

For future use, we make the objects explicit in low degrees. Clearly $\mathcal{S}^{(0)} = \mathcal{S}$. In degree $1$, we can identify $\mathcal{S}^{(1)}$ as the  completion of $W\llbracket u,v \rrbracket[1/p, \left(\frac{E(u)}{E(v)}\right)^{\pm 1}]$ along the kernel of the multiplication map to $\mathcal{S}$. 
\end{construction}

The crucial result in this section is the following relation:

\begin{lemma}[Relating the prismatic and logarithmic Cech nerves]
\label{PrismaticLogCech}
There is a natural map 
\[ \mathfrak{S}^{(\bullet)}\langle I/p \rangle[1/p] \to \mathcal{S}^{(\bullet)}_{\log}/(J^{(\bullet)}_{\log})^2\] 
of cosimplicial rings, extending the natural map $\mathfrak{S} \to \mathcal{S}$ in degree $0$. 
\end{lemma}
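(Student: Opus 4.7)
The plan is to construct the map explicitly in each cosimplicial degree, with the key input being a first-order Taylor expansion of the Eisenstein polynomial $E$. In degree $0$, we have $\mathcal{S}^{(0)}/(J_{\log}^{(0)})^2 = \mathcal{S}$, which is the formal completion of $\mathfrak{S}[1/p]$ at $(E(u))$; since the Hodge--Tate point lies in the rational subset $\{|E(u)| \leq |p|\}$ cut out by $\mathfrak{S}\langle I/p\rangle[1/p]$, formal completion at this point produces the degree-$0$ map $\mathfrak{S}\langle I/p\rangle[1/p] \to \mathcal{S}$.

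In degree $1$, I would identify $\mathcal{S}^{(1)}/(J_{\log}^{(1)})^2 \simeq \mathcal{S}[\epsilon]/(\epsilon^2)$ with $\epsilon = \mathrm{dlog}(E(u))$ generating the log K\"ahler differentials $\Omega^1_{\mathcal{S}/W,\log} \cong J_{\log}^{(1)}/(J_{\log}^{(1)})^2$, so that the two structure maps of the Čech nerve send $u \mapsto u$ and $u \mapsto v$ respectively. Recall that $\mathfrak{S}^{(1),nc} = W\llbracket u, v\rrbracket\{y\}$ with $y = (u-v)/E(u)$. Since $(v-u)^2 \in (J_{\log}^{(1)})^2 = 0$ in the target, the Taylor expansion $E(v) \equiv E(u) + E'(u)(v-u) \pmod{(v-u)^2}$ combined with $E(v)/E(u) \equiv 1 + \epsilon$ (from the log structure) gives
\[
v - u \equiv (E(u)/E'(u))\,\epsilon
\]
in $\mathcal{S}^{(1)}/(J_{\log}^{(1)})^2$. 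Since $E$ is Eisenstein and $p$ has been inverted, $E'(u)$ is a unit in $\mathcal{S}$, so setting $y \mapsto -\epsilon/E'(u) \in \mathcal{S}\cdot \epsilon$ is well-defined and compatible with the ring-theoretic constraint $E(u) y + (v-u) = 0$ defining the relevant part of the presentation of $\mathfrak{S}^{(1),nc}$. The $\delta$-iterates are then pinned down by using that after inverting $p$ one can equivalently parameterize $\mathfrak{S}^{(1),nc}[1/p]$ by the Frobenius iterates $\varphi^n(y) = (u^{p^n}-v^{p^n})/\varphi^n(E(u))$; substituting our images of $u$ and $v$ and simplifying modulo $\epsilon^2$ produces explicit elements of $\mathcal{S}\cdot\epsilon$, where one uses that $\varphi^n(E(u))$ is a unit in $\mathcal{S}$ (since $\pi^{p^n}$ is not a root of $\varphi^n(E)$ for $n \geq 1$). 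Higher cosimplicial degrees follow the same pattern, with cosimplicial functoriality built in by construction.

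The main obstacle is the bookkeeping at the interface of the $\delta$-structure, the $(p,E(u))$-adic completion, and the log-infinitesimal quotient: one must verify that the formulas just sketched for the images of $\varphi^n(y)$ assemble into a genuinely continuous ring map on the full $p$-completed rational localization $\mathfrak{S}^{(1)}\langle I/p\rangle[1/p]$. This is controlled by the simple fact that $\epsilon^2 = 0$ in the target: every polynomial expression in $y$ and its Frobenius iterates collapses into the split square-zero extension $\mathcal{S}\oplus \mathcal{S}\cdot\epsilon$, so convergence in the source for the $p$-adic topology reduces to convergence in $\mathcal{S}$ for the $E(u)$-adic topology, which is automatic from the completeness of the DVR $\mathcal{S}$.
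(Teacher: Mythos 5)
Your explicit description of the degree-$1$ map is correct and instructive: identifying $\mathcal{S}^{(1)}/(J_{\log}^{(1)})^2$ with the square-zero extension $\mathcal{S}[\epsilon]/(\epsilon^2)$ where $\epsilon = \mathrm{dlog}(E)$, Taylor-expanding $E(v)$ to get $v-u \equiv (E(u)/E'(u))\epsilon$, and sending $y = (u-v)/E(u) \mapsto -\epsilon/E'(u)$ is exactly the right starting point, and your device of using the Frobenius iterates $\varphi^n(y)$ rather than $\delta^n(y)$ (legitimate since $p$ is invertible) cleanly handles the extension of the map to the uncompleted prismatic envelope $\mathfrak{S}^{(1),nc}$.

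However, you dismiss the genuinely hard step — extending the map over the $(p,E(u))$-completion from $\mathfrak{S}^{(1),nc}$ to $\mathfrak{S}^{(1)}$ — with reasoning that does not hold up. You claim that because $\epsilon^2 = 0$, "convergence in the source for the $p$-adic topology reduces to convergence in $\mathcal{S}$ for the $E(u)$-adic topology, which is automatic." This is false. Fix $k$ and work modulo $E(u)^k$; what remains is a $p$-adic Banach space over $K_0$, and the question is whether the images of the generators $\delta^n(y)$ of $J_\Prism^{(1),nc}$ form a $p$-adically bounded subset of it. This is not automatic: since $\delta(x) = (\varphi(x)-x^p)/p$, iterating $\delta$ produces denominators $p^n$, and indeed $\delta^n(y)$ agrees with $\frac{1}{p^n}\varphi^n(y)$ modulo $(J_{\log}^{(1)})^2$. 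Boundedness of $\frac{1}{p^n}\varphi^n(y) = \frac{u^{p^n}-v^{p^n}}{p^n \varphi^n(E(u))}$ therefore requires the growing factor $p^n$ in the denominator to be cancelled by $p$-divisibility coming from the denominator $\varphi^n(E(u))$. The paper's proof achieves this via the distinguished-element identity $\varphi^n(E(u)) = p \cdot (\text{unit}) \bmod E(u)^{p^n}$, showing that the norms $\{|1/\varphi^n(E(u))|\}_{n \geq \log_p k}$ stabilize in $\mathcal{S}/E(u)^k$. Without this input the argument collapses: if $\varphi^n(E(u))$ were merely a unit (as you note) with no $p$-adic control, the sequence $\frac{1}{p^n}\varphi^n(y)$ would be unbounded and the map would fail to extend across the completion. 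So the square-zero structure simplifies the formulas but does not by itself supply the $p$-adic boundedness; you need the interaction between the $\delta$-structure and the distinguished element $E(u)$ explicitly.
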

\begin{proof}
As $p$ is invertible on the target, it suffices to construct a map out of $\mathfrak{S}^{(\bullet)}\langle I/p \rangle$ as in the lemma. For notational ease, we shall construct the extension in degree $1$, i.e., we extend the canonical  map $\mathfrak{S}^{\otimes [1]} \to \mathcal{S}^{(1)}/(J_{\log}^{(1)})^2$ naturally across $\mathfrak{S}^{\otimes [1]} \to \mathfrak{S}^{(1)} \to \mathfrak{S}^{(1)}\langle I/p \rangle$; a similar argument applies to the higher degree terms as well. We shall construct this extension by extending successively along each map in the composition $\mathfrak{S}^{\otimes [1]} \to \mathfrak{S}^{(1),nc} \to \mathfrak{S}^{(1)} \to \mathfrak{S}^{(1)}\langle I/p \rangle$.

To extend to $\mathfrak{S}^{(1),nc}$, we first observe that $u = v \mod E(u) \mathcal{S}^{(1)}$: indeed, since $\left(E(u)/E(v)\right)^{\pm 1}$ exists in $\mathcal{S}^{(1)}$, both $u$ and $v$ are roots of the separable polynomial $E(T) \in K_0[T]$ in the ring $\mathcal{S}^{(1)}/E(u)$ and give same element in the quotient $\mathcal{S}/E(u)$ under the multiplication map, so the equality follows by Hensel's lemma as $\mathcal{S}^{(1)}/E(u)$ is complete along the kernel of the multiplication map to $K = \mathcal{S}/E(u)$. Since $E(u)$ is a nonzerodivisor on $\mathcal{S}^{(1)}$, it follows that $\frac{u-v}{E(u)} \in \mathcal{S}^{(1)}$. Moreover, $\varphi^n E(u)$ gives a unit in $K$ for $n \geq 1$, and hence is also a unit in the pro-infinitesimal thickening $\mathcal{S}^{(1)} \to \mathcal{S} \to K$. Consequently, we have $\{\varphi^n\left(\frac{u-v}{E(u)}\right)\}_{n \geq 0} \in \mathcal{S}^{(1)}$. As $p$ is invertible in $\mathcal{S}^{(1)}$, it follows that we also have $\{\delta^n\left(\frac{u-v}{E(u)}\right)\}_{n \geq 0} \in \mathcal{S}^{(1)}$. As $\mathfrak{S}^{(1),nc}$ is generated over $\mathfrak{S}^{\otimes [1]}$ by these elements subject to the ``obvious'' relations, we obtain an induced map 
\[ \mathfrak{S}^{(1),nc} \to \mathcal{S}^{(1)}\]
factoring the canonical map $\mathfrak{S}^{\otimes [1]} \to \mathcal{S}^{(1)}_{\log}$. 

Next, we extend to $\mathfrak{S}^{(1)}$. For this, observe that the compatibility with the multiplication map shows that extension constructed in the previous paragraph carries $J_\Prism^{(1),nc}$ into $J_{\log}^{(1)}$. Now $\mathfrak{S}^{(1),nc}/J_\Prism^{(1),nc} \simeq \mathfrak{S}$, so $\mathfrak{S}^{(1)}$ is obtained from $\mathfrak{S}^{(1),nc}$ by pushing out along the map from $J_\Prism^{(1),nc}$ to its $(p,E(u))$-completion. So it suffices to show that the map
\[ J_\Prism^{(1),nc} \to J_{\log}^{(1)}/(J_{\log}^{(1)})^2\]
factors over the $(p,E(u))$-completion of the source. Note that the target above is a free $\mathcal{S}$-module and hence $E(u)$-complete. Fixing some $k \geq 1$, it suffices to show that the resulting map
\[ J_\Prism^{(1),nc} \to \left(J_{\log}^{(1)}/(J_{\log}^{(1)})^2\right)/E(u)^k =: Q_k\]
factors over the $p$-completion of the source. We may regard this as a map from non-unital $\mathfrak{S}^{\otimes [1]}$-algebras, where the target $Q_k$ has a square-zero multiplication. Moreover, $Q_k$ is naturally a $p$-adic Banach vector space over $K_0$, while the source is generated by $\{\delta^n\left(\frac{u-v}{E(u)}\right)\}_{n \geq 0}$ as a non-unital $\mathfrak{S}^{\otimes [1]}$-algebra. As the multiplication on $Q_k$ is square-zero, it suffices to show that the image of the set $\{\delta^n\left(\frac{u-v}{E(u)}\right)\}_{n \geq 0}$ in the $p$-adic Banach space $Q_k$ is bounded. In fact, to prove boundedness, we may ignore finitely many terms, so it suffices to prove that $\{\delta^n\left(\frac{u-v}{E(u)}\right)\}_{n \geq c}$ maps to a bounded set in $Q_k$ for some $c = c(k)$. Observe that $\delta^n\left(\frac{u-v}{E(u)}\right)$ and $\frac{1}{p^n}\varphi^n\left(\frac{u-v}{E(u)}\right)$ have the same image in $Q_k$ for all $n \geq 0$: this follows by induction on $n$, using the formula $\delta(x) = \varphi(x)/p - x^p/p$ as well as the observation that any element of $(J_{\log}^{(1)})^p$ maps to $0$ in $Q_k$ since $p \geq 2$. It is therefore sufficient to check that  $\{\frac{1}{p^n}\varphi^n\left(\frac{u-v}{E(u)}\right)\}_{n \geq c}$ gives a bounded set in $Q_k$ for some $c \geq 0$. We then simplify
\[ \frac{1}{p^n}\varphi^n\left(\frac{u-v}{E(u)}\right) = \frac{u^{p^n}-v^{p^n}}{p^n \varphi^n(E(u))} = \frac{(u-v) \cdot (\sum_{i=0}^{p^n-1} u^i v^{p^n-i})}{p^n \varphi^n(E(u))}.\]
Now $\left(\mathcal{S}^{(1)}/(J_{\log}^{(1)})^2\right)/E(u)^k$ is a square-zero extension of $\mathcal{S}/E(u)^k$ by $Q_k$ and $u-v \in Q_k \subset \mathcal{S}^{(1)}/E(u)^k$, so multiplication by $u-v$ gives a map $\mathcal{S}/E(u)^k \to \mathcal{S}^{(1)}/((J_{\log}^{(1)})^2, E(u)^k)$; one checks that this map is bounded. Our problem then translates to check that $\{\frac{\sum_{i=0}^{p^n-1} u^i v^{p^n-i}}{p^n \varphi^n E(u)}\}_{n \geq c}$ has bounded image in $\mathcal{S}/E(u)^k$. But this image is given by setting $u=v$, so we are reduced to checking that $\{\frac{p^n u^{p^n}}{p^n \varphi^n E(u)}\}_{n \geq c}$ = $\{\frac{u^{p^n}}{\varphi^n E(u)}\}_{n \geq c}$ is bounded in the Banach algebra $\mathcal{S}/E(u)^k$ for some $c \geq 0$. As $u$ is power bounded, it is enough to show that $\{\frac{1}{\varphi^n E(u)}\}_{n \geq c}$ is bounded for some $c \geq 0$. But for any distinguished element $d$ in a $p$-local $\delta$-ring $A$, we have\footnote{This follows from the properties of Joyal's operations $\{\delta_n:A \to A\}_{n \geq 1}$ on $A$ extending $\delta = \delta_1$ (see \cite[Remarks 2.13, 2.14]{BhattScholzePrisms}). Indeed, these operations satisfy the following: for any $f \in A$ and $n \geq 1$, one has
\[ \varphi^n(f) = f^{p^n} + p \delta_1(f)^{p^{n-1}} + p^2 \delta_2(f)^{p^{n-2}} + ... + p^n \delta_n(f).\] 
Now if $\delta_1(f) = \delta(f)$ is a unit and $A$ is $p$-local, it follows that $\varphi^n(f) = pu  \mod f^{p^n} A$ for a unit $u \in A$.} 
\[ \varphi^n(d) = pu \mod d^cA\] 
for a unit $u$ and $c \leq p^n$; applying this observation to $A=\mathfrak{S}$ and $d=E(u)$ then shows that all elements of  $\{\frac{1}{\varphi^n E(u)}\}_{n \geq \log_p(k)}$ have the same absolute value in $\mathcal{S}/E(u)^k$, which trivially gives boundedness. Thus, we have constructed the map $\mathfrak{S}^{(1)} \to \mathcal{S}^{(1)}/(J_{\log}^{(1)})^2$ extending the natural map $\mathfrak{S}^{\otimes [1]} \to \mathcal{S}^{(1)} \to \mathcal{S}^{(1)}/(J_{\log}^{(1)})^2$.

Finally, it remains to extend across $\mathfrak{S}^{(1)} \to \mathfrak{S}^{(1)}\langle E(u)/p \rangle$. As $\mathfrak{S}^{(1)}\langle E(u)/p \rangle$ is the $p$-adic completion of $\mathfrak{S}^{(1)}[E(u)/p]$ and $p$ is invertible on $\mathcal{S}^{(1)}/(J_{\log}^{(1)})^2$, we may follow the reasoning used in the previous paragraph to reduce to checking that the set $\{\frac{E(u)^n}{p^n}\}_{n \geq 1}$ is bounded in the $p$-adic Banach algebra $\left(\mathcal{S}^{(1)}/(J_{\log}^{(1)})^2\right)/E(u)^k$  for each $k \geq 1$. But this is obvious as $\frac{E(u)^n}{p^n} = 0$ in this algebra for $n \geq k$. 
\end{proof}

\begin{construction}[Log connections on $\mathcal{S}$]
Write $\mathrm{Vect}^{\nabla,\log}(\mathcal{S})$ for the category of vector bundles $M$ on $\mathcal{S}$ equipped with a (continuous) logarithmic connection $\nabla:M \to M \otimes_{\mathcal{S}} \Omega^1_{\mathcal{S},\log}$. As $\Omega^1_{\mathcal{S},\log}$ is a free $\mathcal{S}$-module of rank $1$ with generator $\frac{du}{E(u)}$, specifying the connection $\nabla$ is equivalent to specifying an operator $N_\nabla = E(u)\frac{d}{du}:M \to M$ satisfying
\[ N_\nabla(f(u) m) = E(u) f'(u) m + f(u) N_\nabla(m)\]
for all $f(u) \in \mathcal{S}$ and $m \in M$. 
\end{construction}

\begin{corollary}[From prismatic crystals to log connections]
\label{LogConnectionBK}
Base changing along $\mathfrak{S}\langle I/p \rangle[1/p] \to \mathcal{S}=\mathfrak S[1/p]^\wedge_I$ lifts to a functor
\[ D_{\nabla}: \mathrm{Vect}(X_\Prism, \mathcal{O}_\Prism\langle \mathcal{I}_\Prism/p \rangle[1/p]) \to \mathrm{Vect}^{\nabla,\log}(\mathcal{S}).\]
\end{corollary}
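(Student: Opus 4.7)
The plan is to exhibit the log connection as the first-order infinitesimal part of the prismatic descent data for $\mathcal{E}$. By Proposition~\ref{VBDescentPrism} together with the fact that $(\mathfrak{S},(E(u))) \in X_\Prism$ covers the final object (Example~\ref{BKAinfPrismOK}(1)), an object $\mathcal{E} \in \mathrm{Vect}(X_\Prism, \mathcal{O}_\Prism\langle \mathcal{I}_\Prism/p \rangle[1/p])$ is determined by a finite projective module $M := \mathcal{E}(\mathfrak{S})$ over $\mathfrak{S}\langle I/p\rangle[1/p]$ together with a descent isomorphism $\alpha : p_1^\ast M \simeq p_2^\ast M$ over $\mathfrak{S}^{(1)}\langle I/p\rangle[1/p]$ satisfying the cocycle condition. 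The first step is to push this descent data along the map of cosimplicial rings
\[ \mathfrak{S}^{(\bullet)}\langle I/p\rangle[1/p] \to \mathcal{S}^{(\bullet)}/(J_{\log}^{(\bullet)})^2 \]
supplied by Lemma~\ref{PrismaticLogCech}, obtaining an isomorphism $\bar\alpha : p_1^\ast M_{\mathcal{S}} \simeq p_2^\ast M_{\mathcal{S}}$ over $\mathcal{S}^{(1)}/(J_{\log}^{(1)})^2$, where $M_{\mathcal{S}} := M \otimes_{\mathfrak{S}\langle I/p\rangle[1/p]} \mathcal{S}$.

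The second step is the standard dictionary between first-order infinitesimal descent and connections. Since $\mathcal{S}^{(1)}/(J_{\log}^{(1)})^2$ is a square-zero extension of $\mathcal{S}$ by the free rank-one $\mathcal{S}$-module $J_{\log}^{(1)}/(J_{\log}^{(1)})^2 \simeq \Omega^1_{\mathcal{S},\log}$, compatibility of $\bar\alpha$ with the augmentation down to $\mathcal{S}$ (which is automatic from the cosimplicial identities satisfied by $\alpha$) allows us to write
\[ \bar\alpha(p_1^\ast m) = p_2^\ast m + \nabla(m), \qquad m \in M_{\mathcal{S}}, \]
for some additive map $\nabla : M_{\mathcal{S}} \to M_{\mathcal{S}} \otimes_{\mathcal{S}} \Omega^1_{\mathcal{S},\log}$. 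The $\mathcal{S}^{(1)}/(J_{\log}^{(1)})^2$-linearity of $\bar\alpha$, combined with the vanishing of the product $J_{\log}^{(1)} \cdot \bigl(J_{\log}^{(1)}/(J_{\log}^{(1)})^2\bigr)$, forces $\nabla$ to satisfy the Leibniz rule with respect to the universal log derivation $d : \mathcal{S} \to \Omega^1_{\mathcal{S},\log}$. Unwinding the identification $\Omega^1_{\mathcal{S},\log} \simeq \mathcal{S} \cdot \frac{du}{E(u)}$ then yields the operator $N_\nabla = E(u) \frac{d}{du}$ appearing in the description of $\mathrm{Vect}^{\nabla,\log}(\mathcal{S})$.

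Functoriality in $\mathcal{E}$ is evident from the construction, producing the desired functor $D_\nabla$. Note that since $\Omega^1_{\mathcal{S},\log}$ has rank one over $\mathcal{S}$, no integrability condition needs to be separately verified. The substantive content of this corollary is entirely contained in Lemma~\ref{PrismaticLogCech}, whose proof required the boundedness estimates from Joyal's operations; granting that lemma, the remaining argument is the formal equivalence between first-order log infinitesimal descent data and log connections.
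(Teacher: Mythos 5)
Your argument follows the paper's approach: push the prismatic descent data for $\mathcal E$ along the map of Lemma~\ref{PrismaticLogCech}, then invoke the standard dictionary between first-order log-infinitesimal descent data and log connections (your explicit unwinding via $\bar\alpha(p_1^\ast m) = p_2^\ast m + \nabla(m)$ is a fine elaboration of the paper's appeal to ``general nonsense on log infinitesimal cohomology''). One small imprecision worth fixing: Proposition~\ref{VBDescentPrism} as stated covers $\mathcal O_\Prism$, $\mathcal O_\Prism[1/p]^{\wedge}_{\mathcal I_\Prism}$, and $\mathcal O_\Prism[1/\mathcal I_\Prism]^{\wedge}_p$, but not $\mathcal O_\Prism\langle\mathcal I_\Prism/p\rangle[1/p]$; so your opening reduction of a crystal to a module $M$ with descent isomorphism $\alpha$ over $\mathfrak S^{(1)}\langle I/p\rangle[1/p]$ is not literally justified by citing that proposition. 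The paper devotes a short paragraph to this, noting that for a $(p,I)$-completely faithfully flat map of prisms $A\to B$ with $(p,I)$-completed Cech nerve $B^\bullet$, the cosimplicial ring $B^\bullet\langle I/p\rangle$ is the $p$-completed Cech nerve of the $p$-completely flat map $A\langle I/p\rangle\to B\langle I/p\rangle$, whence Theorem~\ref{DMDescent} yields the needed descent equivalence $\lim\mathrm{Vect}(\mathfrak S^{(\bullet)}\langle I/p\rangle[1/p])\simeq\mathrm{Vect}(X_\Prism,\mathcal O_\Prism\langle\mathcal I_\Prism/p\rangle[1/p])$. Adding that observation closes the gap; the rest of your argument matches the paper.
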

\begin{proof}
Mimicking the argument in Proposition~\ref{VBDescentPrism}, one first checks that $\mathcal{O}_\Prism \langle \mathcal{I}_\Prism/p \rangle[1/p]$-vector bundles can be described explicitly: the natural map gives an equivalence
\[ \lim \mathrm{Vect}(\mathfrak{S}^{(\bullet)}\langle I/p \rangle[1/p]) \simeq \mathrm{Vect}(X_\Prism, \mathcal{O}_\Prism\langle \mathcal{I}_\Prism/p \rangle[1/p]).\]
Indeed, this reduces to the following observation: if  $A \to B$ is a $(p,I)$-completely faithfully flat map of prisms with $(p,I)$-completed Cech nerve $B^\bullet$, then $B^\bullet \langle I/p \rangle$ is the $p$-completed Cech nerve of the $p$-completely flat ring map $A\langle I/p \rangle \to B\langle I/p \rangle$, and thus we have the descent equivalence 
\[ \mathrm{Vect}(A\langle I/p \rangle[1/p]) \simeq \lim \mathrm{Vect}(B^\bullet \langle I/p \rangle[1/p])\] 
by Theorem~\ref{DMDescent}.

Fix a crystal $\mathcal{E} \in \mathrm{Vect}(X_\Prism, \mathcal{O}_\Prism\langle\mathcal{I}_\Prism/p\rangle)$. By the previous paragraph, this crystal has a value $\mathfrak{M} = \mathcal{E}(\mathfrak{S}) \in \mathrm{Vect}(\mathfrak{S}\langle I/p \rangle[1/p])$. We claim that $\mathfrak{M}_{\mathcal{S}} := \mathfrak{M} \otimes_{\mathfrak{S}} \mathcal{S} \in \mathrm{Vect}(\mathcal{S})$ naturally carries a log connection. By general nonsense on log infinitesimal cohomology, the data of a log connection on $\mathfrak{M}_{\mathcal{S}} \in \mathrm{Vect}(\mathcal{S})$ is exactly a lift of this object to $\lim \mathrm{Vect}(\mathcal{S}^{(\bullet)}_{\log}/(J^{(\bullet)}_{\log})^2)$; but such a lift is provided by base changing $\mathcal{E}(\mathfrak{S}^{(\bullet)}) \in \lim \mathrm{Vect}(\mathfrak{S}^{(\bullet)}\langle I/p \rangle[1/p])$ along the map from Lemma~\ref{PrismaticLogCech}, so the claim follows.
\end{proof}


\bibliographystyle{amsalpha}
\bibliography{PrismaticCrystals}

\end{document}